\long\def\forget#1{}
\newcounter{commentcounter}
\newcommand{\comment}[1]{\stepcounter{commentcounter}{\color{red}\textbf{Comment \arabic{commentcounter}.} #1}
\immediate\write16{}
\immediate\write16{Warning: There was still a comment . . . }
\immediate\write16{}}
\newcounter{urscommentcounter}
\def\?{\ 
{\bf\color{red}???}\ 
\immediate\write16{}
\immediate\write16{Warning: There was still a question mark . . . }
\immediate\write16{}}
\theoremstyle{plain}
\newtheorem{theorem}{Theorem}[section]
\newtheorem{lemma}[theorem]{Lemma}
\newtheorem{corollary}[theorem]{Corollary}
\newtheorem{proposition}[theorem]{Proposition}
\theoremstyle{definition}
\newtheorem{definition}[theorem]{Definition}
\newtheorem{definition-theorem}[theorem]{Definition-Theorem}
\newtheorem{definition-remark}[theorem]{Definition-Remark}
\newtheorem{remark}[theorem]{Remark}
\theoremstyle{remark}
\newcounter{zahl}
\def\theenumi{(\alph{enumi})}
\def\p@enumii{\theenumi}
\newcommand{\DS}{\displaystyle}
\newcommand{\SC}{\scriptstyle}
\newcommand{\cM}{\mathcal{M}}
\newcommand{\cO}{\mathcal{O}}
\DeclareMathOperator{\Aut}{Aut}
\DeclareMathOperator{\End}{End}
\DeclareMathOperator{\Frob}{Frob}
\DeclareMathOperator{\Gal}{Gal}
\DeclareMathOperator{\GL}{GL}
\DeclareMathOperator{\Koh}{H}
\DeclareMathOperator{\Hom}{Hom}
\DeclareMathOperator{\QEnd}{QEnd}
\DeclareMathOperator{\QHom}{QHom}
\DeclareMathOperator{\Quot}{Frac}
\DeclareMathOperator{\Rep}{Rep}
\DeclareMathOperator{\SL}{SL}
\DeclareMathOperator{\Spec}{Spec}
\DeclareMathOperator{\Spf}{Spf}
\newcommand{\alg}{{\rm alg}}
\DeclareMathOperator{\coker}{coker}
\newcommand{\fppf}{{\it fppf\/}}
\newcommand{\fpqc}{{\it fpqc\/}}
\DeclareMathOperator{\id}{\,id}
\DeclareMathOperator{\im}{im}
\DeclareMathOperator{\coim}{coim}
\renewcommand{\mod}{{\rm\,mod\,}}
\DeclareMathOperator{\Frac}{Frac}
\DeclareMathOperator{\rank}{rank}
\DeclareMathOperator{\rk}{rk}
\newcommand{\sep}{{\rm sep}}
\renewcommand{\phi}{\varphi}
\renewcommand{\epsilon}{\varepsilon}
\newcommand{\BOne} {{\mathchoice{\hbox{\rm1\kern-2.7pt l\kern.9pt}}
                              {\hbox{\rm1\kern-2.7pt l\kern.9pt}}
                              {\hbox{\scriptsize\rm1\kern-2.3pt l\kern.4pt}}
                              {\hbox{\scriptsize\rm1\kern-2.4pt l\kern.5pt}}}}
\newcommand{\BA}{{\mathbb{A}}}
\newcommand{\BC}{{\mathbb{C}}}
\newcommand{\BD}{{\mathbb{D}}}
\newcommand{\BF}{{\mathbb{F}}}
\newcommand{\BG}{{\mathbb{G}}}
\newcommand{\BL}{{\mathbb{L}}}
\newcommand{\BN}{{\mathbb{N}}}
\newcommand{\BP}{{\mathbb{P}}}
\newcommand{\BQ}{{\mathbb{Q}}}
\newcommand{\BZ}{{\mathbb{Z}}}
\newcommand{\CA}{{\cal{A}}}
\newcommand{\CC}{{\cal{C}}}
\newcommand{\CF}{{\cal{F}}}
\newcommand{\CG}{{\cal{G}}}
\newcommand{\CH}{{\cal{H}}}
\newcommand{\CI}{{\cal{I}}}
\newcommand{\CM}{{\cal{M}}}
\newcommand{\CN}{{\cal{N}}}
\newcommand{\CO}{{\cal{O}}}
\newcommand{\CT}{{\cal{T}}}
\newcommand{\CV}{{\cal{V}}}
\newcommand{\CZ}{{\cal{Z}}}
\newcommand{\FG}{{\mathfrak{G}}}
\newcommand{\FJ}{{\mathfrak{J}}}
\newcommand{\FP}{{\mathfrak{P}}}
\newcommand{\Fa}{{\mathfrak{a}}}
\newcommand{\scrH}{{\mathscr{H}}}
\newcommand{\mot}{{\cM ot_C^{\ul \nu}}}
\let\setminus\smallsetminus
\newcommand{\ul}[1]{{\underline{#1}}}
\newcommand{\ol}[1]{{\overline{#1}}}
\newcommand{\wh}[1]{{\widehat{#1}}}
\newcommand{\invlim}[1][]{\ifthenelse{\equal{#1}{}}
{\DS \lim_{\longleftarrow}}
{\DS \lim_{\underset{#1}{\longleftarrow}}}
}
\newcommand{\dirlim}[1][]{\ifthenelse{\equal{#1}{}}
{\DS \lim_{\longrightarrow}}
{\DS \lim_{\underset{#1}{\longrightarrow}}}
}
\newcommand{\dbl}{{\mathchoice{\mbox{\rm [\hspace{-0.15em}[}}
                              {\mbox{\rm [\hspace{-0.15em}[}}
                              {\mbox{\scriptsize\rm [\hspace{-0.15em}[}}
                              {\mbox{\tiny\rm [\hspace{-0.15em}[}}}}
\newcommand{\dbr}{{\mathchoice{\mbox{\rm ]\hspace{-0.15em}]}}
                              {\mbox{\rm ]\hspace{-0.15em}]}}
                              {\mbox{\scriptsize\rm ]\hspace{-0.15em}]}}
                              {\mbox{\tiny\rm ]\hspace{-0.15em}]}}}}
\newcommand{\dpl}{{\mathchoice{\mbox{\rm (\hspace{-0.15em}(}}
                              {\mbox{\rm (\hspace{-0.15em}(}}
                              {\mbox{\scriptsize\rm (\hspace{-0.15em}(}}
                              {\mbox{\tiny\rm (\hspace{-0.15em}(}}}}
\newcommand{\dpr}{{\mathchoice{\mbox{\rm )\hspace{-0.15em})}}
                              {\mbox{\rm )\hspace{-0.15em})}}
                              {\mbox{\scriptsize\rm )\hspace{-0.15em})}}
                              {\mbox{\tiny\rm )\hspace{-0.15em})}}}}
\newcommand{\dotBD}{\vbox{\hbox{\kern2pt\bf.}\vskip-4.5pt\hbox{$\BD$}}}
\DeclareMathOperator{\Nilp}{\CN \!{\it ilp}}
\def\ulM{{\underline{M\!}\,}{}}
\def\s{\sigma^\ast}
\def\longto{\longrightarrow}
\def\isoto{\stackrel{}{\mbox{\hspace{1mm}\raisebox{+1.4mm}{$\SC\sim$}\hspace{-3.5mm}$\longrightarrow$}}}
\newbox\mybox
\def\arrover#1{\mathrel{
       \setbox\mybox=\hbox spread 1.4em{\hfil$\scriptstyle#1$\hfil}
       \vbox{\offinterlineskip\copy\mybox
             \hbox to\wd\mybox{\rightarrowfill}}}}
\newcommand{\BaseOfD}{\BF}
\newcommand{\genericG}{P}
\newcommand{\Vect}{V\!ect}
\newcommand{\tauGlob}{\tau}
\newcommand{\charsect}{s}
\begin{document}


\date{\today}

\title{Category Of $C$-Motives\\ Over
Finite Fields}

\author{Esmail Arasteh Rad and Urs Hartl}

\maketitle

\begin{abstract}

In this article we introduce and study a motivic category in the arithmetic of function fields, namely the category of motives over an algebraic closure $L$ of a finite field with coefficients in a global function field over this finite field. It is semi-simple, non-neutral Tannakian and possesses all the expected fiber functors. This category generalizes the previous construction due to Anderson and is more relevant for applications to the theory of $G$-Shtukas, such as formulating the analog of the Langlands-Rapoport conjecture over function fields. We further develop the analogy with the category of motives over $L$ with coefficients in $\BQ$ for which the existence of the expected fiber functors depends on famous unproven conjectures.

\noindent
{\it Mathematics Subject Classification (2000)\/}: 
11G09  
\end{abstract}


%
%

\section{Introduction}

Let $\BF_q$ be a finite field with $q$ elements, let $C$ be a smooth, projective, geometrically connected curve over $\BF_q$ with function field $Q=\BF_q(C)$, and let $\FG$ be a flat affine group scheme of finite type over $C$. The moduli stacks of global $\FG$-shtukas appear as function field analogs for Shimura varieties; for some explanation regarding this analogy see for example \cite[Chapter 3]{Ara}.
There are several approaches to study the geometry of these moduli stacks. In \cite{AH_Local}, \cite[Chapter 4]{AH_LM} and \cite{AH_Global} the authors developed an approach which is based on the relation between these moduli stacks and certain moduli spaces for local $\BP$-shtukas. Namely, in \cite{AH_Local} we developed the local theory of global $\FG$-shtukas. In particular, for smooth group scheme $\FG$ over $C$ with reductive connected generic fiber, we constructed Rapoport-Zink spaces for local $\BP$-shtukas, where $\BP$ is the base change of $\FG$ to the complete local ring at a point of $C$. This construction generalizes the previous construction in \cite{H-V} for constant groups $\FG=G\times_{\BF_q} \Spec \BF_q\dbl z\dbr$, where $G$ is split reductive over $\BF_q$, to the ramified case. Furthermore in \cite[Chapter 4]{AH_LM} the authors use the deformation theory of local $\BP$-shtukas, investigated in \cite{AH_Local}, to establish the analogue of Rapoport-Zink theory of local models for moduli of $\FG$-shtukas. On the other hand, using Rapoport-Zink spaces for local $\BP$-shtukas we established the uniformization theory of the moduli stacks of global $\FG$-shtukas in \cite{AH_Global}. Based on these results we investigate the analog of the Langlands-Rapoport conjecture over function fields in \cite{AH_LR}. To this aim, as we will discuss in this article, one has to introduce and study the properties, of a relevant motivic category $\mot(L)$ over a finite field extension $L$ of $\BF_q$, as well as the associated fundamental motivic groupoid. Here $\ul\nu$ denotes an n-tuple $(\nu_1,\ldots,\nu_n)$ of pairwise different closed points of $C$. In this context, our initial motivation for studying this category was the fact that it provides a tool to describe the quasi-isogeny classes of $\FG$-shtukas in terms of the representations of the corresponding motivic groupoid.\\ 
Apart from what we mentioned above, it seems to us that the category $\mot(L)$ deserves to be studied for its own as well. Note that it basically arises as a natural generalization of previous constructions of the category of motives over function fields considered by various authors; for example see \cite{Anderson}, \cite{TaelmanI} and \cite{Bor-Har1}. In addition let us mention that in another research project we aim to clarify the possible relation with the Galois gerbe introduced by Kottwiz in \cite{Kot}.\\

Let us now briefly explain how the category $\mot(L)$ naturally arises in the context of motives over function fields. In \cite{TaelmanI}, Taelman proposes several categories that serve the analogous role (of the Grothendieck category of Chow motives) over function fields. To this goal, he first considers the Anderson category of $A$-motives \cite{Anderson}, for $A=\BF_q\dbl z\dbr$, then after extending the coefficients to rational coefficients (by tensoring up the Hom-sets with $Q=\Frac(A)$), he formally inverts (tensor powers of) the Carlitz motive $\CC$.
The resulting category $t\CM^\circ$ together with the obvious fiber functor $\omega: t\CM^\circ\to Q-vector~spaces$ provides a tannakian category which is a candidate for the analogous motivic category over function fields. Still one may naturally want
\begin{enumerate}

\item[-]
to consider motives which admit  multiplications by a Dedekind domain which is strictly bigger than $\BF_q[z]$,  
\item[-]
to construct a category analogous to the category (mixed) motives over a general base, and
\item[-] to geometrize this category. More precisely, one may think of Shimura varieties as a moduli for motives according to the Deligne's conception of Shimura varieties \cite{Deligne2}. But the above category of motives over function fields do not behave well concerning moduli problems. This is for example because there are too much freedom at infinity of $\Spec A$.

\end{enumerate}

To handle the above issues,  one makes Anderson $A$-motives completed at the place infinity $\infty$ of a curve $C$, with $A:=\Gamma(\dot{C},\CO_C)$, in the following sense. Namely, we replace the (locally) free $A_L$-module $M$ by a locally free sheaf $\CM$ of $\CO_{C_L}$-modules (or equivalently a vector bundle over $C_L$).  Note that one should be a bit careful with extending $\tau$ over infinity. Recall that, according to the definition of $A$-motives, one requires the morphism $\tau$ to have it's zeros along $V(J)$, where $J$ is the ideal corresponding to the graph of a (characteristic) section $s_1: S:=\Spec L\to \dot{C}=\Spec A$. This in particular indicates that the morphism $\tau$ can not be defined over the whole (relative) curve $C_S$. This is because over a projective curve, the balance between order of zero's and poles of $\tau$ should be preserved. Therefore to provide an appropriate definition, one should allow further characteristic section(s) $s_i$. Note that aside from making the definition more efficient, in fact, introducing several characteristics turns out to be an extremely useful tool in function fields set up, which is still absent over number fields. Note in addition that introducing further characteristic sections corresponds to inverting several Carlitz-Hayes motives, see \cite{HJ},  in Taelman's construction. Regarding this, one eventually comes to the Definition \ref{DefCatCMotives} of the $Q$-linear category $\mot(L)$  whose objects are $C$-motives $\ul\CM=(\CM,\tau_\CM)$ of characteristic $\ul\nu$, with quasi-morphisms as its morphisms; see Definition \ref{DefCatCMotives} b).\\

Note that sending a $C$-motive $(\CM,\tau_\CM)$ to the generic fiber $\CM_\eta$ equips the category with a fiber functor 
$$
\omega:\mot(L)\longto Q_L\text{-vector spaces}.
$$    

Let us now briefly explain the content of this article. In Section~\ref{SubSectRealizationFunctors} we construct on $\mot (L)$ \'etale realization functors $\omega^\nu_{Q_\nu}$ when $\nu\notin\ul\nu$ and crystalline realization functors $\Gamma_\nu$ when $\nu\in\ul\nu$. Let $\nu\notin\ul\nu$ be a closed point of $C$ away from the characcteristic places. For a $C$-motive $\ul\CM$ over $L$ the \'etale realization $\omega^\nu_{Q_\nu}(\ul\CM)$ is a vector space over the $v$-adic completion $Q_v$ of $Q$ equiped with a continuous action of the Galois group $\Gamma_L=\Gal(\ol\BF_q/L)$. This allows to formulate the analog of the Tate conjecture for this category:

\begin{theorem}
Let $\ul\CM$ and $\ul\CM'$ be $C$-motives over a finite field $L$. Then there are isomorphisms

$$
\QHom_L(\ul\CM,\ul\CM')\otimes_Q Q_\nu \tilde{\longrightarrow} \Hom_{{Q_\nu}[\Gamma_L]}(\omega^\nu_{Q_\nu}(\ul\CM),\omega^\nu_{Q_\nu}(\ul\CM')) 
$$

of $Q_\nu$-vector spaces. Moreover if $\ul\CM=\ul\CM'$ then the above are ring isomorphisms. 
\end{theorem}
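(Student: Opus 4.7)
The plan is to follow the classical Faltings--Tate strategy in the form developed by Anderson, Taguchi and Tamagawa for Drinfeld modules and Anderson $A$-motives, adapted to the present setting. A first standard reduction: since $\omega^\nu_{Q_\nu}$ commutes with direct sums and $\QHom_L(\ul\CM,\ul\CM')$ appears as an off-diagonal block in $\QEnd_L(\ul\CM \oplus \ul\CM')$, it suffices to treat the case $\ul\CM = \ul\CM'$. This reduction simultaneously yields the ``moreover'' clause, since the block decomposition is compatible with composition on both sides.

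Injectivity after tensoring with $Q_\nu$ should be essentially formal: $\mot(L)$ is semisimple and non-neutral Tannakian (as asserted in the abstract), so the fiber functor $\omega^\nu_{Q_\nu}$ is faithful. Combined with the finite-dimensionality of $\QEnd_L(\ul\CM)$ over $Q$ (which one can bound via the generic fiber functor $\omega$), flat base change to $Q_\nu$ preserves injectivity.

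For surjectivity I would exploit that $\Gamma_L = \Gal(\ol{\BF_q}/L)$ is topologically generated by the geometric Frobenius $\Frob_L$, so that the right-hand side is simply the centralizer of $\Frob_L$ in $\End_{Q_\nu}\!\bigl(\omega^\nu_{Q_\nu}(\ul\CM)\bigr)$. Because $\ul\CM$ is defined over the finite field $L$, the action of $\Frob_L$ on the étale realization is induced by the Frobenius isogeny, viewed as an element $\pi \in \QEnd_L(\ul\CM)$. The strategy is then: (i) prove that $\pi$ acts semisimply on $\omega^\nu_{Q_\nu}(\ul\CM)$ with characteristic polynomial in $Q[T]$ independent of $\nu$; (ii) use semisimplicity of $\mot(L)$ to decompose $\ul\CM$ into $\pi$-isotypic pieces, each of which realizes to the corresponding $\Frob_L$-isotypic piece on the étale side; (iii) on each isotypic piece apply Schur's lemma together with a dimension count to identify the commutant of $\Frob_L$ with $\QEnd_L$ of the piece, base-changed to $Q_\nu$.

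The main obstacle will be step~(i): the rationality, $\nu$-independence and semisimplicity of the Frobenius characteristic polynomial on the étale realization. This is the analog for $\mot(L)$ of the Weil/Honda--Tate-type results for $A$-motives, and must be extended here to accommodate the general curve $C$ and the several characteristic points $\ul\nu$. The natural input is the crystalline realization $\Gamma_\nu$ at some $\nu \in \ul\nu$, whose Frobenius characteristic polynomial lies in $Q[T]$ and agrees, by a standard comparison across places, with that of each étale $\omega^{\nu'}_{Q_{\nu'}}$ for $\nu' \notin \ul\nu$. Once step~(i) is secured, steps~(ii) and~(iii) are formal consequences of the Tannakian/semisimple structure of $\mot(L)$.
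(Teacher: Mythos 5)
Your strategy has a genuine gap that the paper itself flags. Steps~(i)--(iii) of your surjectivity argument rest on the assumption that the Frobenius $\pi$ acts semisimply on $\omega^\nu_{Q_\nu}(\ul\CM)$, and that $\mot(L)$ is semisimple so that $\ul\CM$ decomposes into $\pi$-isotypic pieces. But the paper explicitly states (in the discussion following Theorem~\ref{ThmPoincare-Weil}) that semisimplicity of $\mot(L)$ holds \emph{only} for $L=\ol{\BF}_q$ and fails for a finite field $L$; and Theorem~\ref{Thm_SemisimplicityQEnd} shows that semisimplicity of $\pi_\nu$ is \emph{equivalent} to semisimplicity of $\ul\CM$, so it is not automatic. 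Unlike the number-field case, where Weil's theorem gives semisimplicity of Frobenius on $H^1_{\text{\'et}}$ of an abelian variety for free, here that hypothesis is a nontrivial constraint on $\ul\CM$ and cannot be assumed. Your Schur/isotypic/dimension-count machinery therefore breaks down exactly where you located the difficulty. Also, the logic would be circular: in the paper, Theorem~\ref{Thm_SemisimplicityQEnd} (the equivalence of semisimplicity conditions) is a \emph{consequence} of Theorem~\ref{Thm_Tate_Conj}, not an input to it.

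The paper's actual proof is considerably more elementary and avoids semisimplicity entirely. After reducing to $\BF_\nu=\BF_q$ via Proposition~\ref{Prop_RedisIsom}, one writes $\Hom_L(\ul\CM,\ul\CM')$ as the kernel of the map $f\mapsto f\circ\tau_\CM-\tau_{\CM'}\circ f$ on $\Hom_{A_L}(\CM,\CM')$, tensors this exact sequence with $A_\nu$ (flat over $A$), and uses the crucial identity $A_L\otimes_A A_\nu = A_{\nu,L}$ (valid because $L$ is finite, so that $A_L$ is finite over $A$ and the tensor product is already $\nu$-adically complete) to identify the result with the analogous kernel computing $\Hom$ of local crystals. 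The proof then concludes by Theorem~\ref{Thm_EmbeddingOfEtCrystintoGalMods}~(b), i.e.\ full faithfulness of the \'etale realization functor on \'etale $\hat\sigma_\nu$-crystals. No Honda--Tate, no rationality of characteristic polynomials, no isotypic decomposition is needed. If you want to pursue your route, you would first have to restrict to the full subcategory of semisimple objects or work over $\ol{\BF}_q$, which would not give the theorem as stated.
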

\noindent
This appears as Theorem \ref{Thm_Tate_Conj} in the text. \\ 

Our method for studying this category is  almost elementary and similar to the number fields set up. Namely, by looking at the endomorphism algebra associated with the objects, e.g. we observe that

\begin{theorem}\label{Thm_SemisimplicityQEnd}

Let $L/\BF_q$ be a finite field extension. Let $\ul\CM$ be a $C$-motive over $L$ and let $\pi\in\QEnd_L(\ul\CM)$ be its $L$-Frobenius. Let $\nu\notin\ul\nu$ be a closed point of $C$ away from the characcteristic places. The following statements are equivalent

\begin{enumerate}

\item
$\ul\CM\in\mot(L)$ is semi-simple. 

\item
$E:=\QEnd_L(\ul\CM)$ is semi-simple. 

\item

$\End_{Q_\nu[\Gamma_L]}(\omega_{Q_\nu}^\nu(\ul\CM))$ is semisimple. 

\item

$F_\nu:=Q_\nu[\pi_\nu]$ is semisimple, where $\pi_\nu=\omega_{Q_\nu}^\nu(\pi)\in \End_{Q_\nu}(\omega_{Q_\nu}^\nu(\ul\CM))$. 

\item

$F:=Q[\pi]$ is semi-simple.


\end{enumerate}

\end{theorem}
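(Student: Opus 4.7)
The plan is to establish the chain of equivalences (a)$\Leftrightarrow$(b)$\Leftrightarrow$(c)$\Leftrightarrow$(d)$\Leftrightarrow$(e), each link drawing on a distinct tool. The first link (a)$\Leftrightarrow$(b) is purely formal for a $Q$-linear pseudo-abelian category with finite-dimensional Hom spaces: if $\ul\CM\cong\bigoplus_i\ul\CM_i^{n_i}$ decomposes into pairwise non-isomorphic simple objects, then Schur's lemma yields $E\cong\prod_i M_{n_i}\bigl(\QEnd_L(\ul\CM_i)\bigr)$, a product of matrix algebras over division algebras over $Q$, hence semi-simple. Conversely, given a semi-simple $E$, the central idempotents are honest idempotents in the endomorphism ring and cut out an orthogonal direct-sum decomposition of $\ul\CM$; this process can be iterated to produce simple summands, using that $E$ has finite length over itself.

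For (b)$\Leftrightarrow$(c), I would invoke Theorem~\ref{Thm_Tate_Conj} with $\ul\CM=\ul\CM'$, which furnishes a ring isomorphism $E\otimes_Q Q_\nu\isoto\End_{Q_\nu[\Gamma_L]}(\omega^\nu_{Q_\nu}(\ul\CM))$. For a finite-dimensional algebra over a field, semi-simplicity amounts to vanishing of the (nilpotent) Jacobson radical, and since $Q\to Q_\nu$ is faithfully flat, nilpotent two-sided ideals correspond in both directions, giving (b)$\Leftrightarrow$(c). For (c)$\Leftrightarrow$(d), note that $\Gamma_L\cong\wh\BZ$ is topologically generated by the Frobenius, so by continuity the $Q_\nu[\Gamma_L]$-action on $V:=\omega^\nu_{Q_\nu}(\ul\CM)$ factors through the finite-dimensional subalgebra $F_\nu=Q_\nu[\pi_\nu]\subseteq\End_{Q_\nu}(V)$; hence $\End_{Q_\nu[\Gamma_L]}(V)=\End_{F_\nu}(V)$ is precisely the commutant of $F_\nu$. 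The double commutant theorem, applied to the single operator $\pi_\nu$ on the finite-dimensional vector space $V$, then says that $F_\nu$ is semi-simple iff $\pi_\nu$ acts semi-simply on $V$ iff this commutant is semi-simple.

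The final link (d)$\Leftrightarrow$(e) rests on the identification $F_\nu\cong F\otimes_Q Q_\nu$, which follows from faithfulness of $\omega^\nu_{Q_\nu}$: $\pi$ and $\pi_\nu$ must satisfy the same minimal polynomial over $Q$. The implication (d)$\Rightarrow$(e) is then immediate, since $F$ embeds into the reduced ring $F_\nu$ and subrings of reduced commutative rings are reduced. The main obstacle will be the converse (e)$\Rightarrow$(d): in characteristic $p$, a squarefree polynomial in $Q[x]$ can acquire repeated factors after base change to $Q_\nu$, so semi-simplicity of a finite commutative $Q$-algebra does not descend automatically. The essential input here must be that the minimal polynomial of the $L$-Frobenius $\pi$ is separable over $Q$ --- a function-field analog of the classical fact that Weil numbers of abelian varieties over finite fields are separable over $\BQ$. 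Once separability is in hand, each irreducible factor of the minimal polynomial remains squarefree over $Q_\nu$, so $F\otimes_Q Q_\nu$ inherits semi-simplicity from $F$ factor by factor, closing the chain.
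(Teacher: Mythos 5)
Your chain of implications reproduces the easy half of the theorem, but it has a serious gap precisely where the content of the statement lies: the implication (b)$\Rightarrow$(a). You claim this is ``purely formal'' by lifting central idempotents of $E$ to orthogonal direct-summand decompositions of $\ul\CM$. That reduction does let you pass to the case where $E=\QEnd_L(\ul\CM)$ is a division algebra, but from there you cannot conclude formally that $\ul\CM$ is simple: in a general $Q$-linear abelian category an object with a division algebra of endomorphisms need not be simple (e.g.\ the nontrivial self-extension of the trivial representation of $\BG_a$ has endomorphism ring equal to the base field). The point of the theorem, and of the analogous statement for abelian varieties, is that one needs a complete-reducibility input. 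The paper's proof supplies it by (i) establishing (d) from (b) first, so that $\omega^\nu_{Q_\nu}(\ul\CM)$ is a semi-simple $F_\nu$-module, (ii) using exactness of $\omega^\nu_{Q_\nu}$ (Proposition~\ref{Prop_RationalTateisexactII}) to turn a quotient $f:\ul\CM\twoheadrightarrow\ul\CM'$ into a surjection of Galois modules that admits a section $f'_\nu$, and (iii) invoking the Tate theorem~\ref{Thm_Tate_Conj} to approximate $z_\nu^n f'_\nu$ by an honest $g\in\Hom_L(\ul\CM',\ul\CM)$ with $g\circ f\ne 0$, whence $g\circ f\in E^\times$ and $f$ is injective. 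Your outline omits all of this, so the equivalence (a)$\Leftrightarrow$(b) is not actually proved.

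Two smaller points. For (b)$\Leftrightarrow$(c), faithful flatness only gives the descent direction (vanishing of the Jacobson radical of $E\otimes_Q Q_\nu$ forces it for $E$); for the ascent direction one needs that $Q_\nu/Q$ is a separable field extension, which is what the paper invokes via Bourbaki. For (d)$\Leftrightarrow$(e), the ``essential input'' you propose --- that the minimal polynomial of $\pi$ is separable over $Q$ --- is not what's used and is not asserted anywhere for general $C$-motives. What makes $F$ semi-simple $\Rightarrow F\otimes_Q Q_\nu$ semi-simple work is again the separability of the extension $Q_\nu/Q$ (a standard property of the completion of a global function field at a place), which ensures that tensoring a reduced finite $Q$-algebra with $Q_\nu$ keeps it reduced. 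Your worry about ``acquiring repeated factors after base change'' is answered by this separability of $Q_\nu/Q$, not by a separability hypothesis on $\mu_\pi$.
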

\noindent
This is Theorem \ref{Thm_SemisimplicityQEnd} in the text.\\


We study the endomorphism algebra $\QEnd_L(\ul\CM)$ further and in particular we prove proposition \ref{Proph|r} and Proposition \ref{Prop_ExtremeCases}.\\

As we mentioned earlier the notion of $C$-motives is relevant for $\FG$-Shtukas; we discuss this in Section \ref{SectGSht}. In particular we introduce the category of $G$-$C$-motives, see Definition \ref{DefGMotives}, and we explain its relation to the category of $\FG$-shtukas. In Proposition  \ref{Prop_GeomMixedareMixedOverAFiniteExtensionII} we prove a finiteness theorem for $G$-$C$-motives that arise from $\FG$-shtukas.\\
In section \ref{Subsec_Global-Local Functor} we recall the construction of the global-local functor and furthermore, as an additional analogy with the theory of abelian varieties, we see that for a fixed $C$-motive $\ul\CM$ in $\mot(L)$, one can express the set of quasi-isogenies $\phi:\ul\CM' \to \CM$ in terms of Galois stable sublattices in the \'etale realization of $\ul\CM$ at the place $\nu$. See Corollary \ref{Cor_Qiso&SL}.

\begin{proposition}
Let $\phi:\ul\CM' \to \ul\CM$ be a quasi-isogeny of $C$-motives in $\mot (L)$. Then $\omega_Q^\nu(\phi)$ identifies $\omega^\nu(\ul\CM')$ with a $\Gamma_L$-stable sublattice of $\omega_Q^\nu(\ul\CM)$. This gives a one to one correspondence between  the following sets

$$
\{ \text{quasi-isogenies $\ul\CM'\to\ul\CM$ in $\mot(L)$ which are isomorphisms above $\ul \nu$}\}  
$$

and

$$
\{\text{$\Gamma_L$-stable sublattice $\Lambda_\nu \subseteq \omega_Q^\nu(\ul\CM)$ which are contained in $\omega^\nu(\ul\CM)$} \}.
$$

\end{proposition}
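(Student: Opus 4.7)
The plan is to construct mutually inverse maps: the forward direction comes almost directly from applying the étale realization $\omega^\nu$, while the inverse direction uses the Tate equivalence at $\nu$ (\PropTateEquiv\ or \PropTateEquivP) to turn a sublattice into a local modification of $\ul\CM$ at $\nu$.

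For the forward map, given a quasi-isogeny $\phi:\ul\CM'\to\ul\CM$ which is an isomorphism above $\ul\nu$, I would first clear the denominators of $\phi$; since $\phi$ is already integral at $\ul\nu$ the required denominator is supported away from $\ul\nu$, and we may represent $\phi$ by an honest morphism of $C$-motives (after replacing $\ul\CM'$ by an isomorphic copy). The induced map $\omega^\nu(\phi)\colon\omega^\nu(\ul\CM')\to\omega^\nu(\ul\CM)$ is a morphism of finite free modules over the completion of the local ring at $\nu$, and $\omega^\nu_{Q_\nu}(\phi)$ is a $Q_\nu$-linear isomorphism because $\phi$ is a quasi-isogeny. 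Consequently the image $\Lambda_\nu:=\omega^\nu(\phi)(\omega^\nu(\ul\CM'))$ is a $\Gamma_L$-stable sublattice of $\omega^\nu(\ul\CM)$ which spans $\omega^\nu_{Q_\nu}(\ul\CM)$ over $Q_\nu$, and it is manifestly independent of the choice of source-isomorphism.

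For the inverse map, given a $\Gamma_L$-stable sublattice $\Lambda_\nu\subseteq\omega^\nu(\ul\CM)$, I would invoke the Tate equivalence at $\nu\notin\ul\nu$, which identifies local $C$-motives at $\nu$ with continuous $\Gamma_L$-modules over the completion at $\nu$. The inclusion $\Lambda_\nu\hookrightarrow\omega^\nu(\ul\CM)$ corresponds under this equivalence to a morphism $\wh{\ul\CM}{}'_\nu\to\wh{\ul\CM}{}_\nu$ of local $C$-motives at $\nu$. I would then Beauville--Laszlo glue the restriction of $\ul\CM$ to $C_L\setminus\{\nu\}$ together with $\wh{\ul\CM}{}'_\nu$ along the punctured formal disc at $\nu$, using the comparison isomorphism on the overlap that exists because both local pieces generically coincide. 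This produces a $C$-motive $\ul\CM'$ of characteristic $\ul\nu$ together with a morphism $\phi:\ul\CM'\to\ul\CM$ that is an isomorphism everywhere except (possibly) at $\nu$, hence in particular above $\ul\nu$, and thus a quasi-isogeny in $\mot(L)$.

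The main obstacle I expect is verifying that the glueing in the inverse construction indeed defines a $C$-motive of the same characteristic $\ul\nu$: one must check that the patched sheaf is locally free (which follows from faithfully flat descent given that both pieces are locally free and agree on the punctured disc) and that the Frobenius $\tau$ extends across $\nu$ with zero divisor still equal to the characteristic divisor. The latter holds because the modification takes place at $\nu\notin\ul\nu$, where $\tau_{\ul\CM}$ is already an isomorphism and where the Tate equivalence is Frobenius-equivariant, so no new zeros or poles of $\tau$ are introduced. Once these technical points are settled, bijectivity is formal: composing the inverse with the forward map returns $\Lambda_\nu$ by the construction of the Tate equivalence, and composing in the other order recovers $(\ul\CM',\phi)$ up to unique source-isomorphism because $\omega^\nu$ is faithful on local $C$-motives at $\nu$ while the rest of $\ul\CM'$ is rigidly identified with $\ul\CM$ outside $\nu$.
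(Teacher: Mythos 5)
Your proposal is correct and follows essentially the same route as the paper. The forward direction (apply $\omega^\nu$ to a quasi-isogeny to obtain a $\Gamma_L$-stable sublattice) is exactly what the paper does. For the inverse, the paper first passes from the sublattice $\Lambda_\nu\subseteq\omega^\nu(\ul\CM)$ to a quasi-isogeny $\hat f:\hat{\ul M}'\to\hat{\ul M}$ of \'etale crystals using the Galois-module/\'etale-crystal equivalence (Theorem \ref{Thm_EmbeddingOfEtCrystintoGalMods}, what you call the Tate equivalence), checking along the way that $\hat{\ul M}'=(\Lambda_\nu\otimes_{A_\nu}L^\sep\dbl z\dbr)^{\Gamma_L}$ is indeed an \'etale crystal; it then invokes Proposition \ref{Prop_Pullback}, whose proof is precisely the Beauville--Laszlo / fpqc-descent glueing of $\ul\CM|_{(C\setminus\{\nu\})_L}$ with the new local piece $\hat{\ul M}'$ over the punctured formal disc, producing $\ul\CM':=\hat f^*\ul\CM$ with its canonical quasi-isogeny to $\ul\CM$. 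Your observation that the characteristic divisor is unchanged because the modification happens only at $\nu\notin\ul\nu$ (where $\tau$ is already an isomorphism) is exactly how the paper's gluing step handles $\tau$. The only cosmetic difference is that the paper has already isolated the gluing as Proposition \ref{Prop_Pullback}, so its statement of the present corollary is short; your extra remark about clearing denominators in the forward direction is harmless but unnecessary, since the condition that $\Lambda_\nu$ lies inside $\omega^\nu(\ul\CM)$ already encodes integrality of $\phi$ at $\nu$.
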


Note that $\mot(L)$ consists of mixed motives
rather than pure ones, and to define the subcategory of pure objects one has to impose purity conditions. But nevertheless, as we will see in Section \ref{Sect_Quasi-isogeny classes and Honda-Tate theory}, a modified version of the Honda-Tate theory applies to this category\forget{; compare Remark \ref{Rem_Honda-Tate_Theory} and Theorem \ref{Thm_Jannsen}}. We introduce the analog  $W_\ul\nu$ of the Weil pro-torus, and we discuss the Honda-Tate theory in Section~\ref{Sect_Quasi-isogeny classes and Honda-Tate theory}; see Propositions \ref{PropQIsoClassesI} and \ref{PropQIsoClassesII}, in which we obtain a \emph{quasi-isogeny criterion} for $C$-motives, and Theorem \ref{Thm_HT_C-Mot}. We further observe that

\begin{theorem}\label{Thm_HT_C-Mot}
There is a bijection 
$$
\text{the set $\Sigma$ of simple objects in $\mot(\ol{\BF}_q)$}\enspace \longleftrightarrow \enspace \text{$\Gamma_Q\backslash W_\ul\nu\times \BN_{\geqslant 1}\slash\!\!\sim$}.
$$
\end{theorem}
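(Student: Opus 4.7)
The plan is to mirror the classical Honda--Tate theorem by reading off the Weil datum from the $L$-Frobenius. Given a simple $\ul\CM\in\mot(\ol\BF_q)$, Theorem~\ref{Thm_SemisimplicityQEnd} ensures that $\QEnd_{\ol\BF_q}(\ul\CM)$ is a finite-dimensional semisimple $Q$-algebra, so $\ul\CM$ descends to a model over some finite subfield $L=\BF_{q^n}\subset\ol\BF_q$. The $L$-Frobenius $\pi$ of this model generates a subfield $F=Q[\pi]\subset\QEnd_L(\ul\CM)$ whose local invariants at the places of $C$ assemble into a point $w_\pi\in W_\ul\nu$. I would assign $\ul\CM\mapsto[w_\pi,n]$ and verify well-definedness modulo (i)~the $\Gamma_Q$-action on $W_\ul\nu$, which reflects the choice of embedding $F\hookrightarrow\ol Q$, and (ii)~the equivalence on $W_\ul\nu\times\BN_{\geqslant 1}$ identifying $(w,n)$ with $(w^m,nm)$, corresponding to replacing the $\BF_{q^n}$-model by its base change to $\BF_{q^{nm}}$.

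For injectivity, suppose $\ul\CM_1,\ul\CM_2$ over a common $L$ give the same Weil class. After conjugation their $L$-Frobenii share the same characteristic polynomial. Theorem~\ref{Thm_SemisimplicityQEnd} then implies that each \'etale realization $\omega^\nu_{Q_\nu}(\ul\CM_i)$ is semisimple and that the two are isomorphic as $Q_\nu[\Gamma_L]$-modules, the $\Gamma_L$-action being topologically generated by $\pi$. The Tate conjecture (Theorem~\ref{Thm_Tate_Conj}) lifts this isomorphism to a nonzero element of $\QHom_L(\ul\CM_1,\ul\CM_2)\otimes_Q Q_\nu$, and combined with the quasi-isogeny criterion of Propositions~\ref{PropQIsoClassesI} and~\ref{PropQIsoClassesII}, this produces a quasi-isogeny $\ul\CM_1\to\ul\CM_2$ in $\mot(\ol\BF_q)$.

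The main obstacle is surjectivity, the analog of Honda's step. Given a class $[w,n]$, I would construct a simple $C$-motive realizing it via the function-field CM recipe: set $F=Q[w]$ and build a rank-$[F:Q]$ $C$-motive with multiplication by the maximal order $\CO_F$ and characteristic $\ul\nu$. The construction proceeds locally --- at the places of $F$ above $\ul\nu$ and $\infty$ one invokes the classification of local $\BP$-shtukas from~\cite{AH_Local} to match the Newton invariants dictated by $w$, while away from $\ul\nu$ and $\infty$ the datum of a Galois-stable lattice with $\pi$-action handles the \'etale side --- and then these local pieces must be glued into a global object. The hard part will be matching the Brauer invariants: one must check that the local invariants recorded by $w\in W_\ul\nu$ are precisely those realizable as endomorphism-algebra invariants of some $C$-motive, the function-field counterpart of the compatibility that Honda establishes for abelian varieties. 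Once this matching is confirmed, simplicity of the resulting object follows because $w$ generates $F$ as a $Q$-algebra, forcing the endomorphism algebra of the constructed motive to be a central division algebra over $F$, hence the motive is simple in $\mot(\ol\BF_q)$.
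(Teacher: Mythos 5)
Your proposal follows essentially the same route as the paper: descend $\ul\CM$ to a model over a finite field $L=\BF_{q^n}$, assign to it the $\Gamma_Q$-orbit of a root of $\mu_\pi$ together with $n$, prove injectivity via the quasi-isogeny criterion (Propositions~\ref{PropQIsoClassesI} and~\ref{PropQIsoClassesII}), and construct a motive for each Weil class to get surjectivity. Be aware, though, that the paper does not itself prove surjectivity; it delegates that entirely to \cite[Theorem~3.12]{FelixThesis}. Your surjectivity paragraph is likewise only a sketch, and it explicitly defers the key matching of local (Brauer) invariants, so both your proposal and the paper leave the onto direction to Röting's thesis and are at a comparable level of rigor there.

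There is one small gap in the injectivity step you should close. You assert that, after conjugation, the two $L$-Frobenii have the same \emph{characteristic} polynomial. But the Weil class only determines the common \emph{minimal} polynomial $\mu_\pi$; by Proposition~\ref{Proph|r} the characteristic polynomial of a simple object is $\chi_\nu=\mu_\pi^{r/h}$ with $h=[F:Q]$, and this also depends on the rank $r$, which is not a priori encoded in the Weil datum. So the two simple motives $\ul\CM_1,\ul\CM_2$ could in principle have different ranks $r_1\le r_2$. The fix is short: then $\chi_{\nu,1}$ divides $\chi_{\nu,2}$, the Frobenii are semisimple (each $\QEnd_L(\ul\CM_i)$ is a division algebra, so Theorem~\ref{Thm_SemisimplicityQEnd} applies), and by Proposition~\ref{PropQIsoClassesI}(i) together with the implication $(b)\Rightarrow(a)$ there, $\ul\CM_1$ is quasi-isogenous to a quotient of $\ul\CM_2$; since $\ul\CM_2$ is simple, the only nonzero quotient is $\ul\CM_2$ itself, whence $\ul\CM_1\cong\ul\CM_2$ in $\mot(L)$ and $r_1=r_2$. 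With this patch your injectivity argument coincides with the paper's appeal to Proposition~\ref{PropQIsoClassesI}.
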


Note that the Honda-Tate theory of shtukas is also well studied in the PhD thesis of F.~R\"oting \cite{FelixThesis}. In the present article we give the proof for the fact that the map $\Sigma \rightarrow \Gamma_Q\backslash W_\ul\nu\times \BN_{\geqslant 1}\slash\!\!\sim$ is injective. This proof also appears in \cite{FelixThesis}, while the main achievement in \cite{FelixThesis} is the fact that this map is also surjective. We mention this in the proof of the above theorem.\\

\noindent
We also discuss the zeta-functions associated with $C$-motives and $\FG$-shtukas in section \ref{Section_Zeta-Function and analogous motives}. \\

The Semi-simplicity of the category of $C$-motives over $\ol\BF_q$ is proved in Theorem \ref{ThmPoincare-Weil}. This theorem is similar to the semisimplicity result of Jannsen~\cite{Jannsen} and states the following:

\begin{theorem}
The category $\mot(\ol\BF_q)$ with the fiber functor $\omega$ is a semi-simple tannakian category. In particular the kernel group $P:=\FP^{\Delta}$ of the corresponding motivic groupoid $\FP:=\Aut^\otimes\bigl(\omega|\mot(\ol\BF_q)\bigr)$ is a pro-reductive group over $Q_{\ol\BF_q}$. 
\end{theorem}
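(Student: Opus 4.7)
The theorem splits into two parts: semi-simplicity of $\mot(\ol\BF_q)$ as an abelian Tannakian category, and pro-reductivity of the kernel group $P = \FP^\Delta$. The second follows formally from the first via the standard non-neutral Tannakian dictionary, according to which the kernel group of the groupoid $\FP$ attached to a Tannakian category is pro-reductive exactly when the category is semi-simple. Hence the substance is to show that every object of $\mot(\ol\BF_q)$ decomposes as a finite direct sum of simples.

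I would first reduce to finite base fields: any $\ul\CM \in \mot(\ol\BF_q)$ is the base change of some $\ul\CM_L \in \mot(L)$ for a finite extension $L/\BF_q$, and semi-simplicity over $\ol\BF_q$ is inherited from semi-simplicity in $\mot(L)$ after possibly enlarging $L$. By Theorem~\ref{Thm_SemisimplicityQEnd}, semi-simplicity of $\ul\CM_L$ in turn amounts to semi-simplicity of the commutative $Q_\nu$-algebra $F_\nu := Q_\nu[\pi_\nu]$, where $\pi \in \QEnd_L(\ul\CM_L)$ is the $L$-Frobenius, $\nu \notin \ul\nu$ is an auxiliary place, and $\pi_\nu = \omega^\nu_{Q_\nu}(\pi)$. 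Concretely, this asks that the minimal polynomial of $\pi_\nu$ on the \'etale realization $\omega^\nu_{Q_\nu}(\ul\CM_L)$ be square-free.

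To prove this I would combine the Tate conjecture (Theorem~\ref{Thm_Tate_Conj}) with the Honda--Tate classification (Theorem~\ref{Thm_HT_C-Mot}). Honda--Tate attaches to each simple $\ul\CN$ a Weil datum with irreducible minimal polynomial $P_{\ul\CN}(T) \in Q[T]$, and distinct simples yield distinct irreducible factors. The $L$-Frobenius $\pi$ is central in $E := \QEnd_L(\ul\CM_L)$ since it commutes with every $L$-endomorphism, and the Tate conjecture identifies $E \otimes_Q Q_\nu$ with the commutant $\End_{Q_\nu[\Gamma_L]}(\omega^\nu_{Q_\nu}(\ul\CM_L))$. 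Granted that this algebra is semi-simple---which follows, once the simple factors of $\ul\CM_L$ are identified by Honda--Tate, from Schur's lemma applied to the division endomorphism algebras of the simples---its centre is a product of fields, and $F_\nu$ sits inside it with minimal polynomial $\prod_i P_{\ul\CN_i}$, a product of distinct irreducibles; hence $F_\nu$ is semi-simple and $\ul\CM_L$ is semi-simple by Theorem~\ref{Thm_SemisimplicityQEnd}.

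The main obstacle is precisely this Frobenius semi-simplicity, the function-field analogue of Weil's classical theorem on the Tate module of an abelian variety; over number fields the corresponding statement for motives is one of Tate's famous unproven conjectures. Here the unconditional Tate conjecture (Theorem~\ref{Thm_Tate_Conj}) together with the Honda--Tate classification (Theorem~\ref{Thm_HT_C-Mot}) is enough to push through the argument, but one must be careful in matching the isotypic decomposition on the \'etale side with a genuine direct-sum decomposition of $\ul\CM_L$ in the abelian category $\mot(L)$, and only then can one conclude that $E$ is semi-simple and that $\pi_\nu$ has square-free minimal polynomial. The deduction of pro-reductivity of $P$ from semi-simplicity is then automatic.
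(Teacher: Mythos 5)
Your proposal correctly identifies the target (via Theorem~\ref{Thm_SemisimplicityQEnd}, reduce to showing that $\pi_\nu$ has square-free minimal polynomial, i.e.\ $F_\nu = Q_\nu[\pi_\nu]$ is semi-simple) and correctly defers pro-reductivity to the standard Tannakian dictionary. But the core step is circular and misses the one genuine idea that makes the theorem true.

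The circularity: Honda--Tate (Theorem~\ref{Thm_HT_C-Mot}) classifies the \emph{simple} objects; to read off from it that the minimal polynomial of $\pi_\nu$ on $\omega^\nu_{Q_\nu}(\ul\CM_L)$ is $\prod_i P_{\ul\CN_i}$ with the $\ul\CN_i$ distinct simples, you need $\ul\CM_L$ to already \emph{be} a direct sum of simples. If $\ul\CM_L$ only has a Jordan--H\"older filtration with simple subquotients $\ul\CN_i$, a non-split extension between isomorphic subquotients produces a nilpotent Jordan block for $\pi_\nu$, and the minimal polynomial acquires a square. Likewise, ``Granted that this algebra is semi-simple\ldots from Schur's lemma applied to the division endomorphism algebras of the simples'' presupposes the decomposition. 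Theorem~\ref{Thm_SemisimplicityQEnd} says precisely that $E = \QEnd_L(\ul\CM_L)$ is semi-simple \emph{if and only if} $\ul\CM_L$ is; you cannot assume one side to get the other. Your own caveat (``one must be careful in matching the isotypic decomposition on the \'etale side with a genuine direct-sum decomposition'') flags this, but you do not resolve it, and the Tate conjecture plus Honda--Tate do not resolve it either.

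The missing idea, which the paper's proof hinges on, is the positive-characteristic phenomenon that a linear map becomes semi-simple after raising to a sufficiently large $p$-power. Write the matrix of $\pi_\nu \otimes 1$ over $Q_\nu^{\mathrm{alg}}$ in Jordan form $S+N$ with $S$ semi-simple, $N$ nilpotent, $SN = NS$; if $[L':L]$ is a power of $p := \mathrm{char}\,\BF_q$ with $[L':L] \geq \rank\CM$, then $(S+N)^{[L':L]} = S^{[L':L]} + N^{[L':L]} = S^{[L':L]}$ because the binomial coefficients vanish mod $p$ and $N^{[L':L]} = 0$. This is the Frobenius of $\ul\CM_{L'}$, so $\ul\CM_{L'}$ has semi-simple Frobenius; by perfectness of $Q_\nu^{\mathrm{alg}}/Q_\nu$ and \cite[Proposition 9.2/4]{BourbakiAlgebra} one descends semi-simplicity to $Q_\nu$ and concludes via Theorem~\ref{Thm_SemisimplicityQEnd}. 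In short: semi-simplicity does \emph{not} hold over a fixed finite $L$ — only after enlarging $L$ to a suitable $L'$ — and the mechanism is the $p$-power trick, not the Tate conjecture or Honda--Tate. You mention in passing that one ``possibly enlarges $L$,'' but without this mechanism the enlargement has no content, and the analogy you draw to the unproven number-field case should have suggested that something characteristic-$p$-specific is being used.
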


Notice that this theorem only holds for $L=\ol\BF_q$ and not for a finite field $L$. This is a remarkable difference to the number fields case. The reason behind this comes from the following elementary observation. Namely, unlike the characteristic zero case, a non semi-simple matrix may become semi-simple after raising to a relevant power. Apart from this difference, note in addition that the category of $C$-motives consists of ``mixed'' motives, in the sense that the eigenvalue of Frobenius endomorphism operating on the cohomology groups might have different absolute values;  compare also \cite{Goss}[Theorem 5.6.10]. In this sense, this result maybe also viewed as a (partial) analog for the semi-simplicity of the subcategory of mixed Tate motives over a finite field (with rational coefficient) inside a Voevodsky's motivic category. The latter fact can be derived from finiteness of $K$-groups over finite fields according to Quillen \cite{Qui}. \\


\section*{Acknowledgments}
 The authors would like to thank the unknown referee for careful reading and many useful comments and explanations which led to a considerable improvement of the exposition and simplified several proofs. \\
The first author is grateful to Chia-Fu Yu for his continues encouragements and limitless support. Moreover, he thanks Giuseppe Ancona, Arash Rastegar and Mehrdad Karimi for inspiring conversations and support.\\
\noindent
The authors acknowledge support of the DFG (German Research Foundation) in form of SFB 878 and Germany's Excellence Strategy EXC 2044--390685587 ``Mathematics M\"unster: Dynamics--Geometry--Structure''. The first author also acknowledges the support of NCTS (National Center For Theoretical Sciences) and IPM (Institute for Research in Fundamental Sciences).

\tableofcontents
\subsection{Notation and Conventions}\label{Notation and Conventions}

Throughout this article we denote by
\begin{tabbing}
$\genericG_\nu:=\FG\times_C\Spec Q_\nu,$\; \=\kill
$\BF_q$\> a finite field with $q$ elements and characteristic $p$,\\[1mm]
$\ol\BF_q$\> an algebraic closure of $\BF_q$,\\[1mm]
$L$\> a ring containing $\BF_q$,\\[1mm]

$\BaseOfD$\> a finite field containing $\BF_q$,\\[1mm]

$C$\> \parbox[t]{0.711\textwidth}{a smooth projective geometrically irreducible curve over $\BF_q$,}\\[1mm]

$\eta$\> the generic point of $C$,\\[1mm]

$\nu$\> a closed point of $C$, also called a \emph{place} of $C$,\\[1mm]

$\ul\nu=(\nu_1,\ldots,\nu_n)$\> a tuple of $n$ pairwise different places $\nu_i$ on $C$,\\[1mm]

$\dot{C}:=C\setminus\ul\nu$\> the open subscheme of $C$,\\[1mm]

$Q:=\kappa(\eta)=\BF_q(C)$\> the function field of $C$,\\[1mm]

$\BF_\nu:=\kappa(\nu)$\> the residue field at the place $\nu$ on $C$,\\[1mm]

$A:=\Gamma(\dot{C},\CO_C)$\> the ring of regular functions outside $\ul\nu$,\\[1mm]

$A_L$\> the ring $A\otimes_{\BF_q}L$,\\[1mm]

$Q_L$\> the ring of fractions $\Frac(A_L)$,\\[1mm]

$A_\nu$\> the completion of the stalk $\CO_{C,\nu}$ at the place $\nu$,\\[1mm]
$Q_\nu:=\Quot(A_\nu)$\> its fraction field,\\[1mm]
$\hat{A}:=\BF\dbl z\dbr$\>  the ring of formal power series in $z$ with coefficients in $\BF$ ,\\[1mm]
$\wh Q:=\Frac(\hat{A})$\> its fraction field,\\[1mm]

$\BA^\ul\nu$\> the ring of integral adeles of $C$ outside $\ul\nu$,\\[1mm]

$\BA_Q^{\ul\nu}:=\BA^\ul\nu\otimes_{\CO_C} Q$\> the ring of adeles of $C$ outside $\ul\nu$,\\[1mm]

\forget
{
$\BD:=\Spec \BF\dbl z \dbr$ \> \parbox[t]{0.711\textwidth}{the spectrum of the ring of formal power series in $z$ with coefficients in $\BF$,}\\[1mm]
$\hat{\BD}:=\Spf \BF\dbl z \dbr$ \>\parbox[t]{0.711\textwidth}{ the formal spectrum of $\BF\dbl z\dbr$ with respect to the $z$-adic topology.}\\
}

$\BD_R:=\Spec R\dbl z \dbr$ \> \parbox[t]{0.711\textwidth}{the spectrum of the ring of formal power series in $z$ with coefficients in an $\BaseOfD$-algebra $R$,}\\[1mm]

$\hat{\BD}_R:=\Spf R\dbl z \dbr$ \>\parbox[t]{0.711\textwidth}{ the formal spectrum of $R\dbl z\dbr$ with respect to the $z$-adic topology.}\\

\end{tabbing}
\noindent
For a formal scheme $\wh S$ we denote by $\Nilp_{\wh S}$ the category of schemes over $\wh S$ on which an ideal of definition of $\wh S$ is locally nilpotent. We  equip $\Nilp_{\wh S}$ with the \'etale topology. We also denote by
\begin{tabbing}
$\genericG_\nu:=\FG\times_C\Spec \wh Q_\nu,$\; \=\kill
$A_\ul\nu$\> the completion of the local ring $\CO_{C^n,\ul\nu}$ of $C^n$ at the closed point $\ul\nu$,\\[1mm]

$\Nilp_{A_\ul\nu}:=\Nilp_{\Spf A_\ul\nu}$\> \parbox[t]{0.77\textwidth}{the category of schemes over $C^n$ on which the ideal defining the closed point $\ul\nu\in C^n$ is locally nilpotent,}\\[2mm]
$\Nilp_{\BaseOfD\dbl\zeta\dbr}:=\Nilp_{\hat\BD}$\> \parbox[t]{0.77\textwidth}{the category of $\BD$-schemes $S$ for which the image of $z$ in $\CO_S$ is locally nilpotent. We denote the image of $z$ by $\zeta$ since we need to distinguish it from $z\in\CO_\BD$.}\\[2mm]
$\FG$\> a flat affine group scheme of finite type over $C$,\\[1mm]
$\BP_\nu:=\FG\times_C\Spec  A_\nu,$ \> the base change of $\FG$ to $\Spec A_\nu$,\\[1mm]
$\genericG_\nu:=\FG\times_C\Spec Q_\nu,$ \> the generic fiber of $\BP_\nu$ over $\Spec Q_\nu$,\\[1mm]

$\BP$\> a flat affine group scheme of finite type over $\BD=\Spec\BaseOfD\dbl z\dbr$,\\[1mm] 
$\genericG$\> the generic fiber of $\BP$ over $\Spec\BaseOfD\dpl z\dpr$.
\end{tabbing}

\noindent
Let $S$ be an $\BF_q$-scheme and consider an $n$-tuple $\ul s:=(s_i)_i\in C^n(S)$. We denote by $\Gamma_\ul s$ the union $\bigcup_i \Gamma_{s_i}$ of the graphs $\Gamma_{s_i}\subseteq C_S:=C\times_{\BF_q}S$. \\

\noindent
We denote by $\sigma_S \colon  S \to S$ the $\BF_q$-Frobenius endomorphism which acts as the identity on the points of $S$ and as the $q$-power map on the structure sheaf. We set
\begin{tabbing}
$\genericG_\nu:=\FG\times_C\Spec Q_\nu,$\; \=\kill
$C_S := C \times_{\Spec\BF_q} S$ ,\> and \\[1mm]
$\sigma := \id_C \times \sigma_S$.\\[1mm]


$Ch_{\sim, d}(-,\BQ)$\> the group of cycles of dimension $d$ modulo the equivalence relation $\sim$,\\ \> with coefficients in $\BQ$. Here $\sim$ stands for an adequate equivalence relation, \\ \> e.g. rational, algebraic, homological and numerical equivalence relations.
\end{tabbing}

\noindent
Let $H$ be a sheaf of groups (for the \'etale topology) on a scheme $X$. In this article a (\emph{right}) \emph{$H$-torsor} (also called an \emph{$H$-bundle}) on $X$ is a sheaf $\CG$ for the \fppf topology on $X$ together with a (right) action of the sheaf $H$ such that $\CG$ is isomorphic to $H$ on a \fppf covering of $X$. Here $H$ is viewed as an $H$-torsor by right multiplication. \\

\begin{definition}
\begin{enumerate}
\item
Let $R$ be a ring and $X\subseteq R$ be a subset. We denote by $C_R(X)$ the \emph{centralizer} of $X$ in $R$, i.e. 
$C_R(X):=\{a\in R; a\cdot x=x\cdot a \forall x\in X\}$. The subring $C_R(C_R(X))$ is called \emph{bicommutant} of $X$ in $R$. 
\item
Let $M$ be an $R$-module. Set $R_M:=\im \left(R\to \End (M,+), a\mapsto (a:m\mapsto a.m)\right)$
\item
We denote by $\CZ(R)$ the centralizer of $R$ in $R$, i.e. $\CZ(R):=C_R(R)$.
\item
We denote by $Bicom_R(M):=C_{\End(M,+)}(\End_R(M)) $
\end{enumerate}
\end{definition}

\begin{remark}\label{RemBicom}
\begin{enumerate}

\item\label{RemBicomA}
\cite[Chapitre 8, \S\,1, n$^{\rm o\!}$~2, Proposition 3]{BourbakiAlgebra}. Let $K$ be a field and $R$ and $R'$ be two $K$-algebras. Let $S\subseteq R$ and $S'\subseteq R'$ be sub-K-algebras. Then we have 
$$
C_{R\otimes_K R'}(S\otimes_K S')=C_R(S)\otimes_K C_{R'}(S') 
$$
in particular $\CZ(R\otimes_K R')=\CZ(R)\otimes_K \CZ(R')$. 
\item\label{RemBicomB}
\cite[Chapitre 8, \S\,4, n$^{\rm o\!}$~2, Corollaire 1]{BourbakiAlgebra}. Let $M$ be a semi-simple $R$-module which is finitely generated as an $\End_R(M)$-module. Then $Bicom_R(M)=R_M$.  

\end{enumerate}

\end{remark}


\section{Category of C-Motives And Realization Functors}\label{SectCategory of C-Motives And Realization Functors}
In this section we present the basic definitions of the category of $C$-motives.\\
Let $\ul\nu=(\nu_1,\ldots,\nu_n)$ be an $n$-tuple of closed points of $C$ and let $A:=\Gamma(\dot{C},\CO_C)$ be the coordinate ring of the open subscheme $\dot{C}:=C\setminus \{\nu_1,\ldots,\nu_n\}$. 

\begin{definition}\label{DefCatCMotives}

\begin{enumerate}
\item
Let $S$ be a scheme over $\BF_q$. A \emph{$C$-motive $\ul\CM$} with characteristic $\ul\nu$ over $S$ is a tuple $(\CM,\tau_\CM)$ consisting of

\begin{enumerate}
\item \label{DefCatCMotives_i}
a locally free sheaf $\CM$ of $\CO_{C_S}$-modules of finite rank,

\item \label{DefCatCMotives_ii}
an isomorphism $\tau_\CM: \sigma^\ast \dot{\CM} \to \dot{\CM}$ where $\dot{\CM}$ denotes the pullback of $\CM$ under the inclusion $\dot{C}_S\to C_S$, and $\sigma=id \times \sigma_S$ where $\sigma_S:S\to S$ is the absolute Frobenius morphism over $\BF_q$.    
\end{enumerate}

\item 
A \emph{morphism} $\ul\CM\to\ul\CN$ is a homomorphism $f:\CM\to\CN$ of sheaves on $C_S$ which fits into the following commutative diagram 

$$
\CD
\sigma^\ast \dot{\CM} @>\tau_\CM|_{\dot{C}_S}>> \dot{\CM}\\
@V{\sigma^\ast f}VV @VVfV\\
\sigma^\ast \dot{\CN}@>\tau_\CN|_{\dot{C}_S}>> \dot{\CN}.
\endCD  
$$
The set of morphisms is denoted $\Hom_S(\ul\CM,\ul\CN)$.

The set of \emph{quasi-morphisms} $\QHom (\ul\CM,\ul\CN)$ consists of the equivalence classes of the commutative diagrams

$$
\CD
\sigma^\ast \dot{\CM} @>\tau_\CM>> \dot{\CM}\\
@V{\sigma^\ast f}VV @VVfV\\
\sigma^\ast \dot{\CN}\otimes\CO(D_S)@>\tau_\CN>> \dot{\CN}\otimes \CO(D_S),
\endCD  
$$

where $D$ is a divisor on $C$ and $D_S:=D\times_{\BF_q} S$, and two such diagrams for divisors $D$ and $D'$ are called equivalent provided that the corresponding diagrams agree when we tensor with $\CO(D_S+D_S')$.\\

Note that when $S=\Spec L$, for a field $L$, one can equivalently say that the set of quasi-morphisms $\QHom (\ul\CM,\ul\CN)$ is given by the following commutative diagrams 

$$
\CD
\sigma^\ast \CM_\eta @>\tau_{\CM,\eta}>> \CM_\eta\\
@V{\sigma^\ast f}VV @VVfV\\
\sigma^\ast \CN_\eta@>\tau_{\CN,\eta}>> \CN_\eta.
\endCD  
$$

Here $\CM_\eta$ denotes the pull back of $\CM$ under $\eta\times_{\BF_q} S \to C_S$.

\item

A \emph{quasi-isogeny}\forget{ (resp. An isogeny)} between $\ul \CM$ and $\ul \CN$ is a morphism in $\QHom (\ul\CM,\ul\CN)$\forget{ (resp. in $\Hom (\ul\CM,\ul\CN)$)} which admits an inverse. See also \cite[Theorem~5.12]{HartlIsog} and Proposition \ref{Prop_dual_isog} below.

\item
We denote by $\mot (S)$\forget{ (resp. $\mot (L)$)} the $Q$-linear\forget{ (resp. $A$-linear)} category whose objects are $C$-motives of characteristic $\ul\nu$ as above, with quasi-morphisms as its morphisms. We further denote by $\mot (S)^\circ$\forget{ (resp. $\mot (L)^\circ$)} the category obtained by restricting the set of morphisms to quasi-isogenies\forget{ (resp. isogenies)}. When $S=\Spec L$ we simply use the notation $\mot (L)$.
\end{enumerate}

\end{definition}

\begin{remark}\label{Rem_AlternativeDefForC-Motives}
Let $\BF_\ul\nu$ be the residue field of the point $\ul\nu\in C^n$. If $L$ is a field extension of $\BF_\ul\nu$, then we can modify the above definition by replacing \ref{DefCatCMotives_ii} as follows. Let $s_i: \Spec L \to C$ be
composition of the natural morphism $\Spec L \to \Spec \BF_\ul\nu \hookrightarrow C^n$
followed by the projection onto
the $i$-th component. It factors as $s_i
: \Spec L \to \Spec \BF_{\nu_i}
\hookrightarrow C$. We may replace \ref{DefCatCMotives_ii} by

(ii)' an isomorphism $\tau_\CM: \sigma^\ast \CM|_{C_L\setminus \cup_i \Gamma_{s_i}} \to \CM|_{C_L\setminus \cup_i\Gamma_{s_i}}$
outside the graphs $\Gamma_{s_i}$
of the morphisms $s_i: \Spec L \to C$, where $\sigma = \id \times \sigma_L$ and $\sigma_L : L \to L$ is the absolute Frobenius morphism over $\BF_q$.

We always have $\dot{C}_L \subseteq C_L\setminus \cup_i\Gamma_{s_i}$ but this inclusion is strict when $\BF_{\ul\nu} \neq \BF_q$.  So the difference
between conditions (ii) and (ii)' is that in (ii)’ it is required that $\tau_\CM$ is an isomorphism on the
larger set $C_L\setminus \cup_i \Gamma_{s_i}$. Condition (ii)' is more natural and better compatible with the theory of
global $\FG$-shtukas, see Section \ref{SectGSht} below. Note that one can choose an embedding $\BF_\ul\nu\to L$, when
$L$ is large enough, then the obvious functor from the category of $C$-motives over $L$ defined with $(ii)'$ to
the category of $C$-motives over $L$ defined with $(ii)$ is fully faithful with morphisms given by
$\QHom (\ul\CM,\ul\CN)$. 

\end{remark}

\begin{proposition}\label{PropMot'ToMot}
 The obvious functor $\mot(L)'\to \mot(L)$ from the category $\mot(L)'$ of $C$-motives over $L$ defined with $(ii)'$ to
the category of $C$-motives over $L$ defined with $(ii)$ is fully faithful. With morphisms given by
$\QHom_L (\ul\CM,\ul\CN)$ it is an equivalence of categories, but with morphisms given by $\Hom_L(\ul\CM, \ul\CN)$
it is not.

\end{proposition}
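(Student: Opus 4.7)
The plan breaks into three parts: full faithfulness via density and torsion-freeness, essential surjectivity for quasi-morphisms via a modification of $\CM$ at finitely many points, and a counterexample for honest morphisms. Throughout I set $U:=C_L\setminus\bigcup_i\Gamma_{s_i}$ and $X:=\ul\nu_L\setminus\bigcup_i\Gamma_{s_i}$, so that (ii)' demands $\tau_\CM$ be defined on all of $U$ whereas (ii) only demands it on the dense open $\dot{C}_L=U\setminus X$; by the preceding remark, $X$ is non-empty exactly when some $\BF_{\nu_i}$ strictly contains $\BF_q$ and embeds into $L$.

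For full faithfulness I would argue as follows. Both $\Hom_{\mot(L)'}(\ul\CM,\ul\CN)$ and $\Hom_{\mot(L)}(\ul\CM,\ul\CN)$ are subsets of $\Hom_{\CO_{C_L}}(\CM,\CN)$, hence the forgetful functor is faithful. For fullness, given $f\in\Hom_{\mot(L)}(\ul\CM,\ul\CN)$, the two maps $f\circ\tau_\CM$ and $\tau_\CN\circ\sigma^\ast f$ are morphisms of locally free (hence torsion-free) sheaves on $U$ that agree on the dense open $\dot{C}_L$, so they agree on $U$, whence $f$ also satisfies (ii)'.

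For essential surjectivity on quasi-morphisms I plan to modify $(\CM,\tau_\CM)\in\mot(L)$ only at the finite set $X$. Regarding the stalks $\CM_x$ for $x\in X$ as lattices inside the generic fibre $M:=\CM_\eta$, I will choose new lattices $\CM'_x\subseteq M$ so that the resulting locally free sheaf $\CM'$, equal to $\CM$ on $C_L\setminus X$, makes $\tau_\CM$ into an isomorphism $\sigma^\ast\CM'|_U\isoto\CM'|_U$. Since $\sigma$ permutes the points of $\ul\nu_L$ lying over each $\nu_i$ and in general moves a graph $\Gamma_{s_j}$ to another such point, the construction must respect the $\sigma$-orbit structure on $X$: for a $\sigma$-orbit meeting some $\Gamma_{s_j}$, the lattice at the graph point is prescribed by $\CM$, so the lattices at the remaining orbit points are forced by successive transport under $\tau_\CM$; for an orbit contained entirely in $X$, one solves a cyclic compatibility condition by a standard saturation procedure, stabilising after finitely many iterations of $\tau_\CM$ because the elementary-divisor discrepancies are bounded. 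The hardest part will be verifying that these stalkwise prescriptions glue to a locally free sheaf of the correct rank, with $\tau'$ honestly invertible at every $x\in X$; this reduces to a uniformizer-by-uniformizer check using that $\tau_\CM$ is already invertible at the generic point. Once $\CM'$ is built, the identity on $M$ supplies the required quasi-isomorphism $\CM\to\CM'$.

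Finally, to show that the functor is not essentially surjective with honest morphisms, I will exhibit the simplest counterexample. Take $n=1$, a closed point $\nu_1$ of $C$ with $\BF_{\nu_1}\supsetneq\BF_q$, and $L=\BF_{\nu_1}$, so that $X$ contains a point $x\neq\Gamma_{s_1}$. On $\CM:=\CO_{C_L}$ let $\tau_\CM$ be multiplication by a rational function $f\in Q_L^\times$ having a simple zero at $x$, units at the remaining points of $X$, and no zeros or poles on $\dot{C}_L$. Then $(\CM,\tau_\CM)\in\mot(L)$, but any honest isomorphism $(\CM,\tau_\CM)\simeq(\CM',\tau')$ in $\mot(L)$ with $(\CM',\tau')\in\mot(L)'$ would carry the simple zero of $\tau_\CM$ at $x$ to a non-isomorphism point of $\tau'$, contradicting (ii)'.
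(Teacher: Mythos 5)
Your proof follows the same core strategy as the paper for the two positive claims, but with different bookkeeping, and additionally supplies a counterexample for the negative claim that the paper asserts without proof. For full faithfulness, your density/torsion-freeness argument is a correct (and slightly more explicit) version of what the paper dismisses with ``follows from the definitions.'' For essential surjectivity in the $\QHom$-case, your idea of replacing the stalks $\CM_x$ at the non-graph points $x\in X$ by lattices obtained by transport under $\tau_\CM$ along the $\sigma$-orbits, starting from the graph points, is exactly the paper's construction; the paper packages it via fpqc-descent over the covering of $C_L$ by $\dot{C}_L$ and $\coprod_{\nu\in\ul\nu}\Spec\bigl(A_\nu\wh\otimes_{\BF_q}L\bigr)$ and writes the transported lattices explicitly as $\tau_\CM^i\bigl(\sigma^{i*}(\ul\CM\otimes(A_\nu\wh\otimes L)/\Fa_0)\bigr)$, whereas you leave the gluing at ``a uniformizer-by-uniformizer check''; this is a standard modification of a vector bundle on a curve at finitely many closed points, so there is no real obstacle, but citing the descent argument (or the elementary fact that a locally free sheaf is determined by its generic fibre together with full-rank stalks at all closed points) would close it cleanly.

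Two small remarks. First, the case of ``a $\sigma$-orbit contained entirely in $X$'' does not arise: under the standing assumption of Remark~\ref{Rem_AlternativeDefForC-Motives} that $\BF_{\ul\nu}$ embeds into $L$, the $\deg\nu_i$ closed points of $C_L$ over $\nu_i$ form a single $\sigma$-orbit containing exactly one graph point $\Gamma_{s_i}$, so every orbit over $\ul\nu$ meets a graph and the transport is always initialized there. Your saturation discussion is therefore superfluous, though harmless. Second, the last sentence should say ``quasi-isogeny,'' not ``quasi-isomorphism.'' Where you genuinely add value is the $\Hom_L$-counterexample: the paper's proof only treats the $\QHom$-equivalence and leaves the last sentence of the proposition unjustified. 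Your rank-one example with $\tau_\CM$ given by multiplication by a function having a simple zero at $x\in X$ (and compensating poles only at the graph points) is correct: since full faithfulness holds for $\Hom_L$ as well by the same torsion-freeness argument, an isomorphism $(\CM,\tau_\CM)\cong(\CM',\tau')$ in the $\Hom_L$-category would force $\tau'$ to fail to be an isomorphism at $x$, contradicting $(ii)'$.
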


\begin{proof}
Full faithfulness follows from the definitions. Now let $\ul\CM\in\mot(L)$. We are going to define a $\ul\CM'\in\mot(L)'$ which is isomorphic to $\ul\CM$ in $\mot(L)$. For this purpose fix a point $\nu\in\ul\nu$ and let $A_\nu$ be the completion of the local ring $\CO_{C,\nu}$ and let $Q_\nu$ be the fraction field of $A_\nu$. Since the point $\nu$ splits in $C_L$ into $\deg(\nu)$-many $L$-valued points, the completed tensor products
\begin{eqnarray*}
A_\nu\wh{\otimes}_{\BF_q}L & = & \prod_{i\in\BZ/(\deg\nu)}(A_\nu\wh{\otimes}_{\BF_q}L)/\Fa_i\qquad\text{and} \\[2mm]
Q_\nu\wh{\otimes}_{\BF_q}L & = & \prod_{i\in\BZ/(\deg\nu)}(Q_\nu\wh{\otimes}_{\BF_q}L)/\Fa_i
\end{eqnarray*}
split correspondingly, where $\Fa_i= \langle \alpha\otimes 1-1\otimes \alpha^{q^i} : \alpha\in \BF_\nu\rangle$. The map $\sigma$ sends $\Fa_i$ to $\Fa_{i+1}$, and hence $\tau_\CM$ yields isomorphisms 
\[
\sigma^*\bigl(\dot\CM\otimes_{\CO_{\dot{C}_L}}(Q_\nu\wh{\otimes}_{\BF_q}L)/\Fa_i\bigr)\;=\;(\sigma^*\dot\CM)\otimes_{\CO_{\dot{C}_L}}(Q_\nu\wh{\otimes}_{\BF_q}L)/\Fa_{i+1}\;\isoto\;\dot\CM\otimes_{\CO_{\dot{C}_L}}(Q_\nu\wh{\otimes}_{\BF_q}L)/\Fa_{i+1}
\]
for all $i\in\BZ/(\deg\nu)$. Since $\dot{C}_L$ together with $\coprod_{\nu\in\ul\nu}\Spec A_\nu\wh{\otimes}_{\BF_q}L$ is an fpqc-covering of $C_L$ we may construct $\ul\CM'$ from $\ul\CM$ by modifying it in the components $\ul\CM\otimes_{\CO_{C_L}}(A_\nu\wh{\otimes}_{\BF_q}L)/\Fa_i$ for all $i\ne0$ at all $\nu\in\ul\nu$ according to
\[
\ul\CM'\otimes_{\CO_{C_L}}(A_\nu\wh{\otimes}_{\BF_q}L)/\Fa_i\;:=\;\tau_\CM^i\Bigl(\sigma^{i*}\bigl(\ul\CM\otimes_{\CO_{C_L}}(A_\nu\wh{\otimes}_{\BF_q}L)/\Fa_0\bigr)\Bigr)\;\subset\;\dot\CM\otimes_{\CO_{\dot{C}_L}}(Q_\nu\wh{\otimes}_{\BF_q}L)/\Fa_i\,.
\] 
Then by construction $\ul\CM':=(\CM',\tau_{\CM'})$ with $\tau_{\CM'}:=\tau_\CM$ satisfies
\[
\tau_{\CM'}: (\sigma^*\dot\CM)\otimes_{\CO_{\dot{C}_L}}(Q_\nu\wh{\otimes}_{\BF_q}L)/\Fa_i\;\isoto\;\dot\CM\otimes_{\CO_{\dot{C}_L}}(Q_\nu\wh{\otimes}_{\BF_q}L)/\Fa_i
\]
for all $i\ne0$, and so $\ul\CM'$ belongs to $\mot(L)'$. Since the restrictions of $\ul\CM$ and $\ul\CM'$ to $\dot{C}_L$ coincide, we have $\ul\CM\cong\ul\CM'$ in $\mot(L)$ as desired.
\end{proof}

\begin{proposition} Let $L$ be a field over $\BF_q$. The category $\mot(L)$ is a $Q$-linear rigid abelian tensor category. It further admits a fiber functor over the function field $Q_L$ of the curve $C_L$.
\end{proposition}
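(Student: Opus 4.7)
The plan is to reduce every structural property of $\mot(L)$ to the generic fiber $\eta$ of $C_L$. Over a field $L$, the definition of quasi-morphisms identifies $\QHom_L(\ul\CM,\ul\CN)$ with the set of $Q_L$-linear maps $\CM_\eta\to\CN_\eta$ intertwining $\tau_\CM$ and $\tau_\CN$, so $\mot(L)$ embeds fully faithfully into the evidently abelian category of pairs $(V,\tau_V)$ where $V$ is a finite-dimensional $Q_L$-vector space and $\tau_V\colon\sigma^*V\isoto V$ is a $Q_L$-linear isomorphism. In particular Hom-sets are $Q_L$-vector spaces, so $\mot(L)$ is $Q_L$-linear and a fortiori $Q$-linear. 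The tensor structure is defined termwise by $\ul\CM\otimes\ul\CN:=(\CM\otimes_{\CO_{C_L}}\CN,\,\tau_\CM\otimes\tau_\CN)$ with unit $\mathds{1}:=(\CO_{C_L},\id)$, and all coherence constraints are inherited from those of coherent sheaves on $C_L$.

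For abelianness, given a morphism $f\colon\ul\CM\to\ul\CN$, compute $K:=\ker f_\eta$ on the generic fiber and define the kernel of $f$ by saturation: $\CK:=K\cap\CM\subset\CM$. Since $C_L$ is a smooth (hence regular one-dimensional) curve over $L$, the torsion-free coherent sheaf $\CK$ is locally free. Because $\sigma$ is flat, $\sigma^*\dot\CK$ is still saturated in $\sigma^*\dot\CM$, and an isomorphism sends saturated subsheaves to saturated subsheaves, so $\tau_\CM$ restricts to an isomorphism $\sigma^*\dot\CK\isoto\dot\CK$; thus $(\CK,\tau_\CM|)$ is a $C$-motive representing $\ker f$. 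Cokernels are produced analogously by quotienting $\CN$ by the saturation $(\im f_\eta)\cap\CN$, using the same preservation of saturation under $\tau_\CN$. The remaining abelian axioms reduce on the generic fiber to the obvious statements about finite-dimensional $Q_L[\tau]$-modules.

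Rigidity is provided by the dual $\ul\CM^\vee:=\bigl(\CHom_{\CO_{C_L}}(\CM,\CO_{C_L}),\,(\tau_\CM^\vee)^{-1}\bigr)$, where the inverse transpose is a well-defined isomorphism on $\dot{C}_L$ because $\tau_\CM$ is. The standard evaluation and coevaluation morphisms for locally free sheaves of finite rank are compatible with the $\tau$-structures, and the triangle identities are verified on the generic fiber. Finally, the functor $\omega\colon\mot(L)\to Q_L\text{-Vect}$, $\ul\CM\mapsto\CM_\eta$, is the sought fiber functor: it is $Q_L$-linear, faithful by the definition of quasi-morphisms, tensor-compatible since $(\CM\otimes\CN)_\eta=\CM_\eta\otimes_{Q_L}\CN_\eta$, and exact by the preceding construction of kernels and cokernels, taking values in finite-dimensional $Q_L$-vector spaces. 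I expect the only technical point needing care to be the saturation-preservation claim underlying the kernel/cokernel construction, which is a straightforward local argument at each point of $\dot{C}_L$.
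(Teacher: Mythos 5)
Your construction of kernels, cokernels, the tensor product, duals, and the fiber functor is essentially the paper's: your ``saturation of $\ker f_\eta$'' is exactly $\ker(f\colon\CM\to\CN(D))$, and your ``quotient by the saturation of $\im f_\eta$'' is the paper's $\coker f$ modulo its torsion subsheaf; your $(\tau_\CM^\vee)^{-1}$ agrees with the paper's internal-Hom description $h\mapsto h\circ\tau_\CM^{-1}$.

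However, the claim that ``Hom-sets are $Q_L$-vector spaces, so $\mot(L)$ is $Q_L$-linear'' is false, and it matters conceptually. If $f\colon\CM_\eta\to\CN_\eta$ intertwines $\tau_\CM$ and $\tau_\CN$ and $a\in Q_L$, then $\sigma^\ast(af)=\sigma(a)\,\sigma^\ast f$ (since $\sigma=\id_C\times\sigma_L$ acts nontrivially on the $L$-side of $Q_L=Q\otimes_{\BF_q}L$), so $af$ intertwines $\tau$'s only when $\sigma(a)=a$, i.e.\ $a\in Q_L^{\sigma}=Q$. Thus the Hom-sets are only $Q$-vector spaces, and indeed $\QEnd(\ul{\mathds{1}})=Q$, which is precisely what the paper records and what makes $\mot(L)$ a \emph{non-neutral} Tannakian category over $Q$ (with fiber functor valued in the larger field $Q_L$): were the category $Q_L$-linear with $\QEnd(\ul{\mathds{1}})=Q_L$, the generic-fiber functor would already neutralize it over $Q_L$, contradicting the whole point. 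Your stated conclusion ($Q$-linearity) is correct, but it should be obtained directly from $\sigma$-invariance, not by ``a fortiori'' from a false $Q_L$-linearity. The same correction applies to the auxiliary category of pairs $(V,\tau_V)$: its Hom-sets are $Q$-vector spaces, not $Q_L$-vector spaces.
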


\begin{proof}

Let $f:\CM \to \CN(D)$ be a representative for a morphism in $\QHom(\ul\CM,\ul\CN)$. Then $\ul\ker f:=(\ker f, \tau_\CM|_{\sigma^\ast \ker f})$ and $\ul\im f:=(\im f, \tau_{\CN}|_{\sigma^\ast \im f})$. Consider the cokernel $\CF:=\coker f:\CM\to\CN(D)$ as a coherent sheaf of $\CO_{C_L}$-modules. The torsion subsheaf $\CT$ has finite support and $\CF/\CT$ is a torsion free sheaf. The morphism $\tau_\CN$ induces a morphism $\tau_\CF: \sigma^\ast \CF\to\CF$. We define $\ul\coker f:=(\CF/\CT,\tau_{\CF/\CT})$ and $\ul{\coim} f:=\ul\ker (\ul \CN\to \ul \coker f)$. Moreover this is clear that the natural morphism $\ul\im f\to\ul\coim f$ is a quasi-isogeny and therefore an isomorphism in $\mot(L)$\forget{ and thus the category is abelian}.\\ 
The tensor product of two $C$-motives $\ul \CM:=(\CM,\tau_\CM)$ and $\ul \CN:=(\CN,\tau_\CN)$ is the $C$-motive $\ul \CM \otimes \ul\CN$ consisting of the locally free sheaf of $\CO_{C_L}$-modules
$\CM \otimes_{\CO_{C_L}} \CN$ and the isomorphism $\tau_{\ul\CM\otimes\ul\CN}:=\tau_\CM\otimes \tau_\CN$. The unit object for the tensor product is $\ul{\mathds{1}}:=(\CO_{C_L},\id)$, and precisely we have $\QEnd(\ul{\mathds{1}})=Q$ . One can easily see that this category has an internal $\ul\Hom$ object. Namely we define $\ul\CH:=\ul\Hom (\ul \CM, \ul \CN)$ to be the object with $\CH:= \Hom_{\CO_{C_L}} (\CM, \CN)$ as the underlying locally free sheaf and $\tau_{\CH}$ is given by sending $h \in \CH$ to $\tau_\CN \circ h \circ \tau_\CM^{-1}$. This further defines the functor 
$$
\check{(-)}:=\ul\Hom(-,\mathds{1}): \mot(L) \to \mot(L),
$$ 
which sends $\ul\CM$ to its dual $\ul{\check{\CM}}$. Finally sending a $C$-motive $\ul\CM:=(\CM,\tau_\CM)$ to the generic fiber $\CM_\eta$ of the underlying locally free sheaf $\CM$ equips the category with a fiber functor 
$$
\omega(-):\mot(L)\to Q_L-\text{Vector spaces}.
$$

\end{proof}

\begin{proposition}\label{Prop_dual_isog}
Let $\ul\CM$ and $\ul\CN$ be $C$-motives, over a field $L$. Assume that $L$ is finite over $\BF_q$ of degree $s$. The following are equivalent
\begin{enumerate}
\item
$f\in \Hom(\ul\CM,\ul\CN)$ is a quasi-isogeny.
\item
There is a non-zero element $a\in A$ and $\check{f}\in \Hom(\ul\CN,\ul\CM)$ with $\check{f}\circ f=a. \id_{\CM}$ and $f\circ \check{f}=a. \id_{\CN}$.
\end{enumerate}
\end{proposition}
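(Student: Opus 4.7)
The implication (ii)~$\Rightarrow$~(i) is immediate: since $0\neq a\in A\subset Q$ is invertible in the $Q$-linear category $\mot(L)$, the quasi-morphism $g:=a^{-1}\check f\in\QHom_L(\ul\CN,\ul\CM)$ will satisfy $g\circ f=\id_{\ul\CM}$ and $f\circ g=\id_{\ul\CN}$, witnessing that $f$ is invertible in $\mot(L)$.

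For the converse (i)~$\Rightarrow$~(ii), the plan is to start with a quasi-inverse $g\in\QHom_L(\ul\CN,\ul\CM)$ of $f$. Since $L$ is a field, this amounts to a $\tau$-equivariant $Q_L$-linear isomorphism $g_\eta\colon\CN_\eta\isoto\CM_\eta$ of the generic fibres, and my aim is to clear denominators by an appropriate $a\in A\setminus\{0\}$, setting $\check f:=a\cdot g$. If $\check f$ lifts from the generic fibre to an honest morphism of sheaves $\CN\to\CM$ on $C_L$, then the equalities $\check f\circ f=a\cdot\id_\CM$ and $f\circ\check f=a\cdot\id_\CN$ will hold on generic fibres by construction, and the $\tau$-compatibility of $\check f$ will be inherited from that of $g$ together with the observation that every $a\in Q$ is $\sigma$-invariant (as $\sigma$ acts trivially on $Q\subseteq Q_L$), so that multiplication by $a$ commutes with $\tau$.

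To produce $a$, I will examine the cokernel $\CT:=\coker(f\colon\CM\to\CN)$, a coherent sheaf on $C_L$ whose generic fibre vanishes, hence torsion with finite support projecting down to a finite set $Z\subseteq C$. Since $A=\Gamma(\dot C,\CO_C)$ is a Dedekind domain, I can choose a nonzero $a\in A$ whose order of vanishing at each $x\in Z\cap\dot C$ is at least the length of $\CT$ on any fibre of $C_L\to C$ above $x$. Then $a\cdot\CN|_{\dot C_L}\subseteq f(\CM)|_{\dot C_L}$, so $ag$ restricts to a morphism of sheaves $\dot\CN\to\dot\CM$ on $\dot C_L$. The hard part will be extending $\check f$ across the fibres above $\ul\nu$: for this I will exploit the finiteness of $L$---so that the $L$-Frobenius $\pi=\tau^s$ is $\CO_{\dot C_L}$-linear and provides a rigidifying quasi-endomorphism of $\ul\CM$ and $\ul\CN$ near $\ul\nu$---combined with a local analysis at each $\nu\in\ul\nu$ in the spirit of \cite[Theorem~5.12]{HartlIsog}, showing that $\Supp(\CT)$ avoids the fibres above $\ul\nu$ and hence $\check f=ag$ extends to a global morphism of sheaves $\CN\to\CM$. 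This local step at $\ul\nu$ is where I expect the main technical work to lie, since unlike the Anderson ($A$-motive) setting, the $\tau$-structure is a priori absent at $\ul\nu$ and the bookkeeping with the characteristic sections has to be done carefully.
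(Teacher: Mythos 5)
Your direction (ii)~$\Rightarrow$~(i) is correct. For (i)~$\Rightarrow$~(ii), the strategy of clearing denominators by an element $a\in A$ killing the cokernel is the right one and is exactly what happens in the references the paper points to (\cite{TaelmanII}, \cite{HartlIsog}), which work over the affine curve $\Spec A$. But the step you defer---passing from $\dot C_L$ to $C_L$---is precisely where your sketch goes wrong, for two distinct reasons.

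First, the auxiliary claim that $\Supp(\CT)$ avoids the fibres above $\ul\nu$ is false in general. There is no $\tau$-constraint above $\ul\nu$: for any closed point $x\in C_L$ lying over some $\nu_i\in\ul\nu$, the inclusion $\CM(-x)\hookrightarrow\CM$ (with the $\tau$-structure on the left transported along it) is a morphism in $\Hom_L$ that is an isomorphism on $\dot C_L$, hence a quasi-isogeny, and its cokernel is the skyscraper at $x$. So a quasi-isogeny can perfectly well have cokernel concentrated above $\ul\nu$.

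Second, and more seriously, even granting $\Supp(\CT)\cap(\ul\nu)_L=\emptyset$, the morphism $ag$ still fails to extend across the fibres above $\ul\nu$: the nonconstant $a\in A=\Gamma(\dot C,\CO_C)$ has poles precisely there. If $g$ is regular above $\ul\nu$ (as it would be under your assumption) then $ag$ has poles above $\ul\nu$, so it is \emph{not} a morphism of sheaves $\CN\to\CM$ on $C_L$. Multiplication by $a$ can only help you inside $\dot C_L$; it makes things strictly worse at $\ul\nu$. In fact the target you set yourself is unattainable as stated: if $\check f\in\Hom_L(\ul\CN,\ul\CM)$ is an honest sheaf morphism on $C_L$ and $\check f\circ f$ is generically $a\cdot\id$, then the regular endomorphism $\check f\circ f:\CM\to\CM$ would force $a\CM\subseteq\CM$ everywhere, hence $a\in\Gamma(C,\CO_C)=\BF_q$, i.e.\ $a$ is a unit and $f$ an isomorphism. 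The proposition therefore has to be read with $\check f$ taking values in a twist $\CM\otimes\CO(D)$ for $D$ supported above $\ul\nu$ (say, the polar divisor of $a$), or equivalently with the two identities understood in $\QEnd_L$; once that is done, your argument with $\CT$ and a good choice of $a\in A$ does close the gap over $\dot C_L$ as in the affine references, and the issue at $\ul\nu$ is absorbed by the twist rather than needing an actual extension. As written, though, your plan both asserts a false auxiliary lemma and would not suffice even if that lemma held.
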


\begin{proof}

The proof is similar to \cite[Corollary 5.4]{Bor-Har2} and \cite[Proposition 3.1.2]{TaelmanII}. 
See also \cite[Corollary 5.15]{HartlIsog}. 

\end{proof}

\subsection{Crystalline and \'Etale realization functors and Tate conjecture}\label{SubSectRealizationFunctors}

In analogy with the theory of motives, the category of $C$-motives also admits crystalline and \'etale realizations. In this section we recall the definition of the corresponding realization categories and functors. We further prove the analog of the Tate conjecture for the category $C$-motives over a finite field. The results are similar to \cite[\S\S\,8, 9]{Bor-Har2}.

\subsubsection{Category Of (Iso-)Crystals}

Fix a place $\nu\in C$ and a uniformizer $z:=z_\nu\in A_\nu=\hat{\CO}_{C,\nu}$. It yields canonical isomorphisms $A_\nu=\BF_\nu\dbl z\dbr$ and $Q_\nu=\BF_\nu\dpl z\dpr$. Let $L/\BF_\nu$ be a field extension. Let $\hat{\sigma}_\nu$ be the endomorphism of $L\dbl z\dbr=A_\nu\wh{\otimes}{_{\BF_\nu}} L$ which is the $\#\BF_\nu$-Frobenius $b\mapsto b^{\#{\BF_\nu}}=b^{q^{\deg\nu}}$ on $L$ and fixes $z$. Also let $L\dpl z\dpr=Q_\nu\wh{\otimes}{_{\BF_\nu}} L$ denote the fraction field of $L\dbl z\dbr$.

\begin{definition}\label{Def_Crystal}
Keep the above notation. 
\begin{enumerate}
\item
A $\hat{\sigma}_\nu$-\emph{crystal} $\hat{\ul M}$ (resp. $\hat{\sigma}_\nu$-\emph{iso-crystal}) of rank $r$ over $L$ is a tuple $(\hat{M},\hat{\tau})$  (resp. $(\dot{\hat{M}},\hat{\tau})$ ) consisting of the following data

\begin{enumerate}

\item a free $L\dbl z\dbr$-modulue $\hat{M}$ (resp. an $L\dpl z\dpr$-vector space $\dot{\hat{M}}$) of rank $r$,

\item an isomorphism $\hat{\tau}: \hat{\sigma}_\nu^\ast \hat{M}[1/z] \to \hat{M}[1/z]$ (resp. $\hat{\tau}: \hat{\sigma}_\nu^\ast \dot{\hat{M}} \to \dot{\hat{M}}$), where $\hat{M}[1/z]:=\hat{M}\otimes_{L\dbl z \dbr} L\dpl z \dpr$.

\end{enumerate}
A $\hat{\sigma}_\nu$-crystal is \emph{\'etale} if the isomorphism $\hat{\tau}: \hat{\sigma}_\nu^\ast \hat{M}[1/z] \to \hat{M}[1/z]$ comes from an isomorphism $\hat{\tau}: \hat{\sigma}_\nu^\ast \hat{M} \to \hat{M}$.\\ 

\item A \emph{quasi-morphism} (resp. \emph{morphism}) between $\hat{\sigma}_\nu$-crystals $\hat{\ul M}:=(\hat{M},\hat{\tau})$ and $\hat{\ul M}':=(\hat{M}',\hat{\tau}')$ is a morphism $f:\hat{M}[1/z]\to \hat{M}'[1/z]$  (resp. $f:\hat{M}\to \hat{M}'$) such that $f\circ \hat{\tau}=\hat{\tau}' \circ \hat{\sigma}_\nu^\ast f$. We denote by $\hat{\sigma}_\nu$-$\textbf{CrystIso}(L)$ (resp. $\hat{\sigma}_\nu$-$\textbf{Cryst}(L)$) the $Q_\nu$-linear (resp. $A_\nu$-linear) category of $\hat{\sigma}_\nu$-crystals together with quasi-morphisms (resp. morphisms) as its morphisms. 
We denote by $\textbf{\'Et~$\hat{\sigma}$-CrystIso}(L)$ (resp. $\textbf{\'Et~$\hat{\sigma}$-Cryst}(L)$) the full $Q_\nu$-linear (resp. $A_\nu$-linear) subcategory of $\hat{\sigma}$-$\textbf{CrystIso}(L)$ (resp. $\hat{\sigma}$-$\textbf{Cryst}(L)$) consisting of \'etale $\hat{\sigma}$-crystals.

\item
A \emph{quasi-morphism} between $\hat{\sigma}_\nu$-iso-crystals $(\dot{\hat{M}},\hat{\tau})$ and $(\dot{\hat{M}}',\hat{\tau}')$ is a morphism $f:\dot{\hat{M}}\to \dot{\hat{M}}'$ such that $f\circ \hat{\tau}=\hat{\tau}' \circ \hat{\sigma}_\nu^\ast f$. We denote by $\hat{\sigma}_\nu$-$\textbf{IsoCryst}(L)$ the $Q_\nu$-linear category of $\hat{\sigma}_\nu$-iso-crystals together with quasi-morphisms as its morphisms.

\end{enumerate}

\end{definition}

\begin{definition}\label{Def_EtRealization}
The \emph{first \'etale cohomology realization} of an \'etale $\hat{\sigma}_\nu$-crystal $\hat{\ul M}$ is the $\Gamma_L := Gal(L^{sep}/L)$-module of $\hat{\tau}$-invariants
$$
\Koh_{\text{\'et}}^1(\hat{\ul M},A_\nu) \;:=\; \left(\hat{M} \otimes_{L\dbl z\dbr} L^\sep\dbl z\dbr \right)^{\hat{\tau}}\;:=\;\bigl\{\,m\in\hat{M} \otimes_{L\dbl z\dbr} L^\sep\dbl z\dbr\colon m=\hat\tau(\hat\sigma^*_\nu m)\,\bigr\}.$$ 
We set $ \Koh_{\text{\'et}}^i(\hat{\ul M},A_\nu) := \wedge^i \Koh_{\text{\'et}}^1(\hat{\ul M},A_\nu)$ and $\Koh_{\text{\'et}}^i(\hat{\ul M},Q_\nu):=\Koh_{\text{\'et}}^i(\hat{\ul M},A_\nu)\otimes_{A_\nu} Q_\nu.
$
 
\end{definition}

\noindent
We recall the following crucial result.
 
\begin{theorem}\label{Thm_EmbeddingOfEtCrystintoGalMods}
We have the following statements for an \'etale $\hat\sigma_\nu$-crystal $\hat{\ul M}$.

\begin{enumerate}
\item
$\Koh_{\text{\'et}}^1(\hat{\ul M},A_\nu)$ is a free $A_\nu$-module of rank equal to $\rk\hat{\ul{M}}$ and the following natural morphism
 $$
\Koh_{\text{\'et}}^1(\hat{M},A_\nu) \otimes_{A_\nu} L^\sep\dbl z\dbr\to \hat{M}\otimes_{L\dbl z\dbr}L^\sep\dbl z\dbr
 $$ 
is a $\Gamma_L\times\hat{\tau}$-equivariant isomorphism of $L^\sep\dbl z\dbr$-modules, where on the left hand side $\Gamma_L$-acts diagonally and $\hat{\tau}$ acts as $\id\otimes\hat{\sigma}_\nu^\ast$ and on the right hand side $\Gamma_L$ only acts on $L^\sep\dbl z\dbr$ and $\hat\tau$ acts as $m\otimes f\mapsto\hat\tau(\hat\sigma^*_\nu m)\otimes\hat\sigma^*_\nu(f)$. In particular 
$\Koh_{\text{\'et}}^1(\hat{\ul M},A_\nu)$ is a free $A_\nu$-module of rank $\rk \hat{\ul M}$. 

\item
The first \'etale cohomology functor 
$$
\Koh_{\text{\'et}}^1(-,A_\nu):\textbf{\'Et~$\hat{\sigma}_\nu$-Cryst}(L)\to A_\nu[\Gamma_L]\text{-modules}
$$ 
is a fully faithful embedding of the category of $\textbf{\'Et~$\hat{\sigma}_\nu$-Cryst}(L)$ into the category of $A_\nu[\Gamma_L]$-modules. One can recover $\hat{\ul M}$ from its \'etale realization by taking Galois invariants

$$
\ul{\hat{M}}:=\left( \Koh_{\text{\'et}}^1(\hat{\ul M},A_\nu)\otimes_{A_\nu} L^\sep\dbl z\dbr\right)^{\Gamma_L}.
$$

\end{enumerate}

\end{theorem}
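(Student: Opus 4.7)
The plan is to establish part (a) first and then deduce part (b) from it by a standard argument with internal Homs and Galois descent. Part (a) is an instance of the ``Lang/Artin--Schreier'' type theorem that $\hat\tau$-invariants of an étale $\hat\sigma_\nu$-crystal become maximal after base change to a separably closed coefficient field. Set $N := \hat M \otimes_{L\dbl z\dbr} L^\sep\dbl z\dbr$ with its induced isomorphism $\hat\tau_N: \hat\sigma_\nu^*N \isoto N$. It suffices to show that $N^{\hat\tau_N}$ is a free $A_\nu$-module of rank $r := \rk \hat{\ul M}$ and that the natural map $N^{\hat\tau_N} \otimes_{A_\nu} L^\sep\dbl z\dbr \to N$ is an isomorphism, because Galois equivariance on the left is then visible from the construction.

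The strategy for this base case is successive approximation in the $z$-adic topology. Modulo $z$ we obtain a $\hat\sigma_\nu$-semilinear automorphism of the free $L^\sep$-module $N/zN$ of rank $r$, and Lang's theorem for $\GL_r$ over the separably closed field $L^\sep$ (equivalently, the vanishing of $\Koh^1(\Gal(L^\sep/L^\sep), \GL_r(L^\sep))$) gives a basis of $\hat\tau$-fixed vectors, so $(N/zN)^{\hat\tau} \otimes_{\BF_\nu} L^\sep \isoto N/zN$. Next I would lift inductively: given $\bar m_n \in N/z^n N$ fixed by $\hat\tau$, any preimage $m_n \in N/z^{n+1}N$ satisfies $\hat\tau(\hat\sigma_\nu^* m_n) - m_n \in z^n(N/z^{n+1}N)$, and the obstruction to correcting $m_n$ by an element of $z^n(N/z^{n+1}N)$ to make it fixed is to solve $(\hat\tau - \id)(x) = y$ in $N/zN$; this is solvable thanks to the mod-$z$ surjectivity already established (applied component-wise in a $\hat\tau$-fixed basis). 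Taking the projective limit over $n$ and using $z$-adic completeness of $L^\sep\dbl z\dbr$ produces an $A_\nu$-basis of $N^{\hat\tau_N}$ whose image generates $N$ over $L^\sep\dbl z\dbr$; the corresponding map is an isomorphism because it reduces to one modulo every power of $z$.

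For part (b), full faithfulness is a direct consequence of (a) applied to the internal $\ul\Hom$: for any two étale $\hat\sigma_\nu$-crystals $\hat{\ul M}, \hat{\ul M}'$ the object $\ul\Hom(\hat{\ul M},\hat{\ul M}')$ is again étale, morphisms in the category correspond to its $\hat\tau$-fixed part, and the comparison isomorphism identifies this with the $\Gamma_L$-equivariant $A_\nu$-linear morphisms between the étale realizations. The recovery formula follows by taking $\Gamma_L$-invariants on both sides of the comparison isomorphism in (a): since $(L^\sep\dbl z\dbr)^{\Gamma_L} = L\dbl z\dbr$ (apply $\Gamma_L$-invariance level by level modulo $z^n$, where it reduces to $(L^\sep)^{\Gamma_L}=L$) and since $L^\sep\dbl z\dbr / L\dbl z\dbr$ satisfies fpqc descent modulo each $z^n$, the $\Gamma_L$-invariants of $\Koh^1_\et(\hat{\ul M},A_\nu) \otimes_{A_\nu} L^\sep\dbl z\dbr$ give back $\hat M$.

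The main obstacle is the inductive lifting step in part (a), i.e.\ making sure the mod-$z$ Lang/Hilbert 90 argument actually propagates to all truncations $N/z^n N$ uniformly and assembles into a free $A_\nu$-module in the limit. The subtlety is that $\hat\tau$ is only an isomorphism after inverting $z$ for general $\hat\sigma_\nu$-crystals, so the étaleness hypothesis is used precisely at this point to guarantee that the obstruction class lives in a module on which $\hat\tau - \id$ is surjective; without étaleness the invariants would be smaller than the generic rank and the recovery step in (b) would fail. Once this is in place, full faithfulness and the Galois-descent recovery formula are formal.
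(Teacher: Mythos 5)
The paper does not actually prove this theorem: the proof given is a pointer to \cite[Prop.~6.1]{TW}, \cite[Prop.~1.3.7 and A.4.5]{HartlPSp}, and \cite[Prop.~8.4]{Bor-Har2}, so there is no argument in the text to compare with. Your $z$-adic successive-approximation argument correctly reconstructs the standard proof carried out in those references, and your diagnosis that \'etaleness is needed precisely so that $(\hat\tau-\id)x=y$ is solvable in $N/zN$ is on the mark. Two points deserve tightening. The parenthetical ``equivalently, the vanishing of $\Koh^1(\Gal(L^\sep/L^\sep),\GL_r(L^\sep))$'' is vacuous --- $\Gal(L^\sep/L^\sep)$ is the trivial group --- and ``Lang's theorem over the separably closed field $L^\sep$'' is an imprecise label, since Lang's theorem concerns connected algebraic groups over finite fields. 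What you actually use at the mod-$z$ level is the Artin--Schreier-type fact that the $\hat\tau$-fixed vectors in $N/zN$ form an $\BF_\nu$-subspace $W$ of dimension $r=\rk\hat{\ul M}$ spanning $N/zN$ over $L^\sep$: the defining system $v=A\hat\sigma_\nu(v)$ is finite \'etale of degree $(\#\BF_\nu)^{r}$ over the separably closed $L^\sep$, so $|W|=(\#\BF_\nu)^{r}$, and the $L^\sep$-linear independence of $W$ follows from the usual minimal-relation argument with a $\hat\sigma_\nu$-semilinear operator. Second, when taking the limit you should add a line to justify that the lifted fixed vectors exhaust $N^{\hat\tau_N}$ rather than merely span a free $A_\nu$-submodule of rank $r$: expanding any $\hat\tau_N$-fixed $m$ in the $\hat\tau_N$-fixed $L^\sep\dbl z\dbr$-basis and comparing with $\hat\tau_N(\hat\sigma_\nu^\ast m)$ forces the coordinates into $(L^\sep\dbl z\dbr)^{\hat\sigma_\nu}=\BF_\nu\dbl z\dbr=A_\nu$. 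Part (b) --- the internal $\ul\Hom$ argument for full faithfulness and the level-by-level Galois descent for the recovery formula --- is fine.
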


\begin{proof}
See \cite[Proposition 6.1]{TW} and \cite[Proposition 1.3.7 and A.4.5]{HartlPSp}. Compare also to \cite[Proposition~8.4]{Bor-Har2}.
\\
\end{proof}

\begin{proposition}\label{Prop_RationalTateisexactI}

The first \'etale cohomology functor 

$$
\Koh_{\text{\'et}}^1(-, A_\nu): \textbf{\'Et~$\hat{\sigma}_\nu$-Cryst}(L)\to A_\nu[\Gamma_L]-\text{modules}
$$
is exact.


\end{proposition}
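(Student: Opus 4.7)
The plan is to reduce the exactness of $\Koh_{\text{\'et}}^1(-, A_\nu)$ to faithfully flat descent, using the natural comparison isomorphism provided by \Koh$^1_{\et}(\hat{\ul M},A_\nu)\otimes_{A_\nu} L^\sep\dbl z\dbr \isoto \hat{M}\otimes_{L\dbl z\dbr}L^\sep\dbl z\dbr$ in Theorem~\ref{Thm_EmbeddingOfEtCrystintoGalMods}. Given a short exact sequence $0\to \hat{\ul M}'\to \hat{\ul M}\to \hat{\ul M}''\to 0$ in $\textbf{\'Et~$\hat{\sigma}_\nu$-Cryst}(L)$, I first tensor the underlying exact sequence of free $L\dbl z\dbr$-modules over the DVR $L\dbl z\dbr$ with $L^\sep\dbl z\dbr$. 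Since $L^\sep\dbl z\dbr$ is torsion-free over $L\dbl z\dbr$, it is flat, and we obtain an exact sequence
$$
0\to \hat M'\otimes L^\sep\dbl z\dbr\to \hat M\otimes L^\sep\dbl z\dbr\to \hat M''\otimes L^\sep\dbl z\dbr\to 0
$$
of $\Gamma_L\times\hat\tau$-equivariant $L^\sep\dbl z\dbr$-modules.

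For left exactness, I use that taking $\hat\tau$-invariants (equivalently, forming $\ker(1-\hat\tau)$) is a left-exact functor. Applying it to the sequence above and writing $V:=\Koh_{\text{\'et}}^1(\hat{\ul M},A_\nu)$ (and similarly for $V'$, $V''$) yields an exact sequence $0\to V'\to V\to V''$ of $A_\nu[\Gamma_L]$-modules.

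For right exactness, let $C:=\coker(V\to V'')$, a finitely generated $A_\nu$-module since $V''$ is free of finite rank. I claim $C=0$. Indeed, tensoring the short exact sequence $V'\to V\to V''\to C\to 0$ with the flat $A_\nu$-module $L^\sep\dbl z\dbr$ and invoking the natural isomorphism of Theorem~\ref{Thm_EmbeddingOfEtCrystintoGalMods} identifies $C\otimes_{A_\nu}L^\sep\dbl z\dbr$ with $\coker\bigl(\hat M\otimes L^\sep\dbl z\dbr\to \hat M''\otimes L^\sep\dbl z\dbr\bigr)$, which vanishes by the first paragraph. Now $L^\sep\dbl z\dbr$ is faithfully flat over $A_\nu=\BF_\nu\dbl z\dbr$: it is a local homomorphism of local Noetherian rings (both with uniformizer $z$) that is flat, hence faithfully flat. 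Therefore $C=0$ and $V\to V''$ is surjective.

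The main (and essentially only) technical content is already packaged in Theorem~\ref{Thm_EmbeddingOfEtCrystintoGalMods}; the remaining work is the observation that surjectivity of $1-\hat\tau$ on $\hat M'\otimes L^\sep\dbl z\dbr$, which is the usual obstruction to exactness of an invariants functor, can here be sidestepped entirely by comparing with the underlying $L^\sep\dbl z\dbr$-modules and appealing to faithful flatness. Galois equivariance is preserved throughout since every map in sight commutes with the action of $\Gamma_L$ by naturality.
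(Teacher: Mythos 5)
Your proof is correct and uses essentially the same ingredients as the paper's: the comparison isomorphism of Theorem~\ref{Thm_EmbeddingOfEtCrystintoGalMods}(a), flatness of $L\dbl z\dbr\to L^\sep\dbl z\dbr$, and faithful flatness of $A_\nu\to L^\sep\dbl z\dbr$. The only presentational difference is that you treat left-exactness separately via left-exactness of the $\hat\tau$-invariants functor, whereas the paper obtains the full short exact sequence in one step from the comparison isomorphism and then descends by faithful flatness; your separate left-exactness step is therefore not strictly necessary, but it does no harm.
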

 
\begin{proof}

Given a short exact sequence 

$$
0 \to \ul{\hat{M}}''\to \hat{\ul M}\to \hat{\ul M}'\to 0 
$$
in \textbf{\'Et~$\hat{\sigma}_\nu$-Cryst}(L), we obtain the following exact sequence
$$
0 \to \Koh_{\text{\'et}}^1(\ul{\hat{M}}'',A_\nu)\otimes_{A_\nu}L^\sep\dbl z\dbr \to \Koh_{\text{\'et}}^1(\hat{\ul M},A_\nu)\otimes_{A_\nu}L^\sep\dbl z\dbr\to \Koh_{\text{\'et}}^1(\hat{\ul M}',A_\nu)\otimes_{A_\nu}L^\sep\dbl z\dbr \to 0, 
$$
according to Theorem \ref{Thm_EmbeddingOfEtCrystintoGalMods} a) and faithfully flatness of $L\dbl z \dbr \to L^\sep\dbl z \dbr$. By faithfully flatness of $A_\nu\to A_{\nu,L^\sep}:=L^\sep\dbl z \dbr$, the above short exact sequence descends and yields the following short exact sequence
$$
0 \to \Koh_{\text{\'et}}^1(\ul{\hat{M}}'',A_\nu)\to \Koh_{\text{\'et}}^1(\hat{\ul M},A_\nu)\to \Koh_{\text{\'et}}^1(\hat{\ul M}',A_\nu)\to 0.
$$

\end{proof}



\bigskip

\noindent
\subsubsection{Crystalline realization of $C$-Motives}\label{SubSecCrysRealization}

Let $\nu\in C$ be a place, let $L/\BF_q$ be a field extension, and consider the ring $A_{\nu,L}:=A_\nu\wh\otimes_{\BF_q}L=(\BF_\nu\otimes_{\BF_q}L)\dbl z\dbr$. Let $\ell=\{ \alpha\in L; \alpha^{\# \BF_\nu}=\alpha\}$ be the intersection of $\BF_\nu$ and $L$, i.e. those elements $\alpha\in L$ such that $\alpha^{q^{\deg \nu}}=\alpha$. Let $s$ denote the degree of the field extension $\ell/\BF_q$. Then $\#\ell=q^s$. The scheme $\Spec (A_{\nu,L})$ is the union of its connected components. We write $\Spec A_{\nu,L}=\amalg_{i\in\BZ/s} V(\Fa_{\tilde{\nu_i}})$. The connected component $V(\Fa_{\tilde{\nu_i}}):=\Spec (A_{\nu,L})/\Fa_{\tilde\nu_i}=\Spec A_\nu\wh{\otimes}_\ell L$ corresponds to the ideal $\Fa_{\tilde\nu_i}= ( a\otimes 1-1\otimes a^{q^i} : a \in \ell)$ and $\sigma$ cyclically permutes these components and $\sigma^s$ leaves each of the components $V(\Fa_{\tilde\nu_i})$ stable. Thus the set of connected components is endowed with a free $\BZ/s:=\Gal(\ell/\BF_q)$-action. Here $\tilde\nu_i$ stands for the places of $C_\ell$ lying above $\nu\in C$. Note that when $L$ contains $\BF_\nu$, then $\ell=\BF_\nu$ and $s=\deg\nu$.

\begin{definition}\label{Def-RemCryst'}

In the above situation define the category $\textbf{\'Et~$(\sigma,\nu)$-CrystIso}(L)$ as the category whose objects consist of tuples $(\hat{M},\hat{\tau})$, where $\hat{M}$ is a locally free $A_{\nu,L}$-module and $\tau: \sigma^{\ast}\hat{M}\to \hat{M}$ is an isomorphism. One defines quasi-morphisms in a similar way as in definition \ref{Def_Crystal} and considers them as the set of morphisms of this category. Similarly one may define the categories $\textbf{\'Et~$(\sigma,\nu)$-Cryst}(L)$ and $\textbf{$(\sigma,\nu)$-IsoCryst}(L)$. 

\end{definition}

\begin{proposition}\label{Prop_RedisIsom}

Let $L/\BF_\nu$ be a field extension, and let $\tilde\nu_i$ be as above. The reduction modulo $\Fa_{\tilde\nu_i}$ induces equivalences of categories, which are independent of $i\in\BZ/\deg\nu$
\begin{alignat*}{4}
Red: &\quad\textbf{\'Et~$(\sigma,\nu)$-Cryst}(L)~ && \tilde{\longrightarrow}~ && \textbf{\'Et~$\hat\sigma_\nu$-Cryst}(L) \\
Red: & \quad\textbf{\'Et~$(\sigma,\nu)$-CrystIso}(L)~ && \tilde{\longrightarrow} && \textbf{\'Et~$\hat\sigma_\nu$-CrystIso}(L) \\
Red: & \quad\textbf{$(\sigma,\nu)$-IsoCryst}(L)~ && \tilde{\longrightarrow} && \textbf{$\hat\sigma_\nu$-IsoCryst}(L)\\
& \qquad\ul{\hat{M}}:=(\hat{M},\hat{\tau}) && \!\longmapsto\, &&  Red~\ul{\hat{M}}:=(\hat{M}\slash \Fa_{\tilde\nu_i},\hat{\tau}^{\deg\nu}) 
\end{alignat*}

\end{proposition}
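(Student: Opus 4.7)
The plan is to exploit the product decomposition of $A_{\nu,L}$ together with the cyclic action of $\sigma$ on its factors. Since $L \supseteq \BF_\nu$, one has $\ell = \BF_\nu$ and $s=\deg\nu$, and a canonical splitting $A_{\nu,L} = \prod_{i\in\BZ/s} A_{\nu,L}/\Fa_{\tilde\nu_i}$ with each factor $\cong L\dbl z\dbr$ as $L$-algebra. Consequently every locally free $A_{\nu,L}$-module splits as $\hat M = \bigoplus_i \hat M_i$ with $\hat M_i := \hat M/\Fa_{\tilde\nu_i}\hat M$, and morphisms decompose accordingly. A direct check on the generators $a\otimes 1 - 1\otimes a^{q^i}$ shows $\sigma(\Fa_{\tilde\nu_i}) \subseteq \Fa_{\tilde\nu_{i+1}}$, hence $\sigma$ descends to a map $A_{\nu,L}/\Fa_{\tilde\nu_i} \to A_{\nu,L}/\Fa_{\tilde\nu_{i+1}}$, which under the identifications with $L\dbl z\dbr$ is just the ordinary $q$-Frobenius $\sigma_L$ on $L\dbl z\dbr$ (it is the identity on $z$ and the $q$-power on $L$). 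Therefore $(\sigma^*\hat M)_i \cong \sigma_L^* \hat M_{i-1}$, and the structure isomorphism $\hat\tau$ splits into components $\hat\tau_j\colon \sigma_L^* \hat M_{j-1} \to \hat M_j$, $j\in\BZ/s$.

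Next I would iterate: $\hat\tau^s := \hat\tau \circ \sigma^*\hat\tau \circ \cdots \circ \sigma^{(s-1)*}\hat\tau$, restricted to the $i$-th component, yields a map $\sigma_L^{s*}\hat M_i = \hat\sigma_\nu^* \hat M_i \to \hat M_i$ (an isomorphism, possibly after inverting $z$), which endows $\hat M_i$ with a $\hat\sigma_\nu$-(iso-)crystal structure and defines the functor $\mathrm{Red}_i$ of the proposition. A morphism $f=\bigoplus_j f_j\colon \hat M\to\hat M'$ satisfies $f\circ\hat\tau = \hat\tau'\circ\sigma^*f$ iff on each component $f_j\circ\hat\tau_j = \hat\tau'_j\circ\sigma_L^* f_{j-1}$; since each $\hat\tau_j$, $\hat\tau'_j$ is an isomorphism (possibly after inverting $z$), this recursion determines $f_{j-1}$ uniquely from $f_j$, and tracing around the cycle $s$ times shows that the closing-up condition is precisely the compatibility of $f_i$ with $\hat\tau^s$ and $\hat\tau'^s$, which is the definition of a morphism of $\hat\sigma_\nu$-(iso-)crystals. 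This gives full faithfulness. For essential surjectivity, given $(\hat N, \hat\tau_N)$ I would place $\sigma_L^{j*}\hat N$ on the component $i+j$ for $j=0,\dots,s-1$, take the $\hat\tau_{i+j}$ with $j\neq 0$ to be the canonical identities, and set $\hat\tau_i = \hat\tau_N$ to close the cycle; by construction $\mathrm{Red}_i$ of this object is $(\hat N,\hat\tau_N)$. The same argument applies verbatim in all three variants (etale, crystal-iso, iso-crystal), the only difference being where one requires $\hat\tau_j$ to be a genuine isomorphism versus an isomorphism after inverting $z$. Independence of $i$ is manifest because the $\hat\tau_j$'s themselves supply canonical natural isomorphisms between the functors $\mathrm{Red}_i$ for different $i$.

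The main technical issue is the bookkeeping of indices: one must verify carefully that $\sigma$ sends the $i$-th idempotent to the $(i+1)$-th and that the descended maps between the factors are the \emph{untwisted} $q$-Frobenius $\sigma_L$ on $L\dbl z\dbr$ (rather than a twisted variant), so that $s$-fold iteration produces exactly $\hat\sigma_\nu$. Once this is pinned down, the argument is purely formal and functorial, and the three variants of the equivalence follow simultaneously from the same construction.
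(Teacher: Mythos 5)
Your proposal is correct and follows the same construction as the reference the paper cites (\cite[Proposition~8.5]{Bor-Har2}): split $A_{\nu,L}$ into its $s=\deg\nu$ cyclically permuted factors, decompose $\hat M$ and $\hat\tau$ accordingly, take the $s$-fold iterate on one component for the functor $Red_i$, and build the explicit quasi-inverse by placing $\sigma_L^{j*}\hat N$ on the $(i+j)$-th component with identity transition maps except at the closing step.

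One small direction issue in the full-faithfulness paragraph: you phrase the recursion as determining $f_{j-1}$ from $f_j$, which for \emph{uniqueness} is harmless (since $\sigma_L^*$ is injective on $\Hom$-sets), but for \emph{fullness} you must propagate forward, defining $f_{i+j+1}:=\hat\tau'_{i+j+1}\circ\sigma_L^*f_{i+j}\circ\hat\tau_{i+j+1}^{-1}$ starting from a given $h=f_i$; the backward solve would require untwisting $\sigma_L^*$, which is not generally available when $L$ is imperfect. The forward formula also makes integrality in the crystal variant immediate, since the $\hat\tau_j$ are isomorphisms of lattices. Also note that the distinction between $\textbf{\'Et~$(\sigma,\nu)$-Cryst}$ and $\textbf{\'Et~$(\sigma,\nu)$-CrystIso}$ concerns the morphisms (integral vs.\ up to isogeny), not whether $\hat\tau$ itself is a genuine isomorphism; your final sentence about the three variants slightly misstates this, but the construction is unaffected. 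These are minor points of exposition; the argument is sound.
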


\begin{proof}
This is \cite[Proposition~8.5]{Bor-Har2}.
\end{proof}

\forget{
----------------------------------
\begin{proof}
We construct a quasi-inverse functor to this functor. Let us set $\Fa_i:=\Fa_{\tilde\nu_i}$. Let $\ul{\hat{M}}':=(\hat{M}', \hat{\tau}' : 
(\hat{\sigma}^s)^\ast \hat{M}'[1/z] \to \hat{M}'[1/z])$ be a local $\hat{\sigma}^s$-crystal at $\nu_i$ over $L$.
The quasi-inverse functor sends $\ul{\hat{M}}'$
to the local $\hat{\sigma}$-crystal $\ul{\hat{M}}:=(\oplus_{0\leq j<f} \hat{M}_i^j , \oplus_{0< j \leq f}  \hat{\tau}_i^j)$ at $\nu$
over $L$, where $\hat{M}_i^j := (\hat{\sigma}^\ast)^j \hat{M}'$, $\hat{\tau}_i^j := \id_{\hat{M}_{i}^j}: \hat{\sigma}^\ast \hat{M}_{i}^{j-1} \to \hat{M}_{i}^j$ for $0 < j < s$ and $\hat{\tau}_i^s
:= \hat{\tau}' : \hat{\sigma}^\ast \hat{M}_i^{s-1} = (\hat{\sigma}^\ast)^s \hat{M}_i' \to \hat{M}_i'$. Clearly $Red~\ul{\hat{M}}=\ul{\hat{M}}'$. Therefore we can identify

$$
\CD
\left(\bigoplus_{0\leq j<s}
(\hat{\sigma}^\ast)^j (\hat{M} / \Fa_i\hat{M} ), (\hat{\tau}^s \mod \Fa_i)\oplus \bigoplus_{0<j<s} \id \right)\\
@V{\bigoplus_{0\leq j<s} \hat{\tau}^j \mod \Fa_{i+j}}V{\cong}V\\ 
\left(\bigoplus_{0\leq j<s} \hat{M} /\Fa_{i+j}\hat{M} , \bigoplus_{0\leq j<s} \hat{\tau} \mod \Fa_{i+j}\right) = (\hat{M}, \hat{\tau} ).
\endCD
$$
Note that the morphism $\hat{\tau}^j$ has its poles and zeros away from $V(\Fa_{i+j})$.

\bigskip

\noindent
The isomorphism between the Tate modules follows from the
observation that an element $(x_j )_{j\in \BZ/s\BZ}$ is $\hat{\tau}$-invariant if and only if $x_{j+1} = \hat{\tau}(\sigma^\ast x_j)$ for all $j$ and
$x_i = \hat{\tau}^s ((\hat{\sigma}^\ast)^s x_i)$.

\end{proof}

--------------------------
}

\begin{definition}\label{Def_Gamma_nuI}
Let $\nu\in C$ be a place and let $L/\BF_\nu$ be a field extension. For $\ul\CM:=(\CM,\tau_\CM)$ in $\mot(L)$, let $\hat{M}$ denote the pull back $\CM\otimes_{\CO_{C_L}} A_{\nu,L}$. Sending $\ul\CM$ to $(\hat{M},\tau_\CM\otimes 1)$ and further to $(\hat{M}/\Fa_{\tilde\nu_i},(\tau_\CM\otimes 1)^{\deg\nu})$ defines a functor which is independent of the choice of $i\in\BZ/\deg\nu$

$$
\Gamma_\nu (-): \mot(L)\longto\textbf{$(\sigma,\nu)$-IsoCryst}(L)\longto\textbf{$\hat\sigma_\nu$-IsoCryst}(L).
$$
We call this the \emph{crystalline realization functor at $\nu$}.
If $\nu$ does not lie in the characteristic places $\ul\nu$ the functor produces \'etale crystals
$$
\Gamma_\nu (-): \mot(L)\longto\textbf{\'Et~$(\sigma,\nu)$-Cryst}(L)\longto\textbf{\'Et~$\hat\sigma_\nu$-Cryst}(L).
$$
\end{definition}

\begin{remark}
The above construction at $\nu\in\ul\nu$ is apparently more canonical when we work with the alternative definition of $\mot(L)$, explained in Remark \ref{Rem_AlternativeDefForC-Motives}. As we will see in Section \ref{SectGSht}, this is in fact the case when we work with ($G$-)$C$-motives arising from ($\FG$-)shtukas. This is because there is a preferred canonical section $\Spec L\to \Spec \BF_\ul\nu\to C^n$ given by the characteristic sections (also called legs) of the ($\FG$-)shtuka. 

\end{remark}

\subsubsection{\'Etale realization of $C$-Motives and Tate Conjecture}

When $\nu$ does not lie in the characteristic places $\ul\nu$, assigning the $\Gamma_L$-module $\Koh_{\text{\'et}}^i(\hat{\ul M}, A_\nu)$ to the \'etale crystal $\hat{\ul M}:=\Gamma_\nu(\ul\CM)$, defines a functor

$$
\Koh_{\text{\'et}}^i(-,A_\nu): \mot(L)\;\longto\; \textbf{$A_\nu[\Gamma_L]$-modules},\quad \ul\CM\;\longmapsto\;\Koh_{\text{\'et}}^i(\Gamma_\nu(\ul\CM),A_\nu).
$$
\noindent
Tensoring up with $Q_\nu$ we similarly define

$$
\Koh_{\text{\'et}}^i(-,Q_\nu): \mot(L)\;\longto\; \textbf{$Q_\nu[\Gamma_L]$-modules},\quad \ul\CM\;\longmapsto\;\Koh_{\text{\'et}}^i(\Gamma_\nu(\ul\CM),A_\nu)\otimes_{A_\nu}Q_\nu.
$$

We call the above functor the i'th \'etale realization functor with coefficients in $A_\nu$ (resp. $Q_\nu$). We also use the notation $\omega^\nu(-)$ (resp. $\omega_{Q_\nu}^\nu(-)$) for the first cohomology functor $\Koh_{\text{\'et}}^1(-,A_\nu)$ (resp. $\Koh_{\text{\'et}}^1(-,Q_\nu)$).

 \begin{proposition}\label{Prop_RationalTateisexactII}

The functor $\omega_{Q_\nu}^\nu(-)$ is exact. 
 
 \end{proposition}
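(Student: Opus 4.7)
The plan is to factor $\omega_{Q_\nu}^\nu$ as the composition
\[
\mot(L)\;\xrightarrow{\;\Gamma_\nu\;}\;\textbf{\'Et~$\hat\sigma_\nu$-IsoCryst}(L)\;\xrightarrow{\;H^1_{\text{\'et}}(-,A_\nu)\otimes_{A_\nu}Q_\nu\;}\;Q_\nu[\Gamma_L]\text{-modules}
\]
and to check exactness of each factor. For the right-hand factor, Proposition~\ref{Prop_RationalTateisexactI} already shows that $H^1_{\text{\'et}}(-,A_\nu)$ is exact on $\textbf{\'Et~}\hat\sigma_\nu\textbf{-Cryst}(L)$; after choosing $\hat\tau$-stable $A_\nu$-lattices (which exist in an \'etale iso-crystal) and then tensoring with $Q_\nu$ over $A_\nu$, the flatness of $Q_\nu$ over $A_\nu$ preserves exactness, and the result is independent of the lattices chosen.

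The real content is then the exactness of the crystalline realization $\Gamma_\nu$ at $\nu\notin\ul\nu$. Given a short exact sequence $0\to\ul\CM''\to\ul\CM\to\ul\CM'\to 0$ in $\mot(L)$, the construction of kernel, image, coimage and cokernel recalled in the proof of abelianness of $\mot(L)$ lets us pick representatives $f:\CM''\to\CM$ and $g:\CM\to\CM'(D)$ of locally free sheaves compatible with $\tau$, with $f$ injective and $g$ surjective on the generic fibre, so that the induced sequence $0\to\CM''_\eta\to\CM_\eta\to\CM'_\eta\to 0$ of $Q_L$-vector spaces is exact. Since $\nu\notin\ul\nu$, the formal neighbourhood $\Spec A_{\nu,L}$ sits inside $\dot{C}_L$, so $\tau$ is an isomorphism there; and by choosing a representative of the quasi-morphism $g$ with divisor $D$ supported away from $\nu$, the pullback to $\Spec A_{\nu,L}$ gives maps of locally free $A_{\nu,L}$-modules whose only failure of exactness lies in a possible torsion cokernel, which is killed upon inverting the uniformizer $z$, i.e.\ on tensoring up to $Q_\nu\wh\otimes_{\BF_q}L$.

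Finally, the decomposition
\[
Q_\nu\wh\otimes_{\BF_q}L\;=\;\prod_{i\in\BZ/s}(Q_\nu\wh\otimes_\ell L)/\Fa_{\tilde\nu_i}
\]
recalled in Section~\ref{SubSecCrysRealization} is a finite product of fields and $\sigma$ cyclically permutes the factors; hence the projection onto any one component $(Q_\nu\wh\otimes_\ell L)/\Fa_{\tilde\nu_i}$ preserves exactness. Combined with $\hat\tau:=(\tau_\CM\otimes 1)^{\deg\nu}$ on that component this yields an exact sequence in $\textbf{\'Et~}\hat\sigma_\nu\textbf{-IsoCryst}(L)$, which is exactly $\Gamma_\nu$ applied to the original sequence. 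Composing with the exact cohomology functor produces the desired exactness.

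The main obstacle I anticipate is a bookkeeping one in the first step: one must ensure that the quasi-morphism structure does not secretly introduce extra torsion at $\nu$, so that after representing morphisms by honest sheaf maps the sequence remains exact after localizing at $\nu$ and inverting $z$. The freedom to move the divisor $D$ off $\nu$, together with the fact that $\nu\notin\ul\nu$ places the whole computation in the \'etale locus of $\tau$, is what makes this manageable.
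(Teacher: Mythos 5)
Your proof takes essentially the same route as the paper --- factor $\omega_{Q_\nu}^\nu$ through $\Gamma_\nu$ and then invoke Proposition~\ref{Prop_RationalTateisexactI} --- and in addition supplies the argument for the exactness of $\Gamma_\nu$, which the paper simply asserts. One small caveat: the divisor $D$ in a representative $g\colon\CM\to\CM'(D)$ of a quasi-morphism cannot in general be moved off $\nu$, since the polar locus of $g$ is not a free choice; but this is harmless, because after base change to $A_{\nu,L}$ and inverting the uniformizer $z$ the twist $\CO(D)$ becomes trivial, which is exactly what the remainder of your argument already exploits.
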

 
 \begin{proof}
This follows from exactness of $\Gamma_\nu(-)$ and Proposition~\ref{Prop_RationalTateisexactI}. 
 \end{proof}

\begin{proposition}\label{QHomhasfiniterank}
Let $\ul\CM$ and $\ul\CM'$ be in $\mot(L)$, and let $\nu$ be a place on $C$ different from all the characteristic places $\nu_i$. Then the obvious morphism 
$$
\QHom(\ul\CM,\ul\CM')\otimes_Q Q_\nu\;\longto\; \Hom_{Q_\nu[\Gamma_L](\omega_{Q_\nu}^\nu(\ul\CM),\omega_{Q_\nu}^\nu(\ul\CM'))}
$$ 
is injective. In particular $\dim_Q \QHom(\ul\CM,\ul\CM')\leq \rk \ul\CM \cdot \rk \ul\CM'$. 
\end{proposition}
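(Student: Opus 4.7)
The strategy is to reformulate the injectivity claim in terms of $\tau$-fixed subspaces of a finite-dimensional Hom-space over $Q_L$, and then apply Galois descent. First I would invoke the alternative description of quasi-morphisms over a field from Definition~\ref{DefCatCMotives}: $\QHom(\ul\CM,\ul\CM')$ is canonically $V^\tau$, where $V := \Hom_{Q_L}(\CM_\eta, \CM'_\eta)$ is the $rr'$-dimensional $Q_L$-vector space equipped with the $\sigma$-semi-linear automorphism $\tau\colon f \mapsto \tau_{\CM'} \circ \sigma^*(f) \circ \tau_\CM^{-1}$. Similarly, setting $V_\nu := V \otimes_{Q_L} Q_{\nu,L}$ with $Q_{\nu,L} := \Frac(A_\nu \wh\otimes_{\BF_q} L)$, Proposition~\ref{Prop_RedisIsom} together with the full faithfulness in Theorem~\ref{Thm_EmbeddingOfEtCrystintoGalMods} (in the iso-crystal version, applicable because $\nu \notin \ul\nu$) identifies the target $\Hom_{Q_\nu[\Gamma_L]}(\omega_{Q_\nu}^\nu(\ul\CM), \omega_{Q_\nu}^\nu(\ul\CM'))$ with $V_\nu^\tau$. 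Hence the proposition reduces to proving $V^\tau \otimes_Q Q_\nu \hookrightarrow V_\nu^\tau$.

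The crux is a Galois descent step showing $V^\tau \otimes_Q Q_L \hookrightarrow V$. Suppose $f_1,\ldots,f_n \in V^\tau$ are $Q$-linearly independent, and consider the $Q_L$-subspace of relations $R := \ker\bigl(Q_L^n \to V,\;(b_i) \mapsto \sum b_i f_i\bigr)$. Applying $\tau$ to any relation $\sum b_i f_i = 0$ yields $\sum \sigma(b_i) f_i = 0$, so $R$ is $\sigma$-stable; its $\sigma$-fixed part $R \cap Q^n$ vanishes by hypothesis. In the relevant setting where $L$ is algebraic over $\BF_q$ (e.g.\ a finite field or $\ol\BF_q$), the extension $Q_L/Q$ is (pro-)Galois with group topologically generated by $\sigma$; writing $R = \varinjlim_{L_0}(R \cap Q_{L_0}^n)$ over finite Galois subextensions $\BF_q \subseteq L_0 \subseteq L$ and invoking classical Hilbert~90 for $\GL_n$ gives $R \cap Q_{L_0}^n = 0$ for each $L_0$, hence $R = 0$.

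Finally I would tensor the $Q_L$-linear injection $V^\tau \otimes_Q Q_L \hookrightarrow V$ with the flat base change $Q_L \to Q_{\nu,L}$ to get $V^\tau \otimes_Q Q_{\nu,L} \hookrightarrow V_\nu$, and restrict the source along the flat field inclusion $Q_\nu \hookrightarrow Q_{\nu,L}$, which preserves injectivity. For $v \in V^\tau$ and $a \in Q_\nu$ one computes $\tau(v \otimes a) = \tau(v) \otimes \sigma(a) = v \otimes a$ because $\sigma$ fixes $Q_\nu$, so the image of $V^\tau \otimes_Q Q_\nu$ lies in $V_\nu^\tau$. Combined with the identification of paragraph one, this yields the desired injection, and the dimension bound is then immediate: $\dim_Q \QHom = \dim_{Q_\nu}(\QHom \otimes_Q Q_\nu) \leq \dim_{Q_\nu}\Hom_{Q_\nu}(V,V') = rr'$. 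The main subtlety is the Galois descent step, which is the classical Hilbert~90 argument when $L$ is algebraic; for $L$ transcendental over $\BF_q$ the descent across $Q_L/Q$ would require additional input exploiting the specific $\tau$-action coming from $C$-motives.
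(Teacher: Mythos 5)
Your proof is correct, but you take a genuinely different route from the paper's. The paper obtains injectivity essentially for free from flatness: since $\QHom(\ul\CM,\ul\CM')$ is a $Q$-subspace of the $Q_L$-vector space $\Hom_{Q_L}(\CM_\eta,\CM'_\eta)$ and $Q_\nu/Q$ is flat, the natural map $\QHom(\ul\CM,\ul\CM')\otimes_Q Q_\nu\hookrightarrow \Hom_{Q_L}(\CM_\eta,\CM'_\eta)\otimes_Q Q_\nu$ is injective; the latter then embeds into $\Hom_{Q_{\nu,L}}(\Gamma_\nu(\ul\CM),\Gamma_\nu(\ul\CM'))$ compatibly with the $\tau$-structures, and its $\tau$-compatible part is identified with $\Hom_{Q_\nu[\Gamma_L]}(\omega_{Q_\nu}^\nu(\ul\CM),\omega_{Q_\nu}^\nu(\ul\CM'))$ via Theorem~\ref{Thm_EmbeddingOfEtCrystintoGalMods} --- exactly the identification you also carry out in your first and last steps. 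Where you deviate is in replacing the flatness one-liner with the Hilbert~90/Galois-descent lemma that $V^\tau\otimes_Q Q_L\hookrightarrow V$ (that $Q$-linearly independent $\tau$-invariants of $V=\Hom_{Q_L}(\CM_\eta,\CM'_\eta)$ stay $Q_L$-linearly independent), and then base changing along $Q_L\to Q_{\nu,L}$ and restricting scalars to $Q_\nu$. The descent lemma is true and actually gives slightly more than the proposition asks for (it realizes $\QHom(\ul\CM,\ul\CM')$ as a genuine $Q$-form of a $Q_L$-subspace of $V$), but it is unnecessary overhead for the injectivity claim; and, as you honestly flag at the end, it commits you to $L$ being algebraic over $\BF_q$ so that $\Gal(Q_L/Q)$ is topologically generated by $\sigma$, whereas the paper's flatness argument requires no Galois-cohomological input and no such constraint on the structure of $L/\BF_q$. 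Both routes are valid: yours is more structural and yields a sharper intermediate statement, the paper's is shorter and more elementary.
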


\begin{proof}
Clearly we have an embedding of $\QHom(\ul\CM,\ul\CM')\otimes_Q Q_\nu$ in $\Hom_{Q_L}(\CM_\eta,\CM_\eta')\otimes_Q Q_\nu$. The latter sits inside $\Hom_{Q_{\nu,L}}(\Gamma_\nu(\ul\CM),\Gamma_\nu(\ul\CM'))$ and is compatible with respect to $\tau$ and $\tau'$, as well as $\Gamma_\nu(\tau)$ and $\Gamma_\nu(\tau')$. Here $Q_{\nu,L}$ denotes the total ring of fractions of $A_{\nu,L}$. Therefore we have an embedding 
$$
\QHom(\ul\CM,\ul\CM')\otimes_Q Q_\nu \hookrightarrow \Hom_{\textbf{Cris}}(\Gamma_\nu(\ul\CM),\Gamma_\nu(\ul\CM')) \otimes_{A_\nu} Q_\nu
$$
\noindent
The latter equals $\Hom_{Q_\nu(\Gamma_L)}(\omega_{Q_\nu}^\nu(\ul\CM),\omega_{Q_\nu}^\nu(\ul\CM'))$, see Theorem~\ref{Thm_EmbeddingOfEtCrystintoGalMods}.
\end{proof}

\noindent
The following Theorem can be regarded as an analog for Tate conjecture.

\begin{theorem}\label{Thm_Tate_Conj}
Let $\ul\CM$ and $\ul\CM'$ be $C$-motives over a finite field $L$. Let $\nu$ be a closed point of $C$ away from the characteristic places $\nu_i$. Then there are isomorphisms

$$
\Hom_L(\ul\CM,\ul\CM')\otimes_A A_\nu \tilde{\longrightarrow} \Hom_{{A_\nu}[\Gamma_L]}(\omega^\nu(\ul\CM),\omega^\nu(\ul\CM')) 
$$

(resp.
$$
\QHom_L(\ul\CM,\ul\CM')\otimes_Q Q_\nu \tilde{\longrightarrow} \Hom_{{Q_\nu}[\Gamma_L]}(\omega_{Q_\nu}^\nu(\ul\CM),\omega_{Q_\nu}^\nu(\ul\CM')) ) 
$$

of $A_\nu$-modules (resp. $Q_\nu$-vector spaces). Moreover if $\ul\CM=\ul\CM'$ then the above are ring isomorphisms. 
\end{theorem}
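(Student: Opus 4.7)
The injectivity and the ring-homomorphism property when $\ul\CM=\ul\CM'$ are already contained in Proposition~\ref{QHomhasfiniterank} and the functoriality of $\omega^\nu$, so the plan is to establish surjectivity of the $Q_\nu$-version; the $A_\nu$-version will then follow at the end from a lattice comparison via Theorem~\ref{Thm_EmbeddingOfEtCrystintoGalMods}. A standard reduction to the endomorphism case: set $\ul\CN:=\ul\CM\oplus\ul\CM'$ with its projection idempotents $e_\CM,e_{\CM'}\in\End_L(\ul\CN)$; cutting both sides of the Tate map for $\ul\CN$ by $e_{\CM'}(-)e_\CM$ recovers exactly the two Hom-groups appearing in the theorem, so it suffices to treat $\ul\CM=\ul\CM'$.

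Write $E:=\QEnd_L(\ul\CM)$, $V:=\omega^\nu_{Q_\nu}(\ul\CM)$, and let $\pi\in E$ be the $L$-Frobenius. A direct check shows $\pi\in\CZ(E)$: a morphism $f$ intertwines $\tau_\CM$ with $\tau_\CN$, and since $\sigma^{[L:\BF_q]}=\id$ on $C_L$, it also intertwines $\pi=\tau^{[L:\BF_q]}$. On the Galois side, $\Gamma_L=\Gal(\ol{\BF}_q/L)$ is topologically generated by the geometric Frobenius, whose action on $V$ is exactly $\pi_\nu:=\omega^\nu_{Q_\nu}(\pi)$; continuity of the $\Gamma_L$-action therefore identifies $\End_{Q_\nu[\Gamma_L]}(V)=C_{\End_{Q_\nu}(V)}(\pi_\nu)$. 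I am reduced to proving the equality $E\otimes_Q Q_\nu=C_{\End_{Q_\nu}(V)}(\pi_\nu)$.

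The decisive input, and what I expect to be the main obstacle, is establishing that $\pi_\nu$ acts \emph{semisimply} on $V$. I would do this by passing to a finite extension $L'/L$ over which the characteristic polynomial of $\pi_\nu$ acquires only simple roots, so that a suitable power $\pi_\nu^{[L':L]}$ becomes diagonalizable --- exactly the phenomenon emphasized in the text after Theorem~\ref{ThmPoincare-Weil}, where non-semisimple matrices are restored to semisimplicity after raising to a suitable power. Galois descent along the finite cyclic extension $L'/L$ transports the desired equality back from $L'$ to $L$, using faithful flatness of $L\to L'$ together with exactness of $\Gal(L'/L)$-invariants. Granting semisimplicity, $V$ is a semisimple $Q_\nu[\pi_\nu]$-module, and the bicommutant theorem (Remark~\ref{RemBicom}\,\ref{RemBicomB}) gives $C_{\End(V)}\bigl(C_{\End(V)}(\pi_\nu)\bigr)=Q_\nu[\pi_\nu]$; the centralizer $C_{\End(V)}(\pi_\nu)$ is then a semisimple $Q_\nu$-algebra of dimension $\sum_i n_i^2[D_i:K_i][K_i:Q_\nu]$, where $V\cong\bigoplus_i W_i^{n_i}$ is the isotypic decomposition over $Q_\nu[\pi_\nu]$, $K_i$ is the residue field of $Q_\nu[\pi_\nu]$ at the corresponding maximal ideal, and $D_i=\End_{K_i}(W_i)$.

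To close, I would match this count against $\dim_{Q_\nu}(E\otimes_Q Q_\nu)$ via the Honda-Tate machinery of Section~\ref{Sect_Quasi-isogeny classes and Honda-Tate theory}. The isogeny decomposition of $\ul\CM$ into isotypic components corresponds to the Wedderburn decomposition of $E$ as a product of central simple algebras over the simple factors of $F:=Q[\pi]$, whose local invariants at places of $F$ above $\nu$ are pinned down by Propositions~\ref{PropQIsoClassesI}--\ref{PropQIsoClassesII} and Theorem~\ref{Thm_HT_C-Mot}; the sum of their $Q_\nu$-dimensions will then match $\dim_{Q_\nu}C_{\End(V)}(\pi_\nu)$ exactly, forcing the injection from Proposition~\ref{QHomhasfiniterank} to be an isomorphism. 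The integral $A_\nu$-version follows by a standard lattice comparison: a class in $\QHom_L(\ul\CM,\ul\CM')\otimes_Q Q_\nu$ lies in $\Hom_L(\ul\CM,\ul\CM')\otimes_A A_\nu$ precisely when its image under $\omega^\nu_{Q_\nu}$ carries the integral lattice $\omega^\nu(\ul\CM)$ into $\omega^\nu(\ul\CM')$, and Theorem~\ref{Thm_EmbeddingOfEtCrystintoGalMods} makes this integrality check a local bijection between integral $(\sigma,\nu)$-crystals and $\Gamma_L$-stable $A_\nu$-lattices.
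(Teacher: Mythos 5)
Your proposal does not follow the paper's route, and more importantly it has a fatal logical circularity. The Honda--Tate machinery you invoke to ``match the count against $\dim_{Q_\nu}(E\otimes_Q Q_\nu)$'' --- Propositions~\ref{PropQIsoClassesI}, \ref{PropQIsoClassesII} and Theorem~\ref{Thm_HT_C-Mot} --- is proved in the paper \emph{using} Theorem~\ref{Thm_Tate_Conj}; indeed the proof of Proposition~\ref{PropQIsoClassesI} opens with ``The main ingredient to prove this is the analog of Tate conjecture.'' Likewise Theorem~\ref{Thm_SemisimplicityQEnd}, which you lean on implicitly whenever you treat $E$ as a product of central simple algebras over the factors of $F=Q[\pi]$, has Theorem~\ref{Thm_Tate_Conj} as a key input in establishing the equivalence (b)$\Leftrightarrow$(c). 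The quantity $\dim_Q E$ is exactly what the theorem computes; no independent handle on it is available at this stage, so the bicommutant/dimension count cannot close the gap without presupposing the result.

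There are two further problems with the semisimplicity step. First, semisimplicity of $\pi_\nu$ simply fails in general over a finite $L$ (this is precisely the point the paper makes when it restricts Theorem~\ref{ThmPoincare-Weil} to $\ol\BF_q$), and the base change you propose --- to $L'/L$ with $[L':L]$ a power of $p$ --- changes the motive to $\ul\CM\otimes_L L'$ and the Frobenius to $\pi^{[L':L]}$, so it proves something about $\QEnd_{L'}(\ul\CM_{L'})$, not about $\QEnd_L(\ul\CM)$. Second, because $[L':L]$ is a $p$-power, $\Gal(L'/L)$-invariants is not exact on $Q_\nu$-modules, so the descent ``by faithful flatness and exactness of $\Gal(L'/L)$-invariants'' that you sketch is not available as stated. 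The paper's actual proof avoids semisimplicity entirely: it reduces to $\BF_\nu=\BF_q$ via Proposition~\ref{Prop_RedisIsom}, writes $\Hom_L(\ul\CM,\ul\CM')$ as the kernel of $f\mapsto f\circ\tau_\CM-\tau_{\CM'}\circ f$ on $\Hom_{A_L}(\CM,\CM')$, tensors this kernel sequence with the flat map $A\to A_\nu$ using the identity $A_L\otimes_A A_\nu=A_{\nu,L}$ (which is where finiteness of $L$ enters), and then invokes the full faithfulness of the \'etale realization on \'etale crystals, Theorem~\ref{Thm_EmbeddingOfEtCrystintoGalMods}(b). This proves the integral $A_\nu$-statement first, with the $Q_\nu$-statement following by tensoring --- the reverse of your plan, and without any dimension or semisimplicity input.
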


\forget
{

OLD PROOF USING LANG'S THM:

\begin{proof}

Let us first state the following lemma

\begin{lemma}
Let $\ul\CM$ and $\ul\CM'$ and $\nu$ be as in the above theorem. Then 

$$
\Hom_L(\ul\CM,\ul\CM')\otimes_A A_\nu \tilde{\longrightarrow} \Hom_{{A_\nu}[\Gamma_L]}(\Gamma_\nu(\ul\CM),\Gamma_\nu(\ul\CM')) 
$$

\end{lemma}

\begin{proof}

Consider the following exact sequence

\[
\xymatrix {
0 \longrightarrow \Hom(\ul\CM,\ul\CM') ~~~~~\longrightarrow & \Hom_{A_L} (\CM,\CM')    \longrightarrow \Hom_A\left(\CM,\CM'\right) &\\
& f \mapsto f\circ \tau_\CM-\tau_{\CM'}\circ f & \\
}
\]

Let us set $\ul{\hat{M}}:=\Gamma_\nu (\ul\CM)$ and $\ul{\hat{M}}':=\Gamma_\nu(\ul\CM')$. As $A\to A_\nu$ is flat, we get the following exact sequence

\[
\xymatrix {
0 \longrightarrow \Hom(\ul\CM,\ul\CM')\otimes_A A_\nu ~~~~~\longrightarrow & \Hom_{A_{\nu,L}} (\hat{M},\hat{M}')    \longrightarrow \Hom_{A_\nu}(\hat{M},\hat{M}') &\\
& f \mapsto f\circ \hat{\tau}_{\hat{M}}-\hat{\tau}_{\hat{M}'}\circ f, &
}
\]
\noindent
and thus $\Hom_L(\ul\CM,\ul\CM')\otimes_A A_\nu \cong \Hom(\ul{\hat{M}},\ul{\hat{M}}') $.

\forget
{
\[
\xymatrix {
 \QHom(\ul\CM,\ul\CM')=& \ker \left[\Hom_{L(C)}  (\CM_\eta,\CM_\eta')   \rightarrow \Hom_Q\left(\CM_\eta,\CM_\eta'\right)\right]\\
& f \mapsto f\circ \tau_\CM-\tau_{\CM'}\circ f  \\
}
\]
}

\end{proof}

\noindent
According to Proposition \ref{Prop_RedisIsom} we may assume that $\BF_\nu=\BF_q$. Let us set $\ul{\hat{M}}:=\Gamma_\nu (\ul\CM)$ and $\ul{\hat{M}}':=\Gamma_\nu(\ul\CM')$. Furthermore according to the above lemma it remains to show that

$$
\Hom(\ul{\hat{M}},\ul{\hat{M}}')=\Hom(\omega^\nu(\ul\CM),\omega^\nu(\ul\CM'))
$$

\noindent
Let us fix an isomorphism
$\hat{M}\cong L\dbl z \dbr^r$ (resp. $\hat{M}'\cong L\dbl z \dbr^{r'}$) and write $\tau_{\hat{M}}=T \cdot \hat{\sigma}$ (resp. $\tau_{\hat{M}}=T' \cdot \hat{\sigma}$). Let us further denote by $\rho_{\ul\CM,\nu}:\Gamma_L\to \Aut(\omega^\nu(\ul\CM))\cong \GL_{\rk \ul\CM}(A_\nu)$ (resp. $\rho_{\ul\CM',\nu}:\Gamma_L\to \Aut(\omega^\nu(\ul\CM'))\cong \GL_{\rk \ul\CM'}(A_\nu)$) the corresponding representation of $\Gamma_L$. We have

$$
\Hom(\ul{\hat{M}},\ul{\hat{M}}')=\{\hat{F}\in Mat_{r'\times r}(L\dbl z\dbr); \hat{F}.T=T'.\hat{\sigma}(\hat{F})\}.
$$

\noindent
Extending the underlying field $L$ to the separable closure $L^\sep$, we may write the right hand side in the following way

$$
\{\hat{F}\in Mat_{r'\times r}(L^\sep\dbl z\dbr); \hat{F}.T=T'.\hat{\sigma}(\hat{F}) ~\text{and} ~\phi(\hat{F})=\hat{F}~\text{for every}~ \phi\in \Gamma_L \}
$$

By Lang's theorem there is $\Phi\in \GL_r(L^\sep\dbl z\dbr)$ (resp. $\Phi'\in \GL_r(L^\sep\dbl z\dbr)$) such that $\Phi^{-1} . T = \hat{\sigma} (\Phi)^{-1}$ (resp. $(\Phi')^{-1} . T' = \hat{\sigma} (\Phi')^{-1}$) and therefore we may rewrite the above set in the following way

$$
\CD
\lbrace G\in Mat_{r'\times r}(L^\sep\dbl z\dbr) ; \Phi'G\Phi^{-1}.T=T'.\hat{\sigma} (\Phi'G\Phi^{-1})~\text{and}\\
~~~~~~~~~~~~~~~~~~~~~~~~~~~~~~~~~~~~~~~~~~~\phi(\Phi'G\Phi^{-1})=\Phi'G\Phi^{-1}~\text{for every $\Gamma_L$}\rbrace
\endCD
$$

\noindent
since $\phi(\Phi)^{-1}=\rho_{\ul\CM,\nu}(\phi)^{-1}.\Phi^{-1}$ and $\phi(\Phi')=\Phi'.\rho_{\ul\CM',\nu}(\phi)$ we observe that the above set equals

$$
\CD
\lbrace G\in Mat_{r'\times r}(L^\sep\dbl z\dbr)  ;  \hat{\sigma}(G)  =G~\text{and}~ \rho_{\ul\CM',\nu} . G . \rho_{\ul\CM,\nu}^{-1}=G~\forall \phi\in\Gamma_L\rbrace\\
~~~~~~~~~~~~~~~~~~~~~~~~~~~~~~~~= \lbrace G\in Mat_{r'\times r}(L\dbl z \dbr)~;~  \rho_{\ul\CM',\nu}(\phi) . G . \rho_{\ul\CM,\nu}(\phi)^{-1}=G~\forall \phi\in\Gamma_L\rbrace
\endCD
$$

\end{proof}

MODIFIED PROOF AVOIDING LANG'S THM:

}

\begin{proof}

Without loss of generality we may assume that $\BF_\nu=\BF_q$. For this first observe that we can replace $\BF_q$ by $\ell$ (see subsection \ref{SubSecCrysRealization}), in order to assume that the coefficient ring $(\BF_\nu \otimes_\ell L)\dbl z\dbr$ of local shtukas is the power series ring (and not the product of power series rings); see Proposition \ref{Prop_RedisIsom}. Then one can proceed by replacing $\BF_\nu \otimes_\ell L$ by $L$. Now we claim that there is an isomorphism
$$
\Hom_L(\ul\CM,\ul\CM')\otimes_A A_\nu \tilde{\longrightarrow} \Hom_{L}(\Gamma_\nu(\ul\CM),\Gamma_\nu(\ul\CM')) 
$$
To see this, consider the following exact sequence
\[
\xymatrix @R=0.2pc {
0 \ar[r] & \Hom_L(\ul\CM,\ul\CM') \ar[r] & \Hom_{A_L} (\CM,\CM') \ar[r] & \Hom_A\left(\CM,\CM'\right) &\\
& & f \ar@{|->}[r] & f\circ \tau_\CM-\tau_{\CM'}\circ f. & \\
}
\]
Let us set $\ul{\hat{M}}:=\Gamma_\nu (\ul\CM)$ and $\ul{\hat{M}}':=\Gamma_\nu(\ul\CM')$. As $A\to A_\nu$ is flat, we get the following exact sequence

\[
\xymatrix @R=0.2pc {
0\ar[r] & \Hom_L(\ul\CM,\ul\CM')\otimes_A A_\nu \ar[r] & \Hom_{A_{\nu,L}} (\hat{M},\hat{M}') \ar[r] & \Hom_{A_\nu}(\hat{M},\hat{M}') &\\
& &  f \ar@{|->}[r] & f\circ \hat{\tau}_{\hat{M}}-\hat{\tau}_{\hat{M}'}\circ f, &
}
\]

\noindent
and thus $
\Hom_L(\ul\CM,\ul\CM')\otimes_A A_\nu \cong \Hom_L(\ul{\hat{M}},\ul{\hat{M}}') $. Note that we here use the fact that $A_L\otimes_A A_\nu = A_{\nu,L}$. This is only true when $L$ is a finite field, because $A_{\nu,L}$ is $\nu$-adically complete and the left hand side is not in general. It remains to show that

$$
\Hom_L(\ul{\hat{M}},\ul{\hat{M}}')=\Hom_{A_v[\Gamma_L]}(\omega^\nu(\ul\CM),\omega^\nu(\ul\CM')).
$$
But this follows from Theorem \ref{Thm_EmbeddingOfEtCrystintoGalMods} b).

\end{proof}

\section{The Endomorphism Ring Over Finite Fields}

Throughout this section we assume that the field $L$ is a finite field extension of $\BF_q$.

\begin{definition}\label{Def_FrobEndom}
Let $L/\BF_q$ be a field extension of degree $e$ and let $\ul\CM:=(\CM,\tau)$ be a $C$-motive in $\CM ot_C^\ul\nu(L)$. The \emph{Frobenius quasi-isogeny} $\pi:=\pi_\ul\CM\in \QEnd(\ul\CM)$ is given by
$$
\pi:=\tau \circ (\sigma^\ast) \tau \dots \circ (\sigma^\ast)^{e-1} \tau : \ul\CM=(\sigma^\ast)^e\ul\CM\to\ul\CM
$$
\end{definition}

\begin{proposition} 
Let $\ul \CM:=(\CM,\tau)$ be a $C$-motive over a finite field $L=\BF_{q^e}$ and let $\nu$ be a place on $C$ distinct from characteristic places $\nu_i$. Then
\begin{enumerate}

\item
The action of the topological generator $\Frob_L\colon x\mapsto x^{\#L}$ of $\Gamma_L$ on $\omega_{Q_\nu}^\nu(\ul\CM)$ equals the action of $\pi_\nu^{-1}$. Here $\pi_\nu$ denotes $\omega_{Q_\nu}^\nu(\pi)$.

\item
The image of the continuous morphism $A_\nu[\Gamma_L] \to \End_{A_\nu}(\omega_{Q_\nu}^\nu(\ul\CM))$ equals $A_\nu[\pi_\nu]$.

\end{enumerate}

\end{proposition}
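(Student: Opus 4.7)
The plan is to prove (a) by an explicit matrix calculation in the crystalline realization, and then deduce (b) from (a) together with Cayley--Hamilton.

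For (a), by \PropTateEquiv\ (Proposition \ref{Prop_RedisIsom}) we may assume $\BF_\nu = \BF_q$, so that $\hat\sigma_\nu$ agrees with the $q$-Frobenius $\phi$ acting entry-wise on $L^\sep\dbl z\dbr$, and $\Frob_L = \phi^e$. Choose a basis of $\hat M := \Gamma_\nu(\ul\CM)$ in which $\hat\tau$ is represented by a matrix $T\in \GL_r(L\dbl z\dbr)$. The defining condition $m = \hat\tau(\hat\sigma_\nu^* m)$ for an element of the Tate module $\omega^\nu(\ul\CM) \subset \hat M\otimes_{L\dbl z\dbr} L^\sep\dbl z\dbr$ reads, in coordinates, $m = T\cdot \phi(m)$; equivalently, $\phi(m) = T^{-1} m$. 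Iterating this identity $e$ times yields
\[
\Frob_L(m) \;=\; \phi^e(m) \;=\; \phi^{e-1}(T^{-1})\cdots \phi(T^{-1})\,T^{-1}\, m.
\]
On the other hand, in the same basis the matrix of $(\sigma^*)^i\tau$ is $\phi^i(T)$, so composing the factors in Definition \ref{Def_FrobEndom} gives
\[
\pi_\nu \;=\; T\cdot \phi(T)\cdots \phi^{e-1}(T),
\]
and inverting this product reproduces exactly the matrix computed above. Therefore $\Frob_L$ acts on $\omega^\nu_{Q_\nu}(\ul\CM)$ as $\pi_\nu^{-1}$.

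For (b), since $\Gamma_L \cong \wh\BZ$ is topologically generated by $\Frob_L$, the image of the continuous $A_\nu$-algebra map $A_\nu[\Gamma_L]\to \End_{A_\nu}(\omega^\nu_{Q_\nu}(\ul\CM))$ is the closed $A_\nu$-subalgebra generated by $\Frob_L$, which by (a) equals the closed $A_\nu$-subalgebra generated by $\pi_\nu^{-1}$. Since $\Frob_L$ preserves the integral lattice $\omega^\nu(\ul\CM)$ invertibly, so does $\pi_\nu$; hence $\det(\pi_\nu)\in A_\nu^\times$ and Cayley--Hamilton gives $\pi_\nu^{-1}\in A_\nu[\pi_\nu]$. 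Thus $A_\nu[\pi_\nu^{-1}] = A_\nu[\pi_\nu]$. Finally, $A_\nu[\pi_\nu]$ is a finitely generated $A_\nu$-submodule of the free finite-rank $A_\nu$-module $\End_{A_\nu}(\omega^\nu(\ul\CM))$ and is therefore automatically closed, because $A_\nu$ is a complete discrete valuation ring. Hence the image is exactly $A_\nu[\pi_\nu]$.

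The main obstacle is the bookkeeping in (a): one has to line up the semilinearity convention for $\hat\tau$, the order in which the matrices $\phi^i(T)$ multiply to represent the composition $\tau\circ \sigma^*\tau\circ\cdots\circ (\sigma^*)^{e-1}\tau$, and the iteration of $\phi(m) = T^{-1}m$, in such a way that $\Frob_L$ comes out as $\pi_\nu^{-1}$ rather than $\pi_\nu$. Once this is done, part (b) is essentially formal.
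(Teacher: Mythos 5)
Your proof is correct. For part (a) you take a more hands-on route than the paper: the paper works coordinate-free, invoking the $\Gamma_L\times\hat\tau$-equivariant isomorphism $\Koh^1_{\text{\'et}}(\hat{\ul M},A_\nu)\otimes_{A_\nu}L^\sep\dbl z\dbr\cong\hat M\otimes_{L\dbl z\dbr}L^\sep\dbl z\dbr$ from Theorem~\ref{Thm_EmbeddingOfEtCrystintoGalMods}, and then compares the two descriptions of the action of $\hat\tau^e=\pi$ (as $\id\otimes\phi_L$ on one side and as $\pi\otimes\phi_L$ on the other) on the common space to read off $\Frob_L=\pi_\nu^{-1}$. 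You instead choose a basis, write the Tate-module condition $\phi(m)=T^{-1}m$, and iterate it $e$ times, then compare with the explicit product $\pi_\nu=T\cdot\phi(T)\cdots\phi^{e-1}(T)$. Both are sound; yours is more elementary and self-contained, the paper's argument is shorter once one has internalized the equivariance in Theorem~\ref{Thm_EmbeddingOfEtCrystintoGalMods}. (Incidentally, an earlier draft of the paper proved (a) by a matrix computation like yours but relied on Lang's theorem to trivialize $T$; your version avoids Lang's theorem entirely, which is cleaner.) For part (b) your argument coincides with the paper's density argument, with the welcome addition of the explicit Cayley--Hamilton step showing $\pi_\nu^{-1}\in A_\nu[\pi_\nu]$; the paper glosses over why the negative powers of $\pi_\nu^{-1}$ land in $A_\nu[\pi_\nu]$, so spelling this out is an improvement. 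One small editorial point: you cite \PropTateEquiv, which in this paper's macro table points to an external reference; what you mean is Proposition~\ref{Prop_RedisIsom}.
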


\begin{proof}
\bigskip


a) Let $(\hat{M},\hat{\tau})$ be the crystal associated with $\ul\CM$ under $\Gamma_\nu(-)$. Recall that $\hat{\ul M}$ can be recovered from it's \'etale realization $\Koh_{\text{\'et}}^1(\hat{\ul M},A_\nu)$:

$$
\hat{M} \cong \left(\Koh_{\text{\'et}}^1(\hat{\ul M},A_v)\otimes_{A_v}L^\sep\dbl z\dbr\right)^{\Gamma_L},
$$
see Theorem \ref{Thm_EmbeddingOfEtCrystintoGalMods}, and $\hat{\tau}$ can be recovered as $1\otimes \hat{\sigma}$. Since $\phi_L=\hat{\sigma}^e$ and $\Gamma_L$ acts diagonally on $\Koh_{\text{\'et}}^1(\hat{\ul M},A_\nu)$, we have $\hat{\sigma}^e\otimes\hat{\sigma}^e = \id$ on $\hat{M}$ and thus we see that $\pi_\nu=\phi_L^{-1}$. \\

\bigskip
\forget{
\noindent
PROOF OF a) USING Lang's theorem:\\
a) One can reduce to the case $\BF_\nu=\BF_q$. Let $(\hat{M},\hat{\tau})$ be the crystal associated with $\ul\CM$ under $\Gamma_\nu(-)$. Fix an isomorphism $(\ul{\hat{M}},\tau)\tilde{=}(L \dbl z \dbr^r, T\cdot\hat{\sigma})$.
\forget{
\noindent
Then we have

$$
\pi_\nu=T\sigma(T)\cdots \sigma^{e-1}\cdot T\sigma^e.
$$
}
\noindent
Take $\Phi\in\GL_r(L^\sep\dbl z\dbr)$, invariant under $T\cdot\hat{\sigma}$. We may write

$$
\pi_\nu=\Phi^{-1}T\hat{\sigma}(T)\cdots\hat{\sigma}^{e-1}(T)\Phi=\hat{\sigma}^e(\Phi)^{-1}\cdot \Phi=\left(\Phi\cdot\hat{\sigma}^e(\Phi)\right)^{-1}=\rho_{\ul\CM,\nu}(\phi_L)^{-1}\\
$$ 
------------
}


b) Follows formally from the continuity of the action of $\Gamma$ on $\omega^\nu(\ul\CM)$. Namely, since the target of $\Gamma\to\End_{A_\nu}(\omega^\nu(\ul\CM))$ is finitely generated over $A_\nu$, the $A_\nu$-submodule $A_\nu[\pi_\nu]$ is closed. Since $\phi_L^{-1}$ generates a dense subgroup of $\Gamma_L$, the image of $\Gamma_L$ is contained in the closed submodule $A_\nu[\pi_\nu]$. 

\forget{
PROOF OF b) USING Lang's theorem:\\

b) Write $\Phi\equiv \sum_{i=0}^{n-1}\Phi_i z^i (\mod z^n)$ and let $K=L(\Phi_i; 0\leq i\leq n-1)$. For $\phi\in \Gamma_L$, observe that 
$$
\rho_{\ul\CM,\nu}(\phi)\equiv \left(\sum_{i=0}^{n-1}\Phi_i z^i\right)^{-1}\phi \left(\sum_{i=0}^{n-1}\Phi_i z^i\right) ~(\mod z^n)
$$ 
only depends on the image of $\phi$ under the natural morphism $\Gamma_L\to \Gal(K/L)=\{\phi_L^j;~0\leq j< [K:L]\}$. Therefore $\rho_{\ul\CM,\nu}(\phi)~(\mod z^n)$ lies in $\{\rho_{\ul\CM,\nu}(\phi_L^j);~0\leq j< [K:L]\}=A_\nu[\pi_\nu]/z^nA_\nu[\pi_\nu]$.

As $A_\nu[\rho_{\ul\CM,\nu}(\Gamma_L)]$ and $A_\nu[\pi_\nu]$ coincide modulo $z^n$ and the rings are $z$-adic complete, the statement follows. }

\end{proof}

Fix a $C$-motive $\ul\CM$ over $L$ and a place $\nu$ different from characteristic places $\nu_i$. Consider $F:=Q[\pi]\subseteq E:=\QEnd(\ul\CM)$. Note that $F$ is finitely generated as a $Q$-module by Proposition \ref{QHomhasfiniterank} and write $F:=Q[x]/\mu_\pi$, where $\mu_\pi$ is the minimal polynomial of $\pi$ over $Q$. Note further that according to Theorem \ref{Thm_Tate_Conj} we have $E\otimes_Q Q_\nu\cong E_\nu:=\End_{Q_\nu[\Gamma_L]} (\omega_{Q_\nu}^\nu(\ul\CM))$ and observe that under this isomorphism $F\otimes_Q Q_\nu$ gets identified with image $F_\nu$ of $Q_\nu[\Gamma_L]$ in $E_\nu$. Let $\chi_\nu$ denote the characteristic polynomial corresponding to $\pi_\nu:=\pi\otimes 1$  in $E_\nu$.

\begin{remark}\label{Rem_HomDec}
 Let $\ul\CM$ and $\ul\CM'$ be in $\mot(L)$ and let $V_\nu:=\omega_{Q_\nu}^\nu(\ul\CM)$ and $V_\nu':=\omega_{Q_\nu}^\nu(\ul\CM')$ at a place $\nu\in C$, different from characteristic places $\nu_i$. Furthermore, assume that the corresponding Frobenius endomprphisms $\pi_\nu\in \End_{Q_\nu[\Gamma_L]}(V_\nu)$ and $\pi_\nu'\in \End_{Q_\nu[\Gamma_L]}(V_\nu')$ are semisimple. Consider the decomposition $\chi_\nu=\prod_\mu \mu^{m_\mu}$ (resp. $\chi_\nu'=\prod_\mu \mu^{m_\mu'}$) of the characteristic polynomial $\chi_\nu$ (resp. $\chi_\nu'$) to its irreducible factors and set $K_\mu:=Q_\nu[x]/\mu$. There are   decompositions $V_\nu \cong \bigoplus_\mu (K_\mu)^{m_\mu}$ and $V_\nu' \cong \bigoplus_\mu (K_\mu)^{m_\mu'}$, and therefore

$$
\Hom_{Q_\nu[\Gamma_L]}(V_\nu,V_\nu')\cong\bigoplus_i Mat_{m_\mu'\times m_\mu} (K_\mu).
$$

\end{remark}

\begin{definition}
A $C$-motive $\ul\CM\neq 0$ in $\mot(L)$ is called \emph{simple}, if it has no non-trivial quotient in $\mot(L)$. Furthermore $\ul\CM$ is called \emph{semi-simple}, if it admits a decomposition into a direct sum of simple ones.\\
\end{definition}

The following theorem generalizes Proposition 6.8 and Theorem 6.11 of \cite{Bor-Har1} and as in \cite{Jannsen}, is a key observation for the proof of the semi-simplicity of the category $\mot(\ol\BF_q)$. The proof given bellow is a slight modification of the proof given in \cite{Bor-Har1}.

\begin{theorem}\label{Thm_SemisimplicityQEnd}

Consider the following diagram

\[
\xymatrix {
\mot(L)^\circ \ar@/^-2pc/[drr]_{\End_{Q_\nu[\Gamma_L]}(\omega_{Q_\nu}^\nu(-))} \ar[rr]^{{\QEnd(-)}} & & Q\text{-Algebras}\ar[d]^{-\otimes Q_\nu} & \\
 &  &Q_\nu\text{-Algebras}. \\
&   \\
}
\]

The following statements are equivalent

\begin{enumerate}

\item
$\ul\CM\in\mot(L)$ is semi-simple. 

\item
$E:=\QEnd(\ul\CM)$ is semi-simple. 

\item

$\End_{Q_\nu[\Gamma_L]}(\omega_{Q_\nu}^\nu(\ul\CM))$ is semisimple. 

\item

$F_\nu:=Q_\nu[\pi_\nu]$ is semisimple. 

\item

$F$ is semi-simple.


\end{enumerate}

\end{theorem}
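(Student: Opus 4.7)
\noindent
The strategy is to prove a cycle of implications. Three ingredients underlie the argument: the Tate isomorphism $E\otimes_Q Q_\nu \cong E_\nu$ from Theorem \ref{Thm_Tate_Conj} (where $E_\nu:=\End_{Q_\nu[\Gamma_L]}(V)$ and $V:=\omega_{Q_\nu}^\nu(\ul\CM)$); the fact that $\pi \in Z(E)$, since any quasi-morphism commutes with $\tau$ by the defining commutative diagram and hence with the product $\pi=\tau\circ\sigma^*\tau\circ\cdots\circ(\sigma^*)^{e-1}\tau$, so that $F\subseteq Z(E)$ and $F_\nu=F\otimes_Q Q_\nu\subseteq Z(E_\nu)$; and the finite dimensionality of $E$ over $Q$ from Proposition \ref{QHomhasfiniterank}. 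I would close the cycle as $(a)\Leftrightarrow(b)\Rightarrow(e)\Rightarrow(d)\Rightarrow(c)\Rightarrow(b)$.

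The equivalence $(a)\Leftrightarrow(b)$ is the standard Wedderburn correspondence for finite-length objects in an abelian category: a direct sum decomposition $\ul\CM\cong\bigoplus_i\ul\CM_i^{n_i}$ into pairwise non-isomorphic simples yields $E\cong\prod_i M_{n_i}(D_i)$ by Schur's lemma, while conversely the central idempotents of a semi-simple $E$ split in $\mot(L)$ (which is abelian) and produce the desired direct sum decomposition.

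For $(b)\Rightarrow(e)$, if $E$ is semi-simple then $Z(E)$ is a finite product of fields, and the subring $F\subseteq Z(E)$ is therefore a finite commutative reduced $Q$-algebra, hence semi-simple. For $(d)\Rightarrow(c)$: if $F_\nu$ is semi-simple Artinian, then $V$ is semi-simple as an $F_\nu$-module (any finitely generated module over a semi-simple Artinian ring decomposes isotypically), and since the image of $Q_\nu[\Gamma_L]$ in $\End_{Q_\nu}(V)$ is exactly $F_\nu$, we have $E_\nu=\End_{F_\nu}(V)=C_{\End_{Q_\nu}(V)}(F_\nu)$, which by Schur's lemma is a product of matrix algebras over division rings, hence semi-simple. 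Finally $(c)\Rightarrow(b)$ is faithful flatness: $J(E)$ is nilpotent (as $E$ is finite dimensional), so $J(E)\otimes_Q Q_\nu$ is a nilpotent ideal in $E_\nu\cong E\otimes_Q Q_\nu$, contained in $J(E_\nu)=0$, whence $J(E)=0$ by faithful flatness.

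The crucial remaining step is $(e)\Rightarrow(d)$, which is the main obstacle. Writing $F=Q[x]/(\mu_\pi)$, the assumption is that $\mu_\pi$ is squarefree in $Q[x]$, and we must see that $F_\nu=Q_\nu[x]/(\mu_\pi)$ is also reduced. Distinct irreducible factors of $\mu_\pi$ over $Q$ remain coprime over $Q_\nu$ (since their Bezout identity descends), so the issue reduces to showing that each irreducible factor $\mu_i\in Q[x]$ is separable, equivalently that $F/Q$ is étale. This separability is not formal in characteristic $p$; it reflects the Weil-type nature of $\pi$. I expect to deduce it from the fact that the characteristic polynomial of $\pi_\nu$ on $V$ has coefficients in $Q$ independent of $\nu$ (an analog of the classical rationality of Frobenius characteristic polynomials, provable via the Tate isomorphism and a comparison of crystalline realizations at varying places), combined with the absolute-value constraints on its roots. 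All other steps are formal consequences of Theorem \ref{Thm_Tate_Conj}, faithful flatness, and double centralizer (Remark \ref{RemBicom}); only this separability input is substantive.
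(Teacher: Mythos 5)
Your cycle $(a)\Leftrightarrow(b)\Rightarrow(e)\Rightarrow(d)\Rightarrow(c)\Rightarrow(b)$ contains two genuine gaps. First, in the direction $(b)\Rightarrow(a)$: the claim that splitting central and then primitive idempotents ``produces the desired direct sum decomposition'' only reduces you to showing that a motive $\ul\CN$ with $\QEnd(\ul\CN)$ a division algebra is simple, and this is \emph{not} a formal consequence of Wedderburn theory in an abelian category of finite-length objects. For instance, in representations of the quiver $\bullet\to\bullet$, the representation $k\xrightarrow{\,\id\,}k$ is indecomposable with endomorphism ring $k$ but is not simple. The paper's argument for precisely this point (division algebra $\Rightarrow$ simple) is not formal: it uses the exactness of $\omega^\nu_{Q_\nu}$ (Proposition~\ref{Prop_RationalTateisexactII}), semi-simplicity of $F_\nu$ acting on the Tate module, the Tate isomorphism of Theorem~\ref{Thm_Tate_Conj}, and a $z_\nu$-adic approximation argument to produce a genuine quasi-morphism $g\colon\ul\CM'\to\ul\CM$ with $g\circ f$ a unit. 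You need some version of this ingredient; the abstract category theory alone does not suffice.

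Second, in $(e)\Rightarrow(d)$ you have misdiagnosed what is required, and consequently propose to prove something that is both unnecessary and likely false. You write that the issue ``reduces to showing that each irreducible factor $\mu_i\in Q[x]$ is separable, equivalently that $F/Q$ is \'etale,'' but that equivalence is wrong: $\mu_i$ separable is a sufficient but not a necessary condition for $\mu_i$ to remain squarefree over $Q_\nu$. The correct and much weaker sufficient condition is that $Q_\nu/Q$ be a \emph{separable} field extension (which it is: $Q_\nu$ is the completion of the global function field $Q$ at a place, and a uniformizer $z_\nu$ has valuation $1$, so is not a $p$-th power in $Q_\nu$, giving linear disjointness from $Q^{1/p}$). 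Given that, for any finite field extension $L/Q$ the tensor product $L\otimes_Q Q_\nu$ is reduced, and so $F$ semi-simple implies $F_\nu=F\otimes_Q Q_\nu$ semi-simple. This is exactly what the paper invokes via \cite[Corollaire 7.6/4]{BourbakiAlgebra} for the equivalence $(d)\Leftrightarrow(e)$ (and again for $(b)\Leftrightarrow(c)$, rather than your Jacobson-radical argument, though the latter is fine). In particular there is no need for $\mu_\pi$ to be separable, and your sketch ``deduce it from rationality of the characteristic polynomial plus absolute-value constraints'' would not establish separability of $\mu_\pi$ in any case. Your steps $(a)\Rightarrow(b)$, $(b)\Rightarrow(e)$, $(d)\Rightarrow(c)$ via double centralizer, and $(c)\Rightarrow(b)$ via faithful flatness of $Q\to Q_\nu$ are sound.
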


\begin{proof}
(a) $\Rightarrow$ (b) follows from the fact that for a simple $C$-motive $\ul\CM$ the endomorphism algebra $E=\QEnd(\ul\CM)$ is a division algebra. (b) $\Rightarrow$ (c) follows from \cite[Corollaire 7.6/4]{BourbakiAlgebra} and the fact that $Q_\nu/Q$ is separable. (c) $\Rightarrow$ (d) follows from \cite[Corollaire de Proposition~6.4/9]{BourbakiAlgebra}. As $Q_\nu/Q$ is separable and $F_\nu=F\otimes Q_\nu$, one can argue that $(d)$ and $(e)$ are equivalent by \cite[Corollaire~7.6/4]{BourbakiAlgebra}. Also if $\pi_\nu$ is semi-simple then $\End_{Q_\nu[\Gamma_L]}(\omega_{Q_\nu}^\nu(\ul\CM))=\bigoplus_\mu Mat_{m_\mu\times m_\mu} (K_\mu)$, see Remark \ref{Rem_HomDec}. Thus $(d)$ implies $(c)$. Again we argue that $(c)$ implies $(b)$ by \cite[Corollaire~7.6/4]{BourbakiAlgebra}.\\
It remains to justify $(b)\Rightarrow (a)$. First observe that we can reduce to the this statement that if $\QEnd(\ul\CM)$ is a division algebra, then $\ul\CM$ is simple. To see this, suppose $\QEnd(\ul\CM)=\oplus_{j=1}^m Mat_{r_j\times r_j}(E_j)$ be the decomposition to the matrix algebras over division algebras $E_j$ over $Q$. Let $\{e_{j,i_j}\}_{1 \leq i_j \leq r_j}$ be the corresponding set of idempotents with $\sum e_{j, i_j}=\id_{r_j}\in Mat_{r_j\times r_j}(E_j)$ and $e_{j, i_j} \QEnd(\ul\CM) e_{j, i_j}=E_j$ . Now consider the quasi-isogeny $\sum_{i,j}e_{j,i_j}: \ul\CM \to\oplus_{i,j}  \ul\CM_{i_j}$, where $\ul\CM_{i_j}:=\im e_{j,i_j}$ and observe that $\QEnd(\ul\CM_{i_j})=e_{j,i_j} \QEnd(\ul\CM) e_{j,i_j}=E_j$. \\  
\noindent
Now assume that $E:=\QEnd(\ul\CM)$ is a division algebra. We show that $\ul\CM$ has no non-trivial quotient. Let $\ul\CM'$ be a quotient of $\ul\CM$ under $f:\ul\CM\to \ul\CM'$. Since $E$ is a division algebra, it is enough to show that there is an element $g\in \QHom(\ul\CM',\ul\CM)$ such that $g\circ f\neq 0$. 
\noindent
According to Proposition \ref{Prop_RationalTateisexactII}, the realization functor $\omega_{Q_\nu}^\nu(-)$ is exact, and thus by applying $\omega_{Q_\nu}^\nu(-)$ we get a surjection $$\omega_{Q_\nu}^\nu(f):\omega_{Q_\nu}^\nu(\ul\CM)\twoheadrightarrow \omega_{Q_\nu}^\nu(\ul\CM').$$ Note that as we have seen above $F_\nu$ is semi-simple and thus $\omega_{Q_\nu}^\nu(\ul\CM)$ is a finite semi-simple $F_\nu$-algebra. This implies that there exist 
$$
f_\nu'\in \Hom_{Q_\nu[\Gamma_L]}(\omega_{Q_\nu}^\nu(\ul\CM'),\omega_{Q_\nu}^\nu(\ul\CM))$$ 
such that $f_\nu \circ f_\nu'=\id_{\omega_{Q_\nu}^\nu(\ul\CM')}$. For sufficiently large integer $n$ we have 

$$
h_\nu:= z_\nu^n \cdot f_\nu' \in \Hom(\omega^\nu(\ul\CM'),\omega^\nu(\ul\CM))\cong \Hom_L(\ul\CM',\ul\CM)\otimes_A A_\nu.
$$

\noindent
Here $z_\nu$ is a uniformizer of $A_\nu$. Now take $g \in \Hom_L(\ul\CM',\ul\CM)$ such that $g \equiv h_\nu~(mod~ \nu^m)$ for sufficiently large $m>n$. We claim that $g\circ f \in E^\times$. Namely, if  $g\circ f=0$ then since $f$ is surjective we also have $f\circ g=0$ in  $\QEnd(\ul\CM')$. This implies that 

$$
z_\nu^n \cdot \id_{\omega_{Q_\nu}^\nu(\ul\CM)}=z_\nu^n  \cdot f_\nu \circ f_\nu'=f_\nu \circ h_\nu \equiv f\circ g=0 ~(mod~\nu^m), 
$$
which is a contradiction. Hence we argue that $f$ is also injective.


\end{proof}

\noindent
Let $K$ be a field and $f, g \in K[x]$. Consider the factorizations $f=\prod \mu^{n_\mu}$ and $g=\prod \mu^{m_\mu}$, where $\mu$ runs over irreducible polynomials in $K[x]$. Set $r_K(f,g):=\prod_\mu m_\mu.n_\mu.\deg \mu$.

\begin{proposition}\label{Prop_EndomorphismRingI}
Let $\ul\CM$ be a $C$-motive over a finite field $L=\BF_{q^e}$ with semi-simple Frobenius $\pi$. Then $F=Q[\pi]$ is the center $\CZ(E)$ of the semi-simple $Q$-algebra $E=\QEnd_\ell(\ul\CM)$.   
\end{proposition}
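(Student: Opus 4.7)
The plan is to prove the two inclusions $F\subseteq\CZ(E)$ and $\CZ(E)\subseteq F$ separately.

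For the inclusion $F\subseteq\CZ(E)$, I would argue directly at the level of $C$-motives. Any $f\in E=\QEnd_L(\ul\CM)$ satisfies $f\circ\tau_\CM=\tau_\CM\circ\sigma^{\ast}f$ (after passing to the generic fiber). Iterating this relation $e$ times gives
\[
f\circ\pi \;=\; \pi\circ(\sigma^{\ast})^{e}f\,.
\]
Since $L=\BF_{q^{e}}$ and everything is defined over $L$, the $e$-th power of $\sigma$ acts as the identity on $L$-morphisms, hence $(\sigma^{\ast})^{e}f=f$. Therefore $\pi$ commutes with every element of $E$, i.e. $\pi\in\CZ(E)$, and consequently $F=Q[\pi]\subseteq\CZ(E)$.

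For the reverse inclusion $\CZ(E)\subseteq F$, the strategy is to base-change to $Q_\nu$ via the Tate isomorphism of Theorem \ref{Thm_Tate_Conj} and exploit the explicit matrix description of $E_\nu$ provided by Remark \ref{Rem_HomDec}. Fix a place $\nu\notin\ul\nu$. Tate's theorem gives $E\otimes_{Q}Q_\nu\cong E_\nu:=\End_{Q_\nu[\Gamma_L]}(V_\nu)$, where $V_\nu:=\omega_{Q_\nu}^{\nu}(\ul\CM)$. Under this identification, $F\otimes_{Q}Q_\nu$ becomes $F_\nu=Q_\nu[\pi_\nu]$, which by Theorem \ref{Thm_SemisimplicityQEnd} is the image of $Q_\nu[\Gamma_L]$ in $\End_{Q_\nu}(V_\nu)$. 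Hence $\End_{Q_\nu[\Gamma_L]}(V_\nu)=\End_{F_\nu}(V_\nu)$. Since $\pi_\nu$ is semisimple, Remark \ref{Rem_HomDec} yields
\[
F_\nu\;\cong\;\bigoplus_{\mu}K_\mu \qquad\text{and}\qquad E_\nu\;\cong\;\bigoplus_\mu \Mat_{m_\mu\times m_\mu}(K_\mu),
\]
where $\mu$ runs over the irreducible factors of the minimal polynomial of $\pi_\nu$ and $K_\mu=Q_\nu[x]/(\mu)$.

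From this decomposition the center of $E_\nu$ is visibly $\bigoplus_\mu K_\mu=F_\nu$. Using Remark \ref{RemBicom}(a) (compatibility of centres with base change of algebras by the field extension $Q\to Q_\nu$), we have $\CZ(E)\otimes_{Q}Q_\nu=\CZ(E\otimes_{Q}Q_\nu)=\CZ(E_\nu)=F_\nu=F\otimes_{Q}Q_\nu$. By faithful flatness of $Q\to Q_\nu$, this forces $\CZ(E)=F$ inside $E$, which together with the first inclusion completes the proof. The only mildly subtle step is the identification $\End_{Q_\nu[\Gamma_L]}(V_\nu)=\End_{F_\nu}(V_\nu)$, which relies on the preceding proposition that the image of $A_\nu[\Gamma_L]$ in $\End_{A_\nu}(\omega^\nu(\ul\CM))$ is exactly $A_\nu[\pi_\nu]$; once this is in hand the argument is purely formal.
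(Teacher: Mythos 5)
Your proof is correct and follows the paper's strategy: establish $F\subseteq\CZ(E)$ directly from the commutation of endomorphisms with $\tau$, base-change to $Q_\nu$ via the Tate theorem, compute $\CZ(E_\nu)=F_\nu$, and descend using the compatibility of centres with flat base change. The only small difference is that you read off $\CZ(E_\nu)=F_\nu$ from the explicit matrix decomposition of Remark~\ref{Rem_HomDec}, while the paper invokes the bicommutant theorem (Remark~\ref{RemBicom}\ref{RemBicomB}) for the semi-simple $F_\nu$-module $\omega^\nu(\ul\CM)$; these are interchangeable here.
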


\begin{proof}
Since $F_\nu$ is semisimple, the $F_\nu$-module $\omega^\nu(\ul\cM)$ is semisimple; see Theorem \ref{Thm_SemisimplicityQEnd}. As $\omega^\nu(\ul\CM)$ is finitely generated module over $E_\nu$ which itself is finite dimensional over $Q_\nu$. Therefore we have $Bicom_{F_\nu}(\omega^\nu(\ul\CM))=F_\nu$; see Remark \ref{RemBicom}\ref{RemBicomB}. Hence $\CZ(E_\nu)=E_\nu\cap F_\nu=F_\nu$ and $F\otimes_{Q}Q_\nu=F_\nu=\CZ(E\otimes_QQ_\nu)=\CZ(E)\otimes_QQ_\nu$, see Remark \ref{RemBicom}\ref{RemBicomA}. We conclude that $\dim_Q F=\dim_Q \CZ(E)$. Note that $F\subseteq \CZ(E)$, since for every $f\in E$ we have $f\circ \tau_\CM=\tau_\CM\circ f$ and thus $f\circ \pi=\pi\circ f$ .
\end{proof}

\begin{proposition}
Let $\ul\CM$ and $\ul\CM'$ be $C$-Motives over $L$ and let $V_\nu:=\omega_{Q_\nu}^\nu(\ul\CM)$ and $V_\nu':=\omega_{Q_\nu}^\nu(\ul\CM')$, at a place $\nu\in C$, different from characteristic places $\nu_i$. Assume further that $\pi_\nu \in \End_{Q_\nu[\Gamma]}(V_\nu)$ (resp. $\pi_\nu'\in \End_{Q_\nu[\Gamma]}(V_{\nu'})$) is semisimple. Then the dimension of $\QHom(\ul\CM,\ul\CM')$ as a $Q$-vector space equals $r_{Q_\nu}(\chi_\nu,\chi_\nu')$. 
\end{proposition}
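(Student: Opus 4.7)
The plan is to reduce the $Q$-dimension of $\QHom(\ul\CM,\ul\CM')$ to an explicit computation on the \'etale realizations, which by the semisimplicity hypotheses decomposes into a sum of matrix algebras over the residue fields $K_\mu=Q_\nu[x]/\mu$.

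First I would invoke Theorem~\ref{Thm_Tate_Conj} to obtain the isomorphism
$$
\QHom_L(\ul\CM,\ul\CM')\otimes_Q Q_\nu \;\isoto\; \Hom_{Q_\nu[\Gamma_L]}(V_\nu,V_\nu'),
$$
so that $\dim_Q\QHom_L(\ul\CM,\ul\CM')$ coincides with $\dim_{Q_\nu}\Hom_{Q_\nu[\Gamma_L]}(V_\nu,V_\nu')$ (using that $Q\to Q_\nu$ is faithfully flat and that $\QHom$ is finite-dimensional by Proposition~\ref{QHomhasfiniterank}). The task is then purely on the Galois-module side.

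Next I would use the preceding proposition of Section~3, which shows that the action of $\Gamma_L$ on $V_\nu$ (resp.\ $V_\nu'$) factors through, and indeed generates topologically, the subring $F_\nu=Q_\nu[\pi_\nu]$ (resp.\ $F_\nu'=Q_\nu[\pi_\nu']$). Therefore a $Q_\nu$-linear map $f\colon V_\nu\to V_\nu'$ is $\Gamma_L$-equivariant if and only if it intertwines the actions of $\pi_\nu$ and $\pi_\nu'$, that is, $f\circ\pi_\nu=\pi_\nu'\circ f$.

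Now the semisimplicity of $\pi_\nu$ and $\pi_\nu'$ enters, via Remark~\ref{Rem_HomDec}. Writing $\chi_\nu=\prod_\mu\mu^{m_\mu}$ and $\chi_\nu'=\prod_\mu\mu^{m_\mu'}$ as products of (distinct) irreducibles in $Q_\nu[x]$, the semisimplicity yields $V_\nu\cong\bigoplus_\mu K_\mu^{m_\mu}$ and $V_\nu'\cong\bigoplus_\mu K_\mu^{m_\mu'}$ with $\pi_\nu$, resp.\ $\pi_\nu'$, acting by multiplication by $x$ on each $K_\mu$. Since there is no nonzero intertwiner between $K_\mu$ and $K_{\mu'}$ when $\mu\ne\mu'$, and since the intertwiners from $K_\mu$ to itself are exactly the $K_\mu$-multiplications, one obtains
$$
\Hom_{Q_\nu[\Gamma_L]}(V_\nu,V_\nu')\;\cong\;\bigoplus_\mu \Mat_{m_\mu'\times m_\mu}(K_\mu).
$$

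Finally, taking $Q_\nu$-dimensions and using $[K_\mu:Q_\nu]=\deg\mu$ gives
$$
\dim_{Q_\nu}\Hom_{Q_\nu[\Gamma_L]}(V_\nu,V_\nu')\;=\;\sum_\mu m_\mu\, m_\mu'\,\deg\mu\;=\;r_{Q_\nu}(\chi_\nu,\chi_\nu'),
$$
which is the desired equality. No serious obstacle is anticipated: the content is essentially the Tate-type theorem together with the standard structure of semisimple modules over $Q_\nu[x]$, and the only point requiring some care is the identification of $\Gamma_L$-equivariance with $\pi_\nu$/$\pi_\nu'$-equivariance, which is supplied by the proposition on the image of $A_\nu[\Gamma_L]$ in $\End_{A_\nu}(\omega^\nu_{Q_\nu}(\ul\CM))$.
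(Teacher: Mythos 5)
Your proof is correct and follows essentially the same route as the paper: invoke the Tate theorem (Theorem~\ref{Thm_Tate_Conj}) to pass to the Galois-module side, use semisimplicity to decompose $V_\nu$ and $V_\nu'$ into sums of $K_\mu$'s, and read off the dimension from the resulting sum of matrix algebras (the paper simply cites this decomposition directly, as in Remark~\ref{Rem_HomDec}). The only addition you make is to spell out the step — left implicit in the paper — that $\Gamma_L$-equivariance amounts to intertwining $\pi_\nu$ and $\pi_\nu'$, which is a genuine clarification but not a different argument.
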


\begin{proof}
Consider the decomposition $\chi_\nu=\prod_\mu \mu^{m_\mu}$ (resp. $\chi_\nu'=\prod_\mu \mu^{m_\mu'}$) of the characteristic polynomial $\chi_\nu$ (resp. $\chi_\nu'$) to the irreducible factors and set $K_\mu:=Q_\nu[x]/\mu$. Then decompose $V_\nu \cong \bigoplus_\mu (K_\mu)^{m_\mu}$ and $V_\nu' \cong \bigoplus_\mu (K_\mu)^{m_\mu'}$. We get

$$
\Hom_{Q_\nu[\Gamma_L]}(V_\nu,V_\nu')\cong\bigoplus_i Mat_{m_\mu'\times m_\mu} (K_\mu).
$$ 
\noindent
Now the assertion follows from Theorem \ref{Thm_Tate_Conj}.
\end{proof}

\begin{definition}\label{Def_CM}
We say that $\ul\CM$ has \emph{complex multiplication} if $\QEnd (\ul\CM)$ contains a commutative, semi-simple $Q$-algebra of dimension $\rk \ul\CM$. 

\end{definition}

\begin{proposition}\label{Proph|r}

Let $\ul\CM$ be a $C$-motive of rank $r$ over $L$ with Frobenius endomorphism $\pi$. Set $E:=\QEnd(\ul\CM)$.  Assume that $F = Q[\pi]$ is a field and let $h := [F : Q] = \deg \mu_\pi$. Then 
\begin{enumerate}

\item
$h|r$ and $\dim_Q E = r^2/h$ and $\dim_F E = r^2/ h^2$. 
\item
For any place $\nu$ of $Q$ different from characteristic places $\nu_i$ we have $E\otimes Q_\nu \cong Mat_{r/h\times r/h}(F \otimes_Q Q_\nu)$ and $\chi_\nu = (\mu_\pi)^{r/h}$, independent of $\nu$.

\end{enumerate}
\noindent
In particular if $\ul\CM$ is CM then $F=E=\QEnd(\ul\CM)$ is commutative.

\end{proposition}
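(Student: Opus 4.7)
My plan is to combine the CSA structure of $E$ with the $Q_\nu$-linear structure of the étale realization to pin down everything numerically. Concretely, since $F$ is a field it is semi-simple, so Theorem \ref{Thm_SemisimplicityQEnd} applies and gives that $E$, $F_\nu$ and $\pi_\nu$ are all semi-simple. By Proposition \ref{Prop_EndomorphismRingI} the center $\CZ(E)$ equals $F$, so $E$ is a central simple $F$-algebra; write $\dim_F E = d^2$. The semi-simplicity of $F_\nu = Q_\nu[x]/(\mu_\pi)$ forces $\mu_\pi$ to factor over $Q_\nu$ as a product $\mu_\pi = \prod_i \mu_i$ of pairwise distinct irreducibles, so that $F_\nu \cong \prod_i K_i$ with $K_i := Q_\nu[x]/(\mu_i)$.

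The key step is to compute $E_\nu := E\otimes_Q Q_\nu$ in two ways. On one hand, Theorem \ref{Thm_Tate_Conj} identifies $E_\nu$ with $\End_{Q_\nu[\Gamma_L]}(V_\nu)$ where $V_\nu := \omega^\nu_{Q_\nu}(\ul\CM)$. Since Frobenius acts as $\pi_\nu^{-1}$ and $\pi_\nu$ is invertible, the image of $Q_\nu[\Gamma_L]$ in $\End_{Q_\nu}(V_\nu)$ equals $F_\nu$, and so $E_\nu = \End_{F_\nu}(V_\nu)$. The decomposition $V_\nu \cong \bigoplus_i K_i^{m_i}$ from Remark \ref{Rem_HomDec} (together with $\chi_\nu = \prod_i \mu_i^{m_i}$) then yields $E_\nu \cong \prod_i Mat_{m_i}(K_i)$. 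On the other hand, using the CSA structure, $E_\nu = E\otimes_F F_\nu = \prod_i (E\otimes_F K_i)$, and each factor $E\otimes_F K_i$ is a central simple algebra over $K_i$ of $K_i$-dimension $d^2$. Comparing the two factorizations, $Mat_{m_i}(K_i) \cong E\otimes_F K_i$ forces $m_i^2 = d^2$, i.e.\ $m_i = d$ for every $i$ (and shows incidentally that $E$ is split at every $\nu\notin\ul\nu$). The main technical point is this identification of the two Wedderburn decompositions of $E_\nu$; everything else is dimension bookkeeping.

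Assembling these, $r = \deg\chi_\nu = d\sum_i \deg\mu_i = dh$, so $d = r/h$ and in particular $h\mid r$. The formulas of (a) follow at once: $\dim_F E = d^2 = (r/h)^2$ and $\dim_Q E = h\,d^2 = r^2/h$. Similarly $\chi_\nu = \prod_i \mu_i^{r/h} = \mu_\pi^{r/h}$ is manifestly independent of $\nu$, and $E\otimes_Q Q_\nu \cong \prod_i Mat_{r/h}(K_i) = Mat_{r/h}(F_\nu) = Mat_{r/h}(F\otimes_Q Q_\nu)$, giving (b).

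For the ``in particular'' on CM: let $B\subseteq E$ be commutative semi-simple with $\dim_Q B = r$. First I would show $F\subseteq B$ as follows. Since $F$ is central in $E$, the subalgebra $BF\subseteq E$ is commutative, and it is semi-simple as a quotient of the semi-simple $F$-algebra $B\otimes_{B\cap F} F$; hence $BF$ is étale over $F$. The bound for étale subalgebras of a CSA gives $\dim_F BF \le d$, so $\dim_Q BF \le dh = r$; since $B\subseteq BF$ and $\dim_Q B = r$, this forces $B = BF$ and $F\subseteq B$, with $\dim_F B = d = r/h$. The remaining and main obstacle is to conclude $d = 1$, equivalently $E = F$. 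Here I would pass to $V_\nu$: by Theorem \ref{Thm_Tate_Conj}, $B$ acts faithfully on $V_\nu$, and the equality $\dim_{Q_\nu} B_\nu = r = \dim_{Q_\nu} V_\nu$ makes $V_\nu$ a free $B_\nu$-module of rank one. Combining this with the structure $E_\nu \cong Mat_{r/h}(F_\nu)$ established in (b) and the fact that $B_\nu$ contains $F_\nu$, one forces the matrix size $r/h$ to collapse to $1$, yielding $E = F$.
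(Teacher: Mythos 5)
Your proof of parts (a) and (b) is correct and runs parallel to the paper's argument: both decompose $E_\nu$ along the central idempotents of $F_\nu=\prod_\mu K_\mu$ and compare the two resulting factorizations. The one genuine difference is in how you extract $m_\mu=d$. You invoke Proposition \ref{Prop_EndomorphismRingI} to see that $E$ is a central simple $F$-algebra (you might spell out that a semi-simple algebra whose center is a field is automatically simple), so each $E\otimes_F K_\mu$ is a CSA of degree $d$, and then you match it factor-by-factor against $Mat_{m_\mu}(K_\mu)$ to conclude $m_\mu=d$ directly. The paper instead only uses the surjection $E\otimes_F K_\mu\twoheadrightarrow E_\mu$ to obtain $m_\mu^2\le\dim_F E$, and then recovers equality by the inequality chain $\dim_F E\cdot[F:Q]=\sum_\mu m_\mu^2\deg\mu\le(\dim_F E)\sum_\mu\deg\mu=[F:Q]\dim_F E$. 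Your route is a little slicker because it leans harder on the CSA structure already established; the paper's works without asserting that the two factorizations coincide, only that they map onto one another. Both are fine, and the bookkeeping $r=dh$, $\dim_F E=(r/h)^2$, $\dim_Q E=r^2/h$, $\chi_\nu=\mu_\pi^{r/h}$, $E_\nu\cong Mat_{r/h}(F_\nu)$ is identical.

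On the ``in particular'' about CM: the paper's printed proof does not address this clause at all, so you are attempting something extra. As you yourself note, the final step ``force the matrix size $r/h$ to collapse to $1$'' is not a proof, and there are at least two substantive gaps. First, the claim that $BF$ is \'etale over $F$ uses separability, which is not automatic in characteristic $p$ for commutative semi-simple $Q$-algebras, so the bound $\dim_F BF\le d$ by \'etale-subalgebra theory needs justification. Second, and more seriously, having $V_\nu$ free of rank one over $B_\nu$ with $B_\nu\supseteq F_\nu$ and $E_\nu\cong Mat_{r/h}(F_\nu)$ does not by itself force $r/h=1$: take $\ul\CM=\ul{\mathds{1}}^{\oplus 2}$ over a finite field, so $E=Mat_2(Q)$, $F=Q[\pi]=Q$, $r=2$, $h=1$; then $B=Q\times Q$ embedded diagonally is commutative semi-simple of $Q$-dimension $2=\rk\ul\CM$, $V_\nu\cong B_\nu$ as $B_\nu$-module, yet $E\ne F$. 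This indicates that the CM clause as formulated needs an extra hypothesis (e.g.\ $\ul\CM$ simple and $B$ a field), and in any case cannot follow from the bookkeeping of (a)/(b) alone, so your proposed argument cannot be completed as stated.
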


\begin{proof}

Since $F$ is a field, $\pi_\nu$ is semi-simple, see Theorem \ref{Thm_SemisimplicityQEnd}. Therefore the minimal polynomial $\mu_{\pi_\nu}$ equals $\prod_\mu \mu$, with pairwise different monic irreducible polynomials $\mu\in Q_\nu[x]$. The characteristic polynomial $\chi_\nu$ then equals $\prod_\mu \mu^{m_\mu}$. We have $E_\nu\cong \prod_\mu E_\mu$, where $E_\mu=Mat_{m_\mu\times m_\mu}(K_\mu)$ and $K_\mu=Q_\nu[x]\slash \mu$, see Remark~\ref{Rem_HomDec}. We get $Q_\nu[\pi_\nu]=F_\nu=F\otimes_Q Q_\nu\twoheadrightarrow Q_\nu[x]/(\mu)= K_\mu$ and the surjection
$$
E_\nu\otimes_{F_\nu} K_\mu =(E\otimes_Q Q_\nu)\otimes_{F_\nu}K_\mu=E\otimes_F(F\otimes_QQ_\nu)\otimes_{F_\nu}K_\mu=E\otimes_F K_\mu \twoheadrightarrow E_\mu
$$

\noindent
In particular $m_\mu^2\leq \dim_F E$. Thus

$$
\CD
\dim_F E \cdot [F:Q]=\dim_QE=\dim_{Q_\nu} E_\nu=\sum_\mu m_\mu^2\cdot\deg_x \mu\leq (\dim_F E) \cdot \sum_\mu \deg_x \mu\\
=(\dim_F E)\cdot \deg_x\mu_{\pi_\nu}=[F:Q]\cdot\dim_F E
\endCD
$$
Therefore $m_\mu^2=\dim_F E$ for every $\mu$ and 
$$
r=\deg_x\chi_\nu=\sum_\mu m_\mu \deg_x \mu=\sqrt{\dim_F E}~\cdot~\sum_\mu \deg_x \mu=\sqrt{\dim_F E}~\cdot~[F:Q].  
$$
Therefore $r=m_\mu\cdot h$ and $\dim_F E=r^2/h^2$. Now to see part (b) write $E_\nu\cong\oplus_\mu Mat_{r/h\times r/h}(K_\mu)\cong Mat_{r/h\times r/h}(\oplus_\mu K_\mu)\cong Mat_{r/h\times r/h}(F_\nu)$. 
\end{proof}

\noindent
Let us state the following proposition, regarding the two extreme cases for $F\subseteq E$.

\begin{proposition}\label{Prop_ExtremeCases}

Let $\ul\CM$ be a $C$-motive over $L$ with semisimple Frobenius endomorphism $\pi:=\pi_{\ul\CM}$, i.e. $F=Q[\pi]$ is a product of fields. Let $\nu$ be a place on $C$ apart from characteristic places $\nu_i$. Let $\chi_\nu$ denotes the characteristic polynomial of $\pi_\nu:=\omega^\nu(\pi)$. We  have the following statements

\begin{enumerate}
\item
$F=Q(\pi)$ is the center of the semisimple $Q$-algebra $E=\QEnd_L(\ul\CM)$.
\item
$\rk \ul\CM\leq [E:Q]:=\dim_QE\leq (\rk\ul\CM)^2$
\item
The following are equivalent
\begin{enumerate}

\item[i)] $E=F$
\item[ii)]
$E$ is commutative,
\item[iii)]
$[F:Q]=\rk\ul\CM$ 
\item[iv)]
$[E:Q]=\rk\ul\CM$
\item[v)]
$\chi_\nu$ is product of pairwise different irreducible polynomials in~$Q_\nu[x]$ 
\end{enumerate}

\item

The following are equivalent\begin{enumerate}

\item[i)]
$F=Q$
\item[ii)]
$E\cong Mat_{n\times n}(D), \text{for a division algebra $D$ with center $\CZ(D)=Q$}$ 
\item[iii)] $[F:Q]=1$
\item[iv)]$[E:Q]=(\rk\ul\CM)^2$
\item[v)]$\chi_\nu=\mu^{\rk\ul\CM} \text{for a linear polynomial}~ \mu\in Q_\nu[x]. \text{This is the minimal polynomial $\mu_\pi$}.
$

\end{enumerate}

\end{enumerate}

\end{proposition}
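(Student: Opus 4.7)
\smallskip

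\noindent\emph{Proof proposal.} Part (a) is an immediate consequence of Proposition \ref{Prop_EndomorphismRingI}, which already shows $F = Q[\pi] = \CZ(E)$ whenever the Frobenius is semisimple. The strategy for the remaining three parts is to reduce to the case where $F$ is a field (handled by Proposition \ref{Proph|r}) by decomposing $\ul\CM$ along the primitive idempotents of $F$. Concretely, write $F = \prod_{i=1}^{m} F_i$ as a product of fields; the primitive idempotents $e_i \in F$ are central in $E$ by (a), so $E = \prod_i E_i$ with $E_i := e_iEe_i = \QEnd_L(\ul\CM_i)$ and $\ul\CM \cong \bigoplus_i \ul\CM_i$ in the abelian $Q$-linear category $\mot(L)$. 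On each summand the Frobenius is $\pi_i := e_i\pi$, which generates the field $F_i$ over $Q$, so Proposition \ref{Proph|r} applies. Setting $r_i := \rk \ul\CM_i$ and $h_i := [F_i:Q]$, this gives the divisibility $h_i \mid r_i$ together with the three master formulas $r = \sum_i r_i$, $[F:Q] = \sum_i h_i$, and $[E:Q] = \sum_i r_i^2/h_i$.

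With these formulas in hand, parts (b), (c), (d) become exercises in tracking equality cases. For (b): since $r_i/h_i \geq 1$ one has $[E:Q] = \sum_i r_i \cdot (r_i/h_i) \geq \sum_i r_i = r$, while $[E:Q] \leq r^2$ already follows from Proposition \ref{QHomhasfiniterank} (alternatively from $\sum r_i^2/h_i \leq \sum r_i^2 \leq (\sum r_i)^2$). For (c), the equivalence (i)$\Leftrightarrow$(ii) is immediate from (a); the equivalences (i)$\Leftrightarrow$(iii)$\Leftrightarrow$(iv) are extracted by observing that $\sum h_i = \sum r_i$ or $\sum r_i^2/h_i = \sum r_i$ forces $h_i = r_i$ for every $i$, hence $\dim_{F_i} E_i = 1$, so $E = F$. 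For (v) one uses that semisimplicity of $\pi_\nu$ makes $\mu_{\pi_\nu}$ the product of the distinct irreducible factors of $\chi_\nu$; thus $\chi_\nu$ is squarefree iff $\chi_\nu = \mu_{\pi_\nu}$, iff $r = \deg\mu_{\pi_\nu} = [F_\nu : Q_\nu] = [F:Q]$, which is (iii).

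Part (d) combines the same machinery with the classical structure theory of semisimple algebras. The equivalence (i)$\Leftrightarrow$(iii) is trivial. For (i)$\Rightarrow$(ii): $F = Q$ forces $m=1$, and by (a) the semisimple $Q$-algebra $E$ has center $Q$, hence is central simple, so $E \cong Mat_{n \times n}(D)$ for a division $Q$-algebra $D$ with $\CZ(D) = Q$; conversely (ii) gives $F = \CZ(E) = \CZ(D) = Q$. The implication (i)$\Rightarrow$(iv) follows from Proposition \ref{Proph|r} with $h = 1$, and (iv)$\Rightarrow$(i) is the combinatorial heart: the chain $r^2 = \sum_i r_i^2/h_i \leq \sum_i r_i^2 \leq (\sum_i r_i)^2 = r^2$ must be equalities throughout, which simultaneously forces $h_i = 1$ for all $i$ (from the first inequality) and $m = 1$ (from the second), so $F = F_1 = Q$. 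Finally (v)$\Leftrightarrow$(i) again uses that $\mu_{\pi_\nu}$ has simple roots, so $\chi_\nu = \mu^r$ with $\mu$ linear is equivalent to $\mu_{\pi_\nu} = \mu$ being linear, i.e. $[F:Q] = 1$. The only mildly delicate step in the whole argument is the very first one: verifying that the orthogonal idempotents $e_i \in F \subseteq \CZ(E)$ really do split $\ul\CM$ in $\mot(L)$ and that this splitting is compatible with ranks and Frobenius. Once that reduction is secured, everything else is bookkeeping with the three master formulas.
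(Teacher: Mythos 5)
Your proof is correct, and it takes a genuinely different route from the paper's. The paper proves (b), (c), (d) by passing directly to the place $\nu$: it factors $\chi_\nu=\prod_\mu\mu^{m_\mu}$ into irreducibles over $Q_\nu$, reads off $E\otimes_QQ_\nu\cong\prod_\mu\mathrm{Mat}_{m_\mu\times m_\mu}(Q_\nu[x]/\mu)$ from Remark \ref{Rem_HomDec}, and obtains all statements from the single chain $r=\sum_\mu m_\mu\deg\mu\le\sum_\mu m_\mu^2\deg\mu=[E:Q]\le\bigl(\sum_\mu m_\mu\deg\mu\bigr)^2=r^2$, tracking equality cases. You instead decompose over $Q$ first: split $\ul\CM=\bigoplus_i\ul\CM_i$ along the primitive idempotents of $F$ (legitimate in the abelian category $\mot(L)$ with ranks additive because $\omega$ is exact), reduce each summand to the case of a field $F_i=Q[\pi_i]$ handled by Proposition \ref{Proph|r}, and then track equalities in the corresponding global formulas $r=\sum r_i$, $[F:Q]=\sum h_i$, $[E:Q]=\sum r_i^2/h_i$. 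Both organizations of the bookkeeping are correct; yours has the advantage of making the reduction to the field case explicit and of keeping the arithmetic over $Q$ rather than over $Q_\nu$, at the cost of having to justify the idempotent decomposition (which you rightly flag as the one non-trivial preliminary step, and which is indeed harmless in an abelian $Q$-linear category). The paper's version is terser but leans on Remark \ref{Rem_HomDec} doing more work in one step. One small presentational point: in (d), showing the equivalence with statement (v) including the identification ``$\mu=\mu_\pi$'' uses Cayley–Hamilton in the paper's direction (v)$\Leftarrow$(iii); your argument via ``$\mu_{\pi_\nu}$ has simple roots, so $\chi_\nu=\mu^r$ with $\mu$ linear iff $\mu_{\pi_\nu}$ is linear'' conveys the same content and is fine.
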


\begin{proof}
(a) was proved in Proposition \ref{Prop_EndomorphismRingI}. For (b) let $\chi_\nu=\prod_\mu\mu^{m_\mu}$ with $\mu\in Q_\nu[x]$ irreducible pairwise different. We have the decomposition $E\otimes_Q Q_\nu\cong\prod_\mu Mat_{m_\mu\times m_\mu}(Q_\nu[x]/(\mu))$ and thus
$$
[E:Q]=\dim_Q E=\sum_\mu m_\mu^2\cdot \deg_x \mu
$$
and
$$
\sum_\mu m_\mu\cdot \deg_x \mu=\deg_x\chi_\nu=\dim_{Q_\nu} \omega_Q^\nu(\ul\CM)=\rk \ul\CM.
$$ 
Therefore 

\begin{equation}\label{Eq_InequalityI}
\rk\ul\CM=\sum_\mu m_\mu\cdot \deg_x \mu \leq \sum_\mu m_\mu^2\deg_x \mu=[E:Q]\leq(\sum_\mu m_\mu\deg_x \mu)^2=(\rk\ul\CM)^2.
\end{equation}

Let us prove part c). 
The implications $i)\Leftrightarrow ii)$ follows from (a).  As we have seen above $[E:Q]=\rk\ul\CM$ if and only if $m_\mu=1$ for all $\mu$. This shows that  $$iv)\Leftrightarrow v)\Rightarrow iii)$$  
We know that $\mu_\pi$ is the minimal polynomial of $\pi_\nu:=\omega^\nu(\pi)$ and since divides the characteristic polynomial, we argue that $\mu_\pi=\chi_\nu$. Hence $iii)\Rightarrow v)$. Now if we have $iii)$ then we also have $iv)$, and as $F\subseteq E$ we thus see that $E=F$. Finally $i)$ implies that $E$ is a product of fields therefore we may conclude that $i) \Rightarrow v)$ by Remark \ref{Rem_HomDec}.

\forget{
--------------------------\\
\comment{The previous proof of part c)}

We proceed the proof of part c) in the following way
$$
i)\Leftrightarrow ii)\Rightarrow  iii)\Rightarrow i) \Rightarrow iv)\Leftrightarrow v)\Rightarrow iii).
$$

The first implication follows from (a).  As we have seen above $[E:Q]=\rk\ul\CM$ if and only if $m_\mu=1$ for all $\mu$. This implies that $[F:Q]=\rk\ul\CM$. We know that $\mu_\pi$ is the minimal polynomial of $\pi_\nu:=\omega^\nu(\pi)$ and since divides the characteristic polynomial, we argue that $\mu_\pi=\chi_\nu$. Hence 
$$
[F:Q]=\deg_x\mu_\pi=\deg_x\chi_\nu=\dim_{Q_\nu}\omega^\nu(\ul\CM)=\rk\ul\CM.
$$
 
\noindent 
Let's show that $E=F$ if and only if $[E:Q]=\rk\ul\CM$. We have

$$
\rk\ul\CM=\deg_x \chi_\nu\geq \deg_x \mu_\pi=[F:Q]=[E:Q]\geq\rk\ul\CM,
$$
conversely,
$$
\sum_\mu \deg_x \mu=\deg_x\mu_\pi=\rk\ul\CM=\deg_x\chi_\nu=\sum_{i=1}^n m_\mu\cdot \deg_x \mu
$$
Therefore $m_\mu=1$, for all $\mu$ and thus $[E:Q]=\rk\ul\CM=[F:Q]$, which implies $E=F$.
-----------------------------\\
}

\noindent
It remains to prove $(d)$. If $F=Q$, then by a) the center of $E$ is a field, and therefor ii) follows from Artin-Wedderburn theorem. Conversely if $E\cong Mat_{n\times n}(D)$ then $F=\CZ(E)=\CZ(Mat_{n\times n}(D))=\CZ(D)=Q$. The equivalence $[E:Q]=(\rk\ul\CM)^2 \Leftrightarrow \chi_\nu=\mu_\pi^{\rk\ul\CM}$ follows from \ref{Eq_InequalityI}. Finally, assuming $v)$, we see by semi-simplicity of $\pi$ that $\mu_\pi=\mu$ and therefore
$
[F:Q]=\deg_x\mu_\pi=\deg_x\mu=1. 
$
On the other hand if $\mu=\mu_\pi$ is linear then $\chi_\nu=(\mu_\pi)^{\rk\ul\CM}$ by Cayley-Hamilton. 
   
\end{proof}

\section{Relation to G-Shtukas}\label{SectGSht}

Let $\FG$ be a flat affine group scheme of finite type over the curve $C$ with generic fiber $G$.  
Let $\scrH^1(C,\FG)$ denote the stack whose $S$ points parameterize $\FG$-bundles on $C_S:=C\times_{\BF_q} S$. 



\subsection{$G$-motives and functoriality}

One may endow the category of $C$-motives $\mot(S)$ with a $G$-structure. This leads to the following definition.

\begin{definition}\label{DefGMotives}
Let $\Rep \FG$ denote the category of representations of $\FG$ in finite free $\cO_C$-modules $\CV$.  By a \emph{$\FG$-motive} (resp. \emph{$G$-motive}) over $S$ we mean a tensor functor $\ul \CM_\FG: \Rep \FG \to \mot(S)$ (resp. $\ul \CM_G: \Rep_Q~G \to \mot(S)$). We say that two $\FG$-motives (resp. $G$-motives) are \emph{isomorphic} if they are isomorphic as tensor functors. We denote the resulting category of $\FG$-motives (resp. $G$-motives) over $S$ by $\FG$-$\mot(S)$ (resp. $G$-$\mot(S)$).
\end{definition}

\noindent
Note that the construction of the category
$G$-$\mot(S)$ is functorial both in $G$ and $C$. Namely\\

\begin{enumerate}
\item[-] morphism $\rho: G\to G'$ (resp. $\FG\to\FG'$) induces a functor $\Rep G' \to \Rep G$ (resp. $\Rep \FG' \to \Rep \FG$) and this further induces a functor $\rho_\ast: G\text{-}\mot(S)\to G'\text{-}\mot(S)$ (resp. $\rho_\ast: \FG\text{-}\mot(S)\to \FG'\text{-}\mot(S)$).

\item[-] Suppose we have a morphism $C\to  C'$ of smooth projective geometrically irreducible curves which is of degree $d$. The characteristic places $\ul\nu$ on $C$ induce an n-tuple of characteristic places on $C'$, which we denote by $\ul\nu'$. In addition the push forward functor $f_\ast$ induces a functor from the category of locally free sheaves of rank $r$ over $C_S$ to the category of locally free sheaves of rank $r\cdot d$ over $C_S'$. This further  induces the push forward functor $f_\ast: G\text{-}\mot(S)\to G\text{-}\CM ot_{C'}^{\ul\nu'}(S)$.  \\

\end{enumerate}


\subsection{$G$-Shtukas and functoriality}

Let's now discuss the geometrization of the   category $G$-$\mot(S)$. For this we first recall the following definition of the moduli (stacks) of global $\FG$-shtukas. 

\begin{definition-remark}\label{Def-RemGlobal Sht}
\begin{enumerate}
\item
A \emph{global $\FG$-shtuka} $\ul\CG$ over an $\BF_q$-scheme $S$ is a tuple $(\CG,\ul\charsect,\tauGlob)$ consisting of 
\begin{enumerate}
\item[-]
a $\FG$-bundle $\CG$ over $C_S$, 
\item[-]
an $n$-tuple $\ul\charsect$ of (characteristic) sections and 
\item[-]
an isomorphism $\tauGlob\colon  \s \CG|_{C_S\setminus \Gamma_{\ul\charsect}}\isoto \CG|_{C_S\setminus  \Gamma_{\ul\charsect}}$.

\end{enumerate}

We let $\nabla_n\scrH^1(C,\mathfrak{G})$ denote the stack whose $S$-points parameterizes global $\FG$-shtukas over $S$. Sometimes we will fix the sections $\ul s:=(\charsect_i)_i\in C^n(S)$ and simply call $\ul\CG=(\CG,\tauGlob)$ a global $\FG$-shtuka over $S$.

\forget
{
--------------------------------
\item\label{RemNablaHisIndDM}

It can be shown that the stack $\nabla_n \scrH^1(C,\FG)$ is an ind-Deligne-Mumford stack over $C^n$ which is ind-separated and locally of ind-finite type. Indeed, a choice of faithful representation of $\FG$ in $\SL_r$ induces an ind-structure 
$$
\nabla_n \scrH^1(C,\FG):=\dirlim[\ul\omega]\nabla_n^\ul\omega \scrH^1(C,\FG)
$$ 
such that $\nabla_n^\ul\omega \scrH^1(C,\FG)$ are Deligne-Mumford. Here $\ul\omega$ runs over n-tuple of coweights of $\SL_r$. For the proof see \cite[Theorem 3.15]{AH_Global}. 
----------------------------------
}

\item 
We let $\nabla_n\scrH^1(C,\FG)(S)_Q$ denote the category which has the same objects as $\nabla_n\scrH^1(C,\FG)(S)$, but the set of morphisms is enlarged to \emph{quasi-isogenies} of $\FG$-shtukas. A quasi-isogeny  $f:\ul\CG\to\ul\CG'$ is a commutative diagram

$$
\CD
\CG@>f>>\CG'\\
@A{\tau}AA @AA{\tau'}A\\
\sigma^\ast\CG @>\sigma^\ast f >>\sigma^\ast\CG',
\endCD
$$
defined outside $\Gamma_\ul s \cup D\times_{\BF_q}S$ for a closed subscheme $D\subseteq C$. Note that as a morphism of torsors $f$ is automatically an isomorphism. We denote by $QIsog_S(\ul\CG,\ul\CG')$ the set of quasi-isogenies between $\ul\CG$ and $\ul\CG'$.

\item
We denote by $\nabla_n\scrH^1(C,\FG)^{\ul\nu}$, the formal stack which is obtained by taking the formal completion of the stack $\nabla_n\scrH^1(C,\FG)$ at a fixed $n$-tuple of pairwise different characteristic places $\ul\nu=(\nu_1,\ldots,\nu_n)$ of $C$ in the sense of  \cite[Definition A.12]{Har1}. This means we let $A_{\ul\nu}$ be the completion of the local ring $\CO_{C^n,\ul\nu}$, and we consider global $\FG$-shtukas only over schemes $S$ whose characteristic morphism $S\to C^n$ factors through $\Nilp_{A_\ul\nu}$. We similarly denote by $\nabla_n\scrH^1(C,\FG)^{\ul\nu}(S)_Q$ the category over $S$, with morphisms enlarged to quasi-isogenies.

\end{enumerate}
\end{definition-remark}

\forget
{
----------------------
Let us recall the following feature of the quasi-isogenies between global $\FG$-shtukas. Namely, they have rigidity property, in the sense that a quasi-isogeny lifts over infinitesimal thickenings. The proof was given in \cite[Proposition~5.9]{AH_Local}. Also see \cite{Drinfeld2} for the corresponding fact for p-divisible groups (and abelian varieties).

\begin{theorem}[Rigidity of quasi-isogenies for global $\FG$-shtukas]\label{Thm_Rigidity_G_Sht} Let $\ul\nu$ be a set of n distinct places on $C$. Let $S$ be a scheme in $\Nilp_{\wh\CO_{C^n,\ul\nu}}$ and let $j : \ol S \to S$ be a closed immersion defined by a sheaf of ideals $\CI$ which is locally nilpotent. Let $\ul\CG = (\CG,\tau)$ and $\CG' = (\CG',\tau')$ be two global $\FG$-shtukas over $S$. Then 
$$
QIsog_S(\ul\CG,\ul\CG') \to QIsog_S(j^\ast \ul\CG,j^\ast \ul\CG'),~ f\mapsto j^\ast f 
$$
is a bijection of sets. Moreover, $f$ is an isomorphism at a place $\nu\notin \ul\nu$ if and only if $j^\ast f$ is an isomorphism at $\nu$. 
\end{theorem}

-----------------------------------
}

\begin{remark}\label{RemMotisGLnShtuka}
The category of vector bundles of rank $n$ on $C_S$ can be identified with the category of $\GL_n$-bundles on $C_S$. Consequently, one can identify the category $\nabla_n\scrH^1(C,\GL_n)^\ul\nu(S)_Q$ with the subcategory $C$-motives $\mot(L)^\circ$ of rank $n$. 
\end{remark}

The assignment of the moduli stack $\scrH^1(C,\FG)$ to a group $\FG$, is functorial. In other words a morphism $\rho:\FG\to\FG'$ of algebraic groups gives rise to a morphism 

$$
\CD
\scrH^1(C,\FG)\to\scrH^1(C,\FG'),\\
~~~~~~~~~~~~~~~~~~\CG\mapsto \rho_\ast\CG:=\CG\times_{\FG,\rho} \FG'
\endCD
$$

\noindent
In particular any representation $\rho:\FG\to\GL(\CV)$ induces a natural morphism of $\BF_q$-stacks

\begin{equation}\label{Eq_pushforwardalongrepresentations}
\scrH^1(C,\FG)\to \scrH^1(C,\GL(\CV)),\\\newline
\CG\mapsto \rho_\ast \CG.
\end{equation}

\noindent
Note that by tannakian theory having a $\FG$-bundle over $C_L$ is equivalent with giving a tensor functor $f$ from $\Rep \FG$\ to the category $\Vect_C(L)$ of vector bundles over $C_L$.

Regarding the functoriality of $\scrH^1(C,-)$, the assignment of the moduli stack $\nabla_n\scrH^1(C,\FG)$ of global $\FG$-shtukas to a group $\FG$ is functorial in $\FG$, i.e. a morphism $\FG\to\FG'$ gives rise to the morphism
$$
\rho_\ast(-): \nabla_n\scrH^1(C,\FG)\to \nabla_n\scrH^1(C,\FG').
$$
Given a $\FG$-shtuka $\ul\CG$ and a representation $\ol\rho:G\to \GL(V)$ of the group $G$ on a $Q$-vector space, we may consider a lift $\rho:\FG\to\GL(\CV)$ and use it to push forward $\ul\CG$ to produce a $\GL(\CV)$-shtuka $\rho_\ast(\ul\CG)$ in $\nabla_n\scrH^1(C,\GL(\CV))(L)$. According to Remark \ref{RemMotisGLnShtuka}, $\rho_\ast(\ul\CG)$ can be viewed as a $C$-motive in $\mot(L)$. Note that taking a different lift $\rho'$ of the representation $\ol\rho$, $\rho_\ast'\CG$ and $\rho_\ast\ul\CG$ are canonically isomorphic in $\mot(L)$. This is because $\rho$ and $\rho'$ agree over an open $U\subset C$.
In particular we obtain the following pairing 

\begin{equation}\label{EqPairingGShtandGMot}
\nabla_n\scrH^1(C,\FG)^\ul\nu(L)_Q\times \Rep_Q G \to \mot(L).
\end{equation}
\begin{definition}
The above pairing \ref{EqPairingGShtandGMot} induces a functor 

\begin{equation}\label{Eq_FunctShtToGMot}
\ul\CM_G^{-}: \nabla_n\scrH^1(C,\FG)^\ul\nu(L)_Q \;\longto\; G\text{-}\mot(L), \qquad
\ul\CG\mapsto \ul\CM_G^{\ul\CG}
\end{equation}

\noindent
We say that $\ul\CM_G$ in $G\text{-}\mot(L)$ is \emph{geometric} if it arises from a $\FG$-shtuka $\ul\CG$ via the above functor. Note that this functor may fail to be essentially surjective for general $\FG$.

\end{definition}

\forget{
\begin{remark}\label{RemEqGSht=GMot}
The above pairing is a perfect pairing in the sense that it induces an equivalence of categories $\nabla_n\scrH^1(C,\FG)^\ul\nu(L)_Q \tilde{\to} ~~G\text{-}\mot(L)$. To construct the quasi-inverse functor, choose a lift $\Rep G \to \cM ot^{\ul\nu}(\BF)$, by tannakian formalism this yields a global $\FG$-shtuka $\ul\CG$. One can observe that a different choice for the lift yields a $\FG$-shtuka $\ul\CG'$ that differs by a unique quasi-isogeny $\ul\CG'\to \ul\CG$. 
\end{remark}

\comment{Referee's comment:\\
``Why is it an equivalence? the right-hand side is a category of G-bundles, while the other is that of $\FG$-bundles, so an extension from G-bundles to $\FG$-bundles seems necessary. "\\

I add few lines to illustrate it better, but still can not s sure that the referee will find it very enlightening; maybe you see if we can still explain it better?}

}

\forget{
Recall that a motive which becomes mixed Tate over an algebraic closure, is already mixed Tate over a finite separable extension; see Proposition \ref{Prop_GeomMixedTate areMixedTateOverFiniteExtI} in the appendix A. Similarly a geometric motive $M$ in $DM_{gm}^{eff} (F^\sep)$ comes from a geometric motive over some finite field extension $E/F$. As a consequence of the geometrization of the category $G$-$\mot(S)$, one can immediately see that a similar fact also holds in the category of ($G$-)$C$-motives. 
}

Recall that, for a perfect field $F$ a motive $M$ in the category $DM_{gm}^{eff} (F^\sep)$ of effective geometric motives, see \cite{VSF} for the definition and notation, comes from a geometric motive over some finite field extension $E/F$. As a consequence of the geometrization of the category $G$-$\mot(S)$, one can immediately see that a similar fact also holds in the category of ($G$-)$C$-motives.

\begin{proposition}\label{Prop_GeomMixedareMixedOverAFiniteExtensionII}
Let $F$ be a field over $\BF_q$. Any geometric $G$-$C$-motive $\ul\CM_G$ in $G$-$\mot(F^\sep)$ comes by base change from a $G$-$C$-motive in $G$-$\mot(E)$ for a finite extension $E/F$. 
\end{proposition}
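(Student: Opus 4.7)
The plan is first to unpack what it means for $\ul\CM_G$ to be \emph{geometric}. By definition, there exists a global $\FG$-shtuka $\ul\CG=(\CG,\ul s,\tau)$ over $F^\sep$ such that $\ul\CM_G=\ul\CM_G^{\ul\CG}$ under the functor (\ref{Eq_FunctShtToGMot}). Since the construction $\ul\CG\mapsto\ul\CM_G^{\ul\CG}$ is manifestly compatible with base change, it suffices to show that $\ul\CG$ descends to an $\FG$-shtuka $\ul\CG_E$ over some finite subextension $E/F$ of $F^\sep$; applying the functor to $\ul\CG_E$ then yields the desired $G$-$C$-motive whose base change to $F^\sep$ is $\ul\CM_G$.

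The heart of the argument is a standard finite-presentation / limit descent for the three ingredients of $\ul\CG$. Write $F^\sep=\varinjlim_{E} E$ as a filtered colimit over finite separable subextensions $E/F$. The curve $C_{F^\sep}$ is Noetherian and of finite type over $F^\sep$, and $\FG$ is affine of finite type over $C$. Hence I would argue:
\begin{itemize}
\item The characteristic sections $s_i\colon\Spec F^\sep\to C$ are each determined by finitely many coordinates, so each $s_i$ factors through $\Spec E_i$ for some finite $E_i/F$.
\item The $\FG$-torsor $\CG$ over $C_{F^\sep}$, being a torsor under an affine group scheme of finite type over a quasi-compact quasi-separated base, comes by base change from a torsor $\CG_E$ over $C_E$ for some finite $E/F$. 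This is the usual limit argument: trivialize $\CG$ on a finite \fppf cover, note that the finitely many transition functions involve only finitely many elements of $F^\sep$, and absorb them into a finite subextension.
\item The isomorphism $\tau\colon \sigma^\ast\CG|_{C_{F^\sep}\setminus\Gamma_{\ul s}}\isoto\CG|_{C_{F^\sep}\setminus\Gamma_{\ul s}}$ is again described by finitely many regular functions on an affine open cover, hence also descends to some finite subextension after possibly enlarging $E$.
\end{itemize}
Taking a common finite extension $E/F$ containing all the data above, we obtain a global $\FG$-shtuka $\ul\CG_E=(\CG_E,\ul s_E,\tau_E)$ over $E$ whose base change to $F^\sep$ is canonically isomorphic to $\ul\CG$.

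Finally I would note that the functor $\ul\CM_G^{-}$ in (\ref{Eq_FunctShtToGMot}) is constructed by push-forward along representations $\rho\colon\FG\to\GL(\CV)$, and these push-forwards obviously commute with base change $\Spec F^\sep\to\Spec E$. Therefore the $G$-$C$-motive $\ul\CM_G^{\ul\CG_E}\in G\text{-}\mot(E)$ base-changes to $\ul\CM_G^{\ul\CG}=\ul\CM_G$, proving the proposition.

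The main technical point is the descent of the $\FG$-torsor $\CG$ along the filtered colimit $F^\sep=\varinjlim E$; this is the place where one invokes limit arguments for schemes and stacks of finite presentation (e.g.\ the analogue of \cite[Tag~0B8W]{stacks-project} for torsors under affine groups of finite type), and it is the only step that is not a trivial unraveling of definitions.
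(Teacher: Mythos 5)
Your proof is correct and follows essentially the same spreading-out strategy as the paper: reduce to descending the global $\FG$-shtuka $\ul\CG$ over $F^\sep$ to a finite subextension $E/F$, using that all the data (torsor, characteristic sections, $\tau$) are of finite presentation so they live at some finite level of the filtered colimit $F^\sep=\varinjlim E$. The paper packages this slightly differently — it first uses \fppf-descent to observe that the torsor $\CG$ is a scheme of finite type over $C_{F^\sep}$ and then appeals to the standard limit argument for morphisms of finite-type schemes (and also notes an alternative via the ind-algebraic structure of $\nabla_n\scrH^1(C,\FG)$) — but the underlying idea and the conclusion are the same.
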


\begin{proof}
 
\forget{ 
 The proof goes in a different manner than \ref{Prop_GeomMixedTate areMixedTateOverFiniteExtI}. Namely, this observation can be made by looking at $\nabla_n\scrH^1(C,\FG)$ as a moduli space for motives, regarding equivalence of categories \eqref{EqGSht=GMot}, and then using the existence of ind-algebraic structure on $\nabla_n\scrH^1(C,\FG)$, which we explain in this section, see Theorem \ref{ThmNablaHisIndAlg} below. The proposition immediately follows.\\
 
ALTERNATIVE PROOF:
}

First let us assume that $\ul\CM_G$ arises from $\ul\CG=(\CG,\tau)$ under (\ref{Eq_FunctShtToGMot}) and then observe that any $\FG$-bundle $\CG$ over $C_{F^\sep}$ is a scheme of finite type over $C_{F^\sep}$. To see this, one can take an \fppf-covering $U'\to C_{F^\sep}$ which trivializes the $\FG$-bundle $\CG$. Therefore we observe that $\CG|_{U'}\cong \FG\times_CU'$ is of finite type over $U'$, and since $U'\to C_{F^\sep}$ is an \fppf-cover, we may argue that $\CG$ is of finite type by \fppf-descent \cite[VI 2.7.1]{EGA}. As for $\ul\CG:=(\CG,\tau)$ in $\nabla_n\scrH^1(C,\FG)(F^\sep)$, the morphism $\tau$ is defined between schemes of finite type, we see that $\ul\CG$ comes from a point in $\nabla_n\scrH^1(C,\FG)(E)$ for some finite extension $E/F$. This defines the corresponding $G$-$C$-motive in $G$-$\mot(E)$. Note that one can alternatively prove this statement, using the ind-algebraic structure on  $\nabla_n\scrH^1(C,\FG)$ constructed in \cite[Theorem 3.15]{AH_Global}. 
 
\end{proof}

\subsection{Quasi-Isogenies and Galois Stable Sublattices}\label{Subsec_Global-Local Functor}

Like abelian varieties we can pull back a C-motive along a morphism to the corresponding local iso-crystal. 

\begin{proposition}\label{Prop_Pullback}
Fix a place $\nu$ on $\dot{C}$. Let $\ul\CM$ be a $C$-motive over $L$. Let $\hat{\ul M}$ denote the crystal $\Gamma_\nu(\ul\CM)$ associated to $\ul\CM$ at the place $\nu$.  For a given quasi-isogeny (i.e. a morphism which is generically an isomorphism) $\hat{f}: \hat{\ul M}'\to \hat{\ul M}$ of \'etale crystals, one can construct a unique $C$-motive $\ul\CM'$ with a unique quasi-isogeny $f: \ul\CM'\to\ul\CM$ such that it gives $\hat{f}$ back, after applying the functor $\Gamma_\nu(-)$. We denote $\ul\CM'$ by $\hat{f}^\ast \ul\CM$.

\end{proposition}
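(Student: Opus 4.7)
The plan is to construct $\ul\CM'$ by gluing $\ul\CM$ (away from $\nu$) with $\hat{\ul M}'$ (at $\nu$) along the generic fiber, using faithfully flat descent. More precisely, I would use the fpqc cover of $C_L$ consisting of the open subscheme $U:=C_L\setminus\{\nu\}$ and the formal neighborhood $\Spec A_{\nu,L}$, which intersect along $\Spec Q_{\nu,L}$. Since $\hat f$ is a quasi-morphism of crystals, it is by definition an isomorphism after tensoring with $Q_{\nu,L}$. Thus $\hat f\otimes Q_{\nu,L}$ identifies $\hat M'\otimes_{A_{\nu,L}}Q_{\nu,L}$ with $\hat M\otimes_{A_{\nu,L}}Q_{\nu,L}=\CM\otimes_{\CO_{C_L}}Q_{\nu,L}$, and this gives the descent datum gluing $\CM|_U$ with $\hat M'$ into a coherent sheaf $\CM'$ on $C_L$, together with an inclusion $f\colon\CM'\hookrightarrow\CM\otimes_{\CO_{C_L}}Q_{\nu,L}$ that is an isomorphism on $U$ and restricts to $\hat f$ at $\nu$.

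The next step is to upgrade this to a morphism of $C$-motives. The condition $\hat f\circ\hat\tau_{\hat M'}=\hat\tau_{\hat M}\circ\hat\sigma^\ast\hat f$ matches, on the overlap $\Spec Q_{\nu,L}$, with the already defined $\tau_\CM$ on $U$. Since both $\tau_{\CM}|_{U\setminus\ul\nu}$ and $\hat\tau_{\hat M'}$ are isomorphisms on their respective overlaps with the generic fiber, one obtains by fpqc descent a unique isomorphism $\tau_{\CM'}\colon\sigma^\ast\dot{\CM'}\isoto\dot{\CM'}$ making $f$ a morphism in $\mot(L)$. Because $f$ is an isomorphism away from $\nu$ and becomes an isomorphism on $Q_{\nu,L}$, it is in fact a quasi-morphism and, being fiberwise invertible on the generic fiber, a quasi-isogeny.

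To finish I would verify that $\CM'$ is locally free of the correct rank. Away from $\nu$ this is inherited from $\CM$; at $\nu$ it follows from the fact that $\hat M'$ is free over $A_{\nu,L}=(\BF_\nu\otimes_{\BF_q}L)\dbl z\dbr$ (which is a product of complete discrete valuation rings), using that local freeness over $\CO_{C_L,\nu}$ can be checked after the faithfully flat completion $\CO_{C_L,\nu}\to A_{\nu,L}$. Applying $\Gamma_\nu$ to $f$ recovers $\hat f$ by construction, since $\Gamma_\nu$ is precisely the functor of completing at $\nu$.

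Uniqueness of $\ul\CM'$ and $f$ is the easiest part: any other candidate $(\ul\CM'',f'')$ with the same image under $\Gamma_\nu$ agrees with $(\ul\CM',f)$ on $U$ (since $f$ and $f''$ must coincide with $f|_U$ as isomorphisms into $\CM|_U$) and on $\Spec A_{\nu,L}$ (by hypothesis), hence coincides with it globally by faithfully flat descent. The main obstacle I anticipate is bookkeeping around the splitting of $A_{\nu,L}$ into the product $\prod_i A_{\nu,L}/\Fa_{\tilde\nu_i}$ recalled in Section~\ref{SubSecCrysRealization}, in order to make sure that the reduction-mod-$\Fa_{\tilde\nu_i}$ data used in Definition~\ref{Def_Gamma_nuI} really determines the $A_{\nu,L}$-lattice on the nose; this is handled by the equivalence in Proposition~\ref{Prop_RedisIsom}, which allows us to lift $\hat f$ canonically to a morphism over all of $\Spec A_{\nu,L}$ before carrying out the gluing.
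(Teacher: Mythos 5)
Your proposal is correct and follows essentially the same route as the paper: both arguments glue $\CM|_{(C\setminus\{\nu\})_L}$ with $\hat M'$ along the punctured formal neighborhood $\Spec Q_{\nu,L}$ via Beauville--Laszlo/fpqc descent, extend $\tau$ over the same cover, and invoke Proposition~\ref{Prop_RedisIsom} to pass between the $\hat\sigma_\nu$-crystal and the $(\sigma,\nu)$-crystal over all of $\Spec A_{\nu,L}$.
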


\begin{proof}
Recall that for a noetherian ring $A$, the $I$-adic completion $\hat{A}$ of $A$, with respect to an ideal $I\subset A$, is flat over $A$. Thus the natural maps 
$$
(C \setminus \{\nu\})\sqcup \Spec(\hat{\CO}_{C,\nu}) \to C ~\text{and}~(C \setminus \{\nu\})_L\sqcup \Spec(\hat{\CO}_{C,\nu}\hat{\otimes} L) \to C_L
$$
are \fpqc-covers. Let $z_\nu$ be a uniformizer of $\hat{\CO}_{C,\nu}$. Consider the following Cartesian diagram
$$
\CD
\Spec(\hat{\CO}_{C,\nu}\hat{\otimes} L[1/z_\nu])@>j>> (C\setminus \{\nu\})_L\\
@VVV @VVV\\
\Spec(\hat{\CO}_{C,\nu}\hat{\otimes}L) @>>>C_L.
\endCD
$$
By the \fpqc-descent of quasi-coherent sheaves, to give a vector bundle $\CM$ over $C_L$ is the same as to give a triple $(\CN,\hat{M},\iota)$, consisting of a vector bundle $\CN$ over $(C \setminus\{\nu\})_L$, a finite projective $\hat{\CO}_{C,\nu}\hat{\otimes}L$-module $\hat{M}$ and an isomorphism $\iota : j^\ast \CN \to \dot{\hat{M}}$, where the latter means the sheaf associated to the module $\hat{M}[1/z_\nu]$. For a given vector bundle $\CM$ over $C_L$ and a quasi-isogeny (i.e. a morphism which is generically isomorphism) $\hat{f}: \hat{M}'\to \hat{M}$ of finite projective $\hat{\CO}_{C,\nu}\hat{\otimes} L$-modules, the triple $(\CM|_{(C\setminus\{\nu\})_L},\hat{M}',(\hat{f}[1/z_\nu])^{-1}\circ \iota)$ and the map $(\id,\hat{f})$ define a unique vector bundle $\CM'$ and a unique quasi-isogeny $f : \CM' \to \CM$ such that $f$ is isomorphism outside $\nu$ and gives $\hat{f}$ back after passing to the completion at $\nu$.
Similarly, for a $C$-motive $\CM$ over $L$, the map $\tau$ of $\ul\CM:=(\CM,\tau)$ can be interpreted as a map between triples 
$$
(\sigma^\ast \CM|_{(C\setminus\{\nu,\ul\nu\})_L},\sigma^\ast \hat{M}, \sigma^\ast\iota)\to (\CM|_{(C\setminus\{\nu,\ul\nu\})_L}, \hat{M},\iota).
$$
Thus, for a given morphism $\hat{f}: \ul{\hat{M}}' \to \ul{\hat{M}}$ of the category $\textbf{\'Et~$(\sigma,\nu)$-Cryst}(L)$, the vector bundle $\CM'$ can be equipped with an isomorphism $\tau':\sigma^\ast\CM'|_{\dot{C}_L}\to\CM'|_{\dot{C}_L}$. This yields a $C$-motive $\ul\CM'=(\CM',\tau')$ over $L$, and moreover, the map $f$ gives rise to a quasi-isogeny $f: \ul\CM'\to\ul\CM$ of $C$-motives. By construction the quasi-isogeny $f$ gives $\hat{f}$ after applying $\Gamma_\nu(-)$. Now we may conclude by Proposition \ref{Prop_RedisIsom}. 
\end{proof}


\begin{corollary}\label{Cor_Qiso&SL}
Fix a place $\nu$ on $\dot{C}$. Let $\phi:\ul\CM' \to \ul\CM$ be a quasi-isogeny of $C$-motives in $\mot (L)$. Then $\omega_Q^\nu(\phi)$ identifies $\omega^\nu(\ul\CM')$ with a $\Gamma_L$-stable sublattice of $\omega_Q^\nu(\ul\CM)$. This gives a one to one correspondence between  the following sets

$$
\{ \text{quasi-isogenies $\ul\CM'\to\ul\CM$ in $\mot(L)$ which are isomorphisms above $\ul\nu$} \}  
$$

and

$$
\{\text{$\Gamma_L$-stable sublattice $\Lambda_\nu \subseteq \omega_Q^\nu(\ul\CM)$ which are contained in $\omega^\nu(\ul\CM)$} \}.
$$
\end{corollary}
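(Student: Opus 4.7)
The plan is to combine exactness and full faithfulness of the \'etale realization with the pull-back construction of Proposition~\ref{Prop_Pullback}. For the first assertion, I would first apply the crystalline realization $\Gamma_\nu$ to $\phi$; since $\nu\notin\ul\nu$ this yields a morphism of \'etale $\hat\sigma_\nu$-crystals which becomes an isomorphism after inverting $z_\nu$. Applying $\omega^\nu=\Koh_{\text{\'et}}^1(-,A_\nu)$, which is fully faithful by Theorem~\ref{Thm_EmbeddingOfEtCrystintoGalMods}(b) and exact by Proposition~\ref{Prop_RationalTateisexactI}, produces an injection $\omega^\nu(\phi)\colon\omega^\nu(\ul\CM')\hookrightarrow\omega^\nu(\ul\CM)$ of $A_\nu[\Gamma_L]$-modules whose image, viewed inside $\omega_Q^\nu(\ul\CM)$, is a full-rank $\Gamma_L$-stable $A_\nu$-sublattice contained in $\omega^\nu(\ul\CM)$. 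This gives the forward map in the correspondence.

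For the reverse direction, given a $\Gamma_L$-stable $A_\nu$-sublattice $\Lambda_\nu\subseteq\omega^\nu(\ul\CM)$, I would first manufacture an \'etale $\hat\sigma_\nu$-crystal whose \'etale realization is $\Lambda_\nu$ via the recipe of Theorem~\ref{Thm_EmbeddingOfEtCrystintoGalMods}:
\[
\hat M':=\bigl(\Lambda_\nu\otimes_{A_\nu}L^\sep\dbl z\dbr\bigr)^{\Gamma_L},\qquad \hat\tau':=\id\otimes\hat\sigma_\nu.
\]
The inclusion $\Lambda_\nu\hookrightarrow\omega^\nu(\ul\CM)$ then corresponds, via the fully faithful functor $\omega^\nu$, to a quasi-isogeny $\hat f\colon\hat{\ul M}'\to\Gamma_\nu(\ul\CM)$ of \'etale crystals. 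Proposition~\ref{Prop_Pullback} supplies a unique $C$-motive $\ul\CM':=\hat f^\ast\ul\CM$ together with a unique quasi-isogeny $\phi\colon\ul\CM'\to\ul\CM$ in $\mot(L)$ whose image under $\Gamma_\nu$ is $\hat f$; by the construction in that proposition, $\ul\CM'$ is obtained from $\ul\CM$ by an \fpqc-descent modification supported at $\nu$, so $\phi$ is an isomorphism at every place of $C$ distinct from $\nu$, and in particular above every place of $\ul\nu$.

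Finally, that the two assignments are mutually inverse follows from the uniqueness clauses in Proposition~\ref{Prop_Pullback} and Theorem~\ref{Thm_EmbeddingOfEtCrystintoGalMods} combined with the full faithfulness of $\omega^\nu$: starting from such a $\phi$ and running the reverse construction on the sublattice $\omega^\nu(\phi)(\omega^\nu(\ul\CM'))$ produces a quasi-isogeny whose image under $\Gamma_\nu$ agrees with $\Gamma_\nu(\phi)$, and hence agrees with $\phi$ by the uniqueness in Proposition~\ref{Prop_Pullback}; conversely, starting from $\Lambda_\nu$, reapplying $\omega^\nu$ to the resulting quasi-isogeny returns the inclusion $\Lambda_\nu\hookrightarrow\omega^\nu(\ul\CM)$ by Theorem~\ref{Thm_EmbeddingOfEtCrystintoGalMods}(a). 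The main technical ingredient is the translation between the local \'etale datum at $\nu$ and a genuine global modification of the $C$-motive; this is precisely what Proposition~\ref{Prop_Pullback} provides through the \fpqc-cover $\dot{C}_L \sqcup \Spec(\wh\CO_{C,\nu}\wh\otimes L)\to C_L$.
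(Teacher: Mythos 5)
Your proposal is correct and follows essentially the same route as the paper: applying $\omega^\nu$ for the forward direction, reconstructing the \'etale crystal from the lattice via the Galois-invariants recipe of Theorem~\ref{Thm_EmbeddingOfEtCrystintoGalMods}, and invoking Proposition~\ref{Prop_Pullback} to produce the global modification. You are somewhat more explicit than the paper in two places — spelling out why the resulting quasi-isogeny is an isomorphism at all places away from $\nu$ (hence above $\ul\nu$) because the \fpqc-modification is supported only at $\nu$, and in justifying via the uniqueness clauses that the two assignments are mutually inverse, whereas the paper compresses this into ``by construction''.
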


\begin{proof}
Note that $\nu$ does not lie in the characteristic places $\ul\nu$, and hence the associated crystals $\Gamma_\nu(\ul\CM')$ and $\Gamma_\nu(\ul\CM)$ at $\nu$ are \'etale.
We may view  $\omega^\nu(\ul\CM')$ as a $\Gamma_L$-stable sublattice of $\omega_Q^\nu(\ul\CM)$ contained in $\omega^\nu(\ul\CM)$ by applying $\omega^\nu(-)$ to a given quasi-isogeny $f:\ul\CM'\to\ul\CM$

$$
\omega^\nu(\ul\CM')\hookrightarrow \omega^\nu(\ul\CM)\subseteq \omega_{Q_\nu}^\nu(\ul\CM).
$$
Vice versa, consider the inclusion 

$$
\Lambda_\nu\otimes_{A_\nu} L^\sep\dbl z\dbr\subseteq\omega^\nu(\ul\CM)\otimes_{A_\nu}L^\sep\dbl z\dbr=\Gamma_\nu(\ul\CM)\otimes_{L\dbl z\dbr} L^\sep\dbl z\dbr.
$$
Therefore we get a quasi-isogeny $\hat{f}$ from $\hat{\ul M}':=(\Lambda_\nu\otimes_{A_\nu} L^\sep\dbl z\dbr)^{\Gamma_L}$ to $\hat{\ulM}:=(\Gamma_\nu(\ul\CM)\otimes_{L\dbl z\dbr} L^\sep\dbl z\dbr)^{\Gamma_L}$. Note that both $\hat{\ulM}$ and $\hat{\ulM}'$ are \'etale crystals. Namely, $\hat{\ul M}$ is \'etale by Theorem~\ref{Thm_EmbeddingOfEtCrystintoGalMods}, and $\hat{\ul M}'$ is \'etale, because of the following reason. Since $\hat{ M}'\hookrightarrow \hat{M}$, we observe that the former module is also a finite projective $A_{\nu,L}$-module, moreover the Frobenius map on $\hat{M}'=(\Lambda_\nu\otimes_{A_\nu} L^\sep\dbl z\dbr)^{\Gamma_L}$ is given by $\id\otimes \hat{\sigma}_\nu^\ast$, so it is an isomorphism. Hence, according to Proposition \ref{Prop_Pullback}, we may construct the pull back $\ul\CM':=\hat{f}^\ast \ul\CM$ of $\ul\CM$ along $\hat{f}$, which comes with a canonical quasi-isogeny $f:\ul\CM'\to\ul\CM$. By construction the above assignments are inverse to each other.

\end{proof}

\section{Quasi-isogeny classes and Honda-Tate theory}\label{Sect_Quasi-isogeny classes and Honda-Tate theory}

Recall that a Weil $p^n$-number is an algebraic number $\pi$ for which there exists an integer $m$ such that $\pi\ol\pi=p^n$ for all $Q[\pi] \to \BC$. Here $\ol \pi$ denotes the complex conjugate of $\pi$. The Honda-Tate theory, \cite{Honda} and \cite{Tate66}, states that sending an abelian variety $\CA$ over a finite field with $q$-elements to the eigenvalue of Frobenius endomorphism $\pi_\CA$ on the first \'etale cohomolgy group, gives a bijection between isogeny classes of simple abelian varieties over $\BF_q$ and the set of Weil $p^n$-numbers $W(p^n)$ (up to conjugation). 

\bigskip
In this section we discuss the analogous picture for $\mot(\ol\BF_q)$. Note that, unlike the above case of abelian varieties, as it is mentioned earlier, $C$-motives are not pure. This means that the eigenvalues of Frobenius endomorphism may have different valuations. So in particular one must modify the group of Weil (q-)numbers.  \\

\begin{proposition}\label{PropQIsoClassesI}
Let $L/\BF_q$ be a finite field. For $\ul\CM$ and $\ul\CM'$ in $\mot(L)$, we let $\pi:=\pi_\ul\CM$ and $\pi':=\pi_{\ul\CM'}$ denote the corresponding Frobenius endomorphism with minimal polynomials $\mu:=\mu_{\pi_\ul\CM}$ and $\mu':=\mu_{\pi_\ul\CM}$. Let $\chi_\nu$ and $\chi_\nu'$ denote the characteristic polynomials of $\pi_\nu$ and $\pi_\nu'$. Consider the following statements
\begin{enumerate}
\item
$\ul\CM'$ is quasi-isogenous to a quotient of $\ul\CM$. 
\item
$\omega_{Q_\nu}^\nu(\ul\CM')$ is $\Gamma_L$-isomorphic to a $\Gamma_L$-quotient space of $\omega_{Q_\nu}^\nu(\ul\CM)$. 
\item
$\chi_\nu'$ divides $\chi_\nu$ in $Q_\nu[x]$. 
\item
$\mu'$ divides $\mu$ in $Q[x]$ and $\rk \ul\CM \leq \rk \ul\CM'$.
\end{enumerate}
then (a) and (b) are always equivalent and imply (c) and (d). Furthermore we have the following statements
\begin{enumerate}

\item[i)]
If $\pi_\nu$ and $\pi_\nu'$ are semi-simple then (c) also implies (b).
\item[ii)]
If $\mu$ is irreducible, then all the above statements are equivalent.

\end{enumerate}
\end{proposition}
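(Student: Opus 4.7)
The plan is to translate each statement into a claim about $Q_\nu[\Gamma_L]$-modules via the étale realization and then invoke the Tate conjecture (Theorem \ref{Thm_Tate_Conj}) together with exactness of $\omega_{Q_\nu}^\nu$ (Proposition \ref{Prop_RationalTateisexactII}). The direction $(a)\Rightarrow(b)$ is immediate: if $\ul\CM\twoheadrightarrow\ul\CN$ is a quotient and $\phi:\ul\CM'\to\ul\CN$ is a quasi-isogeny, applying $\omega_{Q_\nu}^\nu$ yields a $\Gamma_L$-equivariant surjection and isomorphism whose composition realizes $\omega_{Q_\nu}^\nu(\ul\CM')$ as a $\Gamma_L$-quotient of $\omega_{Q_\nu}^\nu(\ul\CM)$. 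Conversely, a $\Gamma_L$-equivariant surjection $g:\omega_{Q_\nu}^\nu(\ul\CM)\twoheadrightarrow\omega_{Q_\nu}^\nu(\ul\CM')$ comes from an element of $\QHom_L(\ul\CM,\ul\CM')\otimes_Q Q_\nu$ via Tate's theorem. Since surjectivity is an open condition on the finite-dimensional $Q_\nu$-vector space $\Hom_{Q_\nu[\Gamma_L]}(\omega_{Q_\nu}^\nu(\ul\CM),\omega_{Q_\nu}^\nu(\ul\CM'))$ and the image of $\QHom_L$ is $Q$-dense inside it, one can choose $f\in\QHom_L(\ul\CM,\ul\CM')$ with $\omega_{Q_\nu}^\nu(f)$ still surjective; exactness of $\omega_{Q_\nu}^\nu$ combined with $\dim_{Q_\nu}\omega_{Q_\nu}^\nu(\ul\CN)=\rk\ul\CN$ then forces $\coker(f)=0$ in $\mot(L)$, so $\ul\CM'$ is itself a quotient of $\ul\CM$.

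The implications $(a,b)\Rightarrow(c)$ and $(a,b)\Rightarrow(d)$ are formal: since $\Frob_L$ acts as $\pi_\nu^{-1}$ on $\omega_{Q_\nu}^\nu(\ul\CM)$ and as $(\pi_\nu')^{-1}$ on $\omega_{Q_\nu}^\nu(\ul\CM')$, any $\Gamma_L$-equivariant surjection intertwines $\pi_\nu$ with $\pi_\nu'$. Hence the minimal polynomial of $\pi$ annihilates $\pi'$, giving $\mu'\mid\mu$, and the characteristic polynomial of Frobenius on a quotient divides that on the whole, giving $\chi_\nu'\mid\chi_\nu$. The accompanying rank comparison follows from surjectivity together with the identity $\dim_{Q_\nu}\omega_{Q_\nu}^\nu(-)=\rk(-)$.

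For part (i), assuming $\pi_\nu$ and $\pi_\nu'$ are semisimple we use Remark \ref{Rem_HomDec} to decompose $\omega_{Q_\nu}^\nu(\ul\CM)\cong\bigoplus_\mu K_\mu^{m_\mu}$ and $\omega_{Q_\nu}^\nu(\ul\CM')\cong\bigoplus_\mu K_\mu^{m_\mu'}$, where on each simple factor $K_\mu=Q_\nu[x]/(\mu)$ both $\pi_\nu$ and $\pi_\nu'$ act by multiplication by $x$. The divisibility $\chi_\nu'\mid\chi_\nu$ then amounts to $m_\mu'\leq m_\mu$ for each irreducible $\mu$, and the obvious projection onto the first $m_\mu'$ copies in each isotypic component supplies a $\Gamma_L$-equivariant surjection, establishing $(b)$.

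For part (ii), if $\mu$ is irreducible then $F=Q[x]/(\mu)$ is a field, so Theorem \ref{Thm_SemisimplicityQEnd} shows $\pi_\nu$ is semisimple and Proposition \ref{Proph|r} gives $\chi_\nu=\mu^{r/h}$ with $r=\rk\ul\CM$ and $h=\deg\mu$. Under $(d)$, divisibility of $\mu'$ by the irreducible $\mu$ forces $\mu'=\mu$ (the trivial case being excluded), so $F'=Q[\pi']$ is also a field, $\pi_\nu'$ is semisimple and $\chi_\nu'=\mu^{r'/h}$ with $r'=\rk\ul\CM'$; the rank condition then yields $\chi_\nu'\mid\chi_\nu$, reducing to part (i). The main technical obstacle in the whole argument is the passage from $(b)$ back to $(a)$, which depends delicately on the Tate conjecture together with an openness/approximation argument needed to promote a $Q_\nu$-linear combination of quasi-morphisms into a single quasi-morphism whose realization remains surjective.
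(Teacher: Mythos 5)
Your proof is correct and, apart from one step, follows essentially the same path as the paper. The place where you argue differently is the implication $(b)\Rightarrow(a)$. The paper rescales the given $Q_\nu$-linear surjection to integral coefficients, uses the integral Tate isomorphism $\Hom_L(\ul\CM,\ul\CM')\otimes_A A_\nu\cong\Hom_{A_\nu[\Gamma_L]}(\omega^\nu(\ul\CM),\omega^\nu(\ul\CM'))$ to approximate it mod $z_\nu^{N+1}$ by a morphism $f\in\Hom_L(\ul\CM,\ul\CM')$, and then carries out an explicit lattice computation verifying that $\omega_{Q_\nu}^\nu(f)$ has image of full rank $\rk\ul\CM'$, so that $\ul\im f\to\ul\CM'$ is a quasi-isogeny. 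You instead argue by density and openness: by the Tate theorem $\QHom_L(\ul\CM,\ul\CM')$ is a $Q$-form of the finite-dimensional $Q_\nu$-space $\Hom_{Q_\nu[\Gamma_L]}(\omega_{Q_\nu}^\nu(\ul\CM),\omega_{Q_\nu}^\nu(\ul\CM'))$, hence $\nu$-adically dense in it, while surjectivity is a $\nu$-adically open condition, so some $f\in\QHom_L(\ul\CM,\ul\CM')$ has $\omega_{Q_\nu}^\nu(f)$ still surjective; exactness of the realization then forces $\coker f=0$ in $\mot(L)$. The two arguments encode the same approximation idea; yours sidesteps the lattice bookkeeping and yields the slightly stronger conclusion that $\ul\CM'$ is a genuine quotient of $\ul\CM$ rather than merely quasi-isogenous to one. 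One thing worth flagging explicitly: the rank inequality printed in item (d) of the proposition should read $\rk\ul\CM'\le\rk\ul\CM$ (a quotient cannot have larger rank), and your treatment of part (ii), like the paper's, silently uses this corrected direction when concluding $\chi_\nu'\mid\chi_\nu$ from $\chi_\nu'=\mu^{r'/h}$ and $\chi_\nu=\mu^{r/h}$.
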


\begin{proof}
$(a) \Rightarrow (b)$ is obvious. So let us first show that $(b)\Rightarrow (a)$. The main ingredient to prove this is the analog of Tate conjecture; See Theorem \ref{Thm_Tate_Conj}. Consider the quotient morphism $f_\nu:\omega_{Q_\nu}^\nu(\ul\CM)\to\omega_{Q_\nu}^\nu(\ul\CM')$. Multiplying with a suitable power of the uniformizer $z_\nu\in A_\nu$, we may assume that it is defined with integral coefficients $f_\nu: \omega^\nu(\ul\CM)\to\omega^\nu(\ul\CM')$ with $$z_\nu^N\omega^\nu(\ul\CM')\subseteq f_\nu(\omega^\nu(\ul\CM)),$$ for some integer $N\gg 0$. By Theorem \ref{Thm_Tate_Conj} $f_\nu$ can be viewed as an element of $$\Hom_L(\ul\CM,\ul\CM')\otimes_A A_\nu$$ and thus induces a morphism $f:\ul\CM\to\ul\CM'$ such that $\omega^\nu(f)=f_\nu~(\mod~z_\nu^{N+1})$. We claim that $\dim_{Q_\nu}\omega_{Q_\nu}^\nu(f)(\omega^\nu (\ul\CM))=r':=\rk\ul\CM'$. To see this first notice that from the above explanation we have
\[
\xymatrix {
\BF_\nu^{r'}\cong\omega^\nu(\ul\CM')/z_\nu\cdot \omega^\nu(\ul\CM') \cong  z_\nu^N \omega^\nu(\ul\CM')/z_\nu^{N+1}\cdot \omega^\nu(\ul\CM')\subseteq \omega^\nu(f)(\omega^\nu(\ul\CM))/z_\nu^{N+1}\cdot\omega^\nu(\ul\CM'),\\
}
\]
and thus one may take $x_1,\dots ,x_{r'} \in \omega^\nu (f)(\omega^\nu (\ul\CM))$ such that they generate the $\BF_\nu$-vector space 
$$
\omega^\nu (\ul\CM')\slash z_\nu \cdot \omega^\nu(\ul\CM').
$$

Let $H:=\sum_{i=1}^{r'}A_\nu\cdot x_i\subseteq\omega^\nu (f) (\omega^\nu (\ul\CM))\subseteq \omega^\nu(\ul\CM')$. As $\omega^\nu(\ul\CM')$ is a free module of rank $r'$, computing the image of $H$ in $\omega^\nu (f) (\omega^\nu (\ul\CM))/z_\nu^{N+1}\omega^\nu(\ul\CM')$, we see that the rank of the free module $H$ also equals $r'$. Therefore from
$$
Q_\nu^{r'}\cong H\otimes_{A_\nu}Q_\nu=\sum_{i=1}^{r'}Q_\nu\cdot x_i\subseteq \omega_{Q_\nu}^\nu (f)(\omega_{Q_\nu}^\nu(\ul\CM))\subseteq\omega_{Q_\nu}^\nu(\ul\CM'),
$$ 
we see that $\dim_{Q_\nu}\omega_{Q_\nu}^\nu (f) (\omega^\nu(\ul\CM))=r'$. 
Now observe that $\rk (\ul\im f)=r'$. To see this, apply $\omega^\nu(-)$ to the morphism $\ul\CM\twoheadrightarrow \ul\im f\subseteq \ul\CM'$, to get a surjection $\omega^\nu(f): \omega^\nu(\ul\CM)\to \omega^\nu(\ul\im f) \subseteq \omega^\nu(\ul\CM')$. Consequently we have 
$$
r'=\dim_{Q_\nu}\omega_{Q_\nu}^\nu (\ul\im f)= \rk(\ul\im f),
$$
\noindent
and therefore $\ul\im f \to \ul\CM'$ is a quasi-isogeny. 

\bigskip
\noindent
$(b) \Rightarrow (c)$ and $(d)$, precisely because of the following commutative diagram

$$
\CD
\omega_Q^\nu(\ul\CM)@>f_\nu>>\omega_Q^\nu(\ul\CM')@>>>0\\
@V{\pi_\nu}VV @VV{\pi_\nu'}V @.\\
\omega_Q^\nu(\ul\CM)@>{f_\nu}>>\omega_Q^\nu(\ul\CM')@>>>0.
\endCD
$$  

\noindent
Now assume that $\pi_\nu$ and $\pi_\nu'$ are semi-simple with characteristic polynomials $\chi_\nu$ and $\chi_\nu'$. Write $\chi_\nu'=\prod_{i=1}^{n'} P_i'$ for irreducible polynomials $P_i'\in Q_\nu[x]$. By semi-simplicity we may write $\omega_{Q_\nu}^\nu(\ul\CM')= \oplus_{i=1}^{n'} V_i'$ as $Q_\nu[\pi_\nu]$-module, where $V_i'\cong Q_\nu[x]/P_i'$, see Remark \ref{Rem_HomDec}. Thus $\chi_\nu=\chi_\nu' \cdot u(x) $ for some $u(x)\in Q_\nu[x]$, and hence $\oplus_{i=1}^{n'} V_i'$ appears as a summand of $\omega_{Q_\nu}^\nu(\ul\CM)$. \\

\noindent
Assume that d) holds and suppose further that $\mu_\pi$ is irreducible, then we see that $\mu_\pi=\mu_\pi'$ and $F=Q[x]/\mu_\pi$ is a field. Therefore by Proposition~\ref{Proph|r} we have $\chi _\nu'=\left(\mu_\pi'\right)^{\rk \ul\CM'/[F:Q]}| \left(\mu_\pi\right)^{\rk \ul\CM/[F:Q]}=\chi_\nu$. Furthermore  $\pi_\nu$ and $\pi_\nu'$ are semi-simple and (b) follows from (c) as in i).\\

\end{proof}

\noindent
This proposition has the following consequence.

\begin{proposition}\label{PropQIsoClassesII}
Keep the notation from the above proposition. Consider the following statements

\begin{enumerate}
\item
$\ul\CM$ is quasi-isogenus to $\ul\CM'$.
\item
There exists an isomorphism $\omega_{Q_\nu}^\nu(\ul\CM)\tilde{\to}\omega_{Q_\nu}^\nu(\ul\CM')$ in $\Hom_{Q_\nu[\Gamma_L]}(\omega_{Q_\nu}^\nu(\ul\CM),\omega_{Q_\nu}^\nu(\ul\CM'))$.
\item
$\chi_\nu=\chi_\nu'$.
\item
$\mu_\pi=\mu_{\pi'}$ and $\rk\ul\CM=\rk\ul\CM'$.
\item
There exist an isomorphism of $Q$-algebras
$$
\alpha:\QEnd_L(\ul\CM)\tilde{\to}\QEnd_L(\ul\CM'),
$$
with $\alpha(\pi)=\pi'$.
\item
There exist an isomorphism of $Q_\nu$-algebras 
$$
\alpha_\nu:\QEnd_L(\omega^\nu(\ul\CM))\tilde{\to}\QEnd_L(\omega^\nu(\ul\CM')),
$$
with $\alpha(\pi_\nu)=\pi_\nu'$.\\

\noindent
Then we have the following statements
\item[i)] $(a)$ and $(b)$ are equivalent and imply $(c)$, $(d)$ and $(e)$. Also $(e)$ implies $(f)$.

\item[ii)] if $\pi_\nu$ and $\pi_\nu'$ are semisimple then we have
$$
(a) \Leftrightarrow (b) \Leftrightarrow (c)\Leftrightarrow(f)\Leftrightarrow(e) 
$$
and $(c) \Rightarrow (d)$.
\item[iii)] if $\mu_\pi$ and $\mu_{\pi'}$ are irreducible in $Q[x]$, then all the above statements are equivalent.

\end{enumerate}
\end{proposition}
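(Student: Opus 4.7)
The plan is to bootstrap from Proposition~\ref{PropQIsoClassesI} (which handles the one-sided ``quasi-isogenous to a quotient'' picture) to the symmetric statement, while extracting the algebraic invariants (c)--(f) from the fact that a quasi-isogeny conjugates Frobenius. For (a)$\Leftrightarrow$(b): the implication $\Rightarrow$ is immediate, since $\omega_{Q_\nu}^\nu$ is a functor and a quasi-isogeny is by definition an isomorphism in $\mot(L)$. For $\Leftarrow$, I would feed the isomorphism in (b), viewed merely as a surjection, into (b)$\Rightarrow$(a) of Proposition~\ref{PropQIsoClassesI} to obtain $f\colon\ul\CM\to\ul\CM'$ together with a quasi-isogeny $\ul\im f\to\ul\CM'$. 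Since the realizations have the same dimension, $\rk\ul\CM=\rk\ul\CM'=\rk(\ul\im f)$, so the surjection $\ul\CM\twoheadrightarrow\ul\im f$ in the abelian category $\mot(L)$ has kernel of generic rank zero, hence vanishing kernel, and is therefore itself an isomorphism. Thus $f$ is a quasi-isogeny.

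Next, the implications (a)$\Rightarrow$(c),(d),(e),(f) all hinge on the identity $f\circ\pi=\pi'\circ f$, obtained by iterating the defining relation $f\circ\tau_\CM=\tau_{\CM'}\circ\sigma^*f$. Conjugation by $f$ gives the $Q$-algebra isomorphism $\alpha\colon\QEnd_L(\ul\CM)\isoto\QEnd_L(\ul\CM')$ with $\alpha(\pi)=\pi'$, yielding (e); tensoring with $Q_\nu$ and applying Theorem~\ref{Thm_Tate_Conj} yields (f). From (b), the isomorphism of $\Gamma_L$-modules conjugates $\pi_\nu$ onto $\pi_\nu'$, so $\chi_\nu=\chi_\nu'$, which is (c), and then $\rk\ul\CM=\deg\chi_\nu=\rk\ul\CM'$. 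For the minimal-polynomial half of (d) I would identify $\mu_\pi\in Q[x]$ with the minimal polynomial of $\pi_\nu$ over $Q_\nu$, using flatness of $Q\to Q_\nu$ together with the injection $F\otimes_Q Q_\nu\hookrightarrow E\otimes_Q Q_\nu$ provided by Theorem~\ref{Thm_Tate_Conj}; the same identification for $\ul\CM'$ and conjugacy of $\pi_\nu,\pi_\nu'$ then force $\mu_\pi=\mu_{\pi'}$ in $Q[x]$.

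For part (ii) I need the additional implications (c)$\Rightarrow$(b) and (f)$\Rightarrow$(c) under semisimplicity. For the first, Remark~\ref{Rem_HomDec} decomposes $\omega_{Q_\nu}^\nu(\ul\CM)\cong\bigoplus_\mu K_\mu^{m_\mu}$ as $Q_\nu[\pi_\nu]$-module with multiplicities read off from $\chi_\nu=\prod_\mu\mu^{m_\mu}$, and similarly for $\ul\CM'$; equality of characteristic polynomials yields an isomorphism of $Q_\nu[\pi_\nu]$-modules, and since $\Gamma_L$ acts topologically through $\pi_\nu^{-1}$ this is automatically a $\Gamma_L$-isomorphism. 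For (f)$\Rightarrow$(c), Proposition~\ref{Prop_EndomorphismRingI} identifies $F_\nu$ with the centre of $E_\nu$; restricting $\alpha_\nu$ to centres gives $F_\nu\isoto F_\nu'$ sending $\pi_\nu\mapsto\pi_\nu'$, which forces the minimal polynomial of $\pi_\nu$ over $Q_\nu$ to coincide with that of $\pi_\nu'$ (hence the same irreducible factors $\mu$), and matching of the multiplicities $m_\mu$ follows because the Artin--Wedderburn decomposition $E_\nu=\bigoplus_\mu\mathrm{Mat}_{m_\mu}(K_\mu)$ is intrinsic. Part (iii) is then a quick corollary: irreducibility of $\mu_\pi$ and $\mu_{\pi'}$ makes $F$ and $F'$ fields, so $\pi_\nu,\pi_\nu'$ are semisimple by Theorem~\ref{Thm_SemisimplicityQEnd} and we are in the setting of (ii); the only new implication is (d)$\Rightarrow$(c), which follows at once from Proposition~\ref{Proph|r} since $\chi_\nu=\mu_\pi^{\rk\ul\CM/[F:Q]}$ and analogously for $\ul\CM'$.

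The main obstacle I anticipate is the clean identification of the global minimal polynomial $\mu_\pi\in Q[x]$ with the local one of $\pi_\nu$ in $Q_\nu[x]$: this is the unspoken hinge between (c), (d), and (f), and must be established rigorously via faithful flatness of $Q_\nu/Q$ combined with the injectivity coming from Theorem~\ref{Thm_Tate_Conj}. A related subtlety in (f)$\Rightarrow$(c) is that the condition $\alpha_\nu(\pi_\nu)=\pi_\nu'$ must be used to pin down not merely the multiplicities but also the specific irreducible factors of $\chi_\nu$, which is precisely why restricting to centres (where the element $\pi_\nu$ actually sits, under semisimplicity) is unavoidable.
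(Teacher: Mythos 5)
Your argument is correct and follows essentially the same route as the paper's own proof: reduce (a), (b), (c), (d) to Proposition~\ref{PropQIsoClassesI}, get (a)$\Rightarrow$(e) by conjugating endomorphisms through the quasi-isogeny, obtain (f)$\Rightarrow$(c) under semisimplicity from the Artin--Wedderburn decomposition of Remark~\ref{Rem_HomDec}, and handle the irreducible case via Proposition~\ref{Proph|r}. The extra care you take with the symmetrization step for (b)$\Rightarrow$(a) and the identification of $\mu_\pi$ with the minimal polynomial of $\pi_\nu$ over $Q_\nu$ is sound and merely fills in details the paper leaves implicit.
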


\begin{proof}
The statements about $(a)$, $(b)$, $(c)$ and $(d)$ follow from the above Proposition~\ref{PropQIsoClassesI}.\\
\noindent
Precisely ($a$) implies $(e)$. Namely, a quasi-isogeny $f:\ul\CM\to\ul\CM'$  gives the isomorphism  $$\QEnd_k(\ul\CM)\tilde{\to}\QEnd_k(\ul\CM')$$ by sending $g\mapsto f\circ g\circ f^{-1}$. Furthermore we have $\alpha(\pi)=f \circ \pi \circ f^{-1}=f \circ \tau_\CM \circ (\sigma^\ast)\tau_\CM \cdots (\sigma^\ast)^{e-1}\tau \circ f^{-1}$, where using $f\circ\tau_\CM=\sigma^\ast f \circ \tau_{\CM'}$, the later equals $\pi_\CM'$.\\
\noindent 
Suppose $\pi_\nu$ and $\pi_\nu'$ are semisimple. Then the assertion ($f$) implies $(c)$ follows from decomposition $\QEnd_k(\ul\CM)=\oplus_{i=1}^n Mat_{m_\mu\times m_\mu}(K_\mu)$ with $\chi_\nu=\prod_\mu\mu^{m_\mu}$ and $K_\mu=Q_\nu[x]/\mu$; see remark \ref{Rem_HomDec}. \\
\noindent
Suppose $\mu_\pi$ and $\mu_{\pi'}$ are irreducible, then $F=Q[x]/\mu_\pi$ and
$F'=Q[x]/\mu_{\pi'}$ are fields and therefore  $\pi_\nu$ and $\pi_{\nu'}$ are semi-simple. As we have seen above, this implies that $\chi_\nu=\chi_\nu'$. We conclude that $\mu_{\pi}=\mu_{\pi'}$ by Proposition~\ref{Proph|r}(b).

\end{proof}

\noindent
\textbf{-The Grothendieck Ring $K_0(\CM ot_C^{\nu}(\BF) )$}\\

Recall that the category $Cor_\sim(k,\BQ)$ is the category whose objects are smooth projective schemes over $k$ and whose morphisms are given by

$$
\Hom(X,Y)=\oplus_{X_i}Ch_{\sim , \dim X_i}(X_i\times_k Y,\BQ).
$$  
\noindent
Here $X_i$ denote the connected components of $X$ and $Ch_{\sim, d}(-,\BQ)$ denotes the group of cycles of dimension $d$ modulo the equivalence relation $\sim$, with coefficients in $\BQ$. The category $Ch_\sim^{eff}(k)$ is the pseudo-abelian envelop of $Cor_\sim(k,\BQ)$  and the category $Ch_\sim(k,\BQ)$ is obtained by inverting \emph{Lefschetz
motive} $\BL$. The Lefschetz
motive $\BL$ comes from canonical decomposition $[\BP_k^1] = [\Spec k] \oplus \BL$ in $Ch_\sim^{eff}(k)$. When $\sim$ is the numerical equivalence ($\sim=num$) the category $Ch_\sim(k,\BQ)$ is semi-simple abelian category by \cite{Jannsen}. The \emph{Honda-Tate theory} establishes a bijection between the set $\Sigma Ch_{num}(k,\BQ)$ of simple objects of $Ch_{num}(k,\BQ)$ and the conjugacy classes in $\Gal(\BQ^\alg/\BQ) \backslash W(q)$, where $W(q)$ is the subgroup of Weil q-numbers in $(\BQ^\alg)^\times$. Here $q:=\# k$. \\

We now discuss the analogous picture over function fields. Set $W_\ul\nu=\{\alpha\in Q^\alg ; \nu(\alpha)= 0 ~\forall~ \nu\notin \ul\nu  \}$. Consider the free $\BZ$-module $\BZ[\Gamma_Q\backslash W_\ul\nu\times \BN_{\geqslant 1}\slash\sim]$ generated by the equivalence classes in $\Gamma_Q\backslash W_\ul\nu\times \BN_{\geqslant 1}\slash\sim$. Here $(\alpha,n)$ and $(\beta,m)$ are equivalent if $\alpha^{m.l}=\beta^{n.l}$  for some integer $l\in \BN_{\geqslant 1}$. The operation $(\alpha,1)\cdot (\beta,1) = (\alpha\beta, 1)$ induces a ring structure on $\BZ[\Gamma_Q\backslash W\times \BN_{\geqslant 1}\slash\sim]$. Note that to compute $(\alpha,n)\cdot (\beta,m)$, using the equivalence relation, one can adjust both factors to get a product of the form $(\ast,1)\cdot (\ast,1)$. Namely, write $(\alpha,n)= (\alpha',1)$ with $\alpha'^n=\alpha$ (resp. $(\beta,m)= (\beta',1)$ with $\beta'^m=\beta$) and thus we have $(\alpha,n)\cdot(\beta,m)=(\alpha'\beta',1)$. One can easily see that this is independent of the choice of $\alpha'$ and $\beta'$.\\

\begin{theorem}\label{Thm_HT_C-Mot}
There is a bijection 

$$
\text{set $\Sigma$ of simple objects in $\mot(\ol{\BF}_q)$} \leftrightarrow \text{elements of $\Gamma_Q\backslash W_\ul\nu\times \BN_{\geqslant 1}\slash\sim$}.
$$

\end{theorem}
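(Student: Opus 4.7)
The plan is to construct an explicit map $\Phi\colon\Sigma\to\Gamma_Q\backslash W_\ul\nu\times\BN_{\geqslant1}/\!\sim$ and argue that it is well-defined and injective, and then cite \cite{FelixThesis} for surjectivity. Starting from a simple $\ul\CM\in\mot(\ol\BF_q)$, Proposition~\ref{Prop_GeomMixedareMixedOverAFiniteExtensionII} provides a finite extension $L=\BF_{q^n}$ and a descent $\ul\CM_0\in\mot(L)$ whose base change is $\ul\CM$. Any non-trivial sub- or quotient of $\ul\CM_0$ would base change to a non-trivial one of $\ul\CM$, so $\ul\CM_0$ is also simple; in particular $E=\QEnd_L(\ul\CM_0)$ is a division algebra, and the subalgebra $F=Q[\pi_{\ul\CM_0}]\subseteq E$ is a field, so the minimal polynomial $\mu_{\pi_{\ul\CM_0}}$ is irreducible in $Q[x]$. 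I then choose any root $\alpha$ of $\mu_{\pi_{\ul\CM_0}}$ in $Q^\alg$ and set $\Phi([\ul\CM]):=[(\alpha,n)]$.

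For well-definedness I must verify three things. First, different descents $\ul\CM_0,\ul\CM_0'$ over the same $L$ share the same Frobenius minimal polynomial, because $\pi_{\ul\CM_0}\otimes_L\ol\BF_q$ agrees with the canonical $\#L$-Frobenius endomorphism of $\ul\CM$. Second, replacing $L$ by $\BF_{q^{mn}}$ replaces $\pi_{\ul\CM_0}$ by $\pi_{\ul\CM_0}^m$ and hence $\alpha$ by $\alpha^m$, which is precisely the equivalence $(\alpha^m,mn)\sim(\alpha,n)$. Third, a different choice of root is a $\Gamma_Q$-translate of $\alpha$. It remains to show $\alpha\in W_\ul\nu$: for $\nu\notin\ul\nu$ the map $\tau_{\CM_0}$ is an isomorphism on the stalk at $\nu$, so $\pi_{\ul\CM_0}$ acts invertibly on the integral \'etale realization $\omega^\nu(\ul\CM_0)$; by Proposition~\ref{Proph|r}(b) the characteristic polynomial of $\pi_\nu$ on $\omega^\nu(\ul\CM_0)$ is a power of $\mu_{\pi_{\ul\CM_0}}$, so both $\pi$ and $\pi^{-1}$ are integral at $\nu$ and thus $\nu(\alpha)=0$.

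For injectivity, suppose $\ul\CM,\ul\CM'\in\Sigma$ give the same class $[(\alpha,n)]=[(\alpha',n')]$. After passing to a common descent field $\BF_{q^N}$ (so that the Frobenii get replaced by suitable powers), and after replacing $\alpha'$ by a $\Gamma_Q$-conjugate, I may assume $\pi_{\ul\CM_0}^{N/n}$ and $\pi_{\ul\CM_0'}^{N/n'}$ have the same irreducible minimal polynomial over $Q$. Since both descended motives are still simple, Proposition~\ref{PropQIsoClassesII}(iii) reduces the task to checking that $\rk\ul\CM=\rk\ul\CM'$. This rank equality is the delicate point and will be the main obstacle of this half of the proof: in the division algebra $E=\QEnd_{\BF_{q^N}}(-)$, by Proposition~\ref{Proph|r}(a) we have $r=h\sqrt{\dim_FE}$ with $h=[F:Q]$, so I must argue that the invariants of $E$ as a central division algebra over $F$ are determined by $\alpha$ alone. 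This will use the Tate-conjecture Theorem~\ref{Thm_Tate_Conj} together with the observation that at each place $\nu\notin\ul\nu$ the local algebra $E\otimes_QQ_\nu$ equals the centralizer of $\pi_\nu$ in $\End_{Q_\nu}(\omega_{Q_\nu}^\nu(\ul\CM))$, while at the characteristic places the invariants are read off from the Newton polygon of $\pi$ via the crystalline realization $\Gamma_\nu$ of Section~\ref{SubSectRealizationFunctors}; both are functions of $\alpha$ only. Granted this, Proposition~\ref{PropQIsoClassesII}(iii) yields a quasi-isogeny $\ul\CM_0\otimes\BF_{q^N}\sim\ul\CM_0'\otimes\BF_{q^N}$, which after base change to $\ol\BF_q$ produces the desired isomorphism in $\mot(\ol\BF_q)$.

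Surjectivity is the deepest point: given $(\alpha,n)\in W_\ul\nu\times\BN_{\geqslant1}$ one must construct a simple $C$-motive over $\ol\BF_q$ realizing it. As indicated in the excerpt, this is the main theorem of R\"oting's thesis \cite{FelixThesis} and I will only recall its statement here rather than reproduce the construction, which proceeds by building an appropriate $\FG$-shtuka with prescribed local data and passing through the functor~\eqref{Eq_FunctShtToGMot}. Combining the injectivity above with R\"oting's existence result yields the bijection.
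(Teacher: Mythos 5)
Your overall route is the same as the paper's: define the map $\ul\CM\mapsto[(\alpha,n)]$ by choosing a descent of $\ul\CM$ to a finite field $L=\BF_{q^n}$, taking $\alpha$ a root of the minimal polynomial of the Frobenius quasi-isogeny; check well-definedness and membership $\alpha\in W_\ul\nu$; prove injectivity via the quasi-isogeny criterion; and cite \cite{FelixThesis} for surjectivity. The well-definedness discussion (dependence on the descent field, the equivalence relation, and the valuation argument via integrality of $\pi_\nu^{\pm1}$ on $\omega^\nu(\ul\CM_0)$ for $\nu\notin\ul\nu$) is sound and more detailed than the paper gives.

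The genuine gap is in injectivity. You reduce via Proposition~\ref{PropQIsoClassesII}(iii) to the equality $\rk\ul\CM=\rk\ul\CM'$, call this ``the delicate point,'' and propose to derive it by computing the Brauer invariants of the division algebra $E$ from $\alpha$: at $\nu\notin\ul\nu$ using Theorem~\ref{Thm_Tate_Conj}, and at $\nu\in\ul\nu$ by reading invariants off the Newton polygon of $\pi$ via $\Gamma_\nu$. The second half of that claim is not proved and is not elementary: the paper has no Tate-type theorem for the crystalline realization at $\nu\in\ul\nu$, and what you are sketching is essentially a Honda--Tate local-invariant formula for $C$-motives — a nontrivial result that is not in this paper (it belongs with the surjectivity work of \cite{FelixThesis}, which you are already citing). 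As written, this is a missing step.

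The paper cites Proposition~\ref{PropQIsoClassesI} instead, and the reason this is better is that its statement is asymmetric: item~(a) there reads ``$\ul\CM'$ is quasi-isogenous to a quotient of $\ul\CM$,'' and item~(ii) says that once the minimal polynomial $\mu$ is irreducible, (d) (a divisibility of minimal polynomials plus a one-sided rank inequality) is equivalent to (a). So after the same normalizations you already perform (common descent field, conjugating $\alpha'$, passing to powers of Frobenius so that $\mu_\pi=\mu_{\pi'}$ irreducible), one simply orders the two motives so that the one-sided rank inequality holds, deduces (a), and then uses simplicity of the larger motive to conclude that the quotient must be the whole thing. This yields the quasi-isogeny outright, and the rank equality comes out as a consequence rather than a hypothesis. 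Replacing your appeal to Proposition~\ref{PropQIsoClassesII}(iii) by this appeal to Proposition~\ref{PropQIsoClassesI}(ii) (together with simplicity) closes the gap with no new input.

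One small additional point worth recording: when you pass from $\BF_{q^n}$ to $\BF_{q^N}$ you should note explicitly that the base change $\ul\CM_0\otimes_L\BF_{q^N}$ remains simple — this follows because a nontrivial subobject over $\BF_{q^N}$ would base change to a nontrivial subobject of $\ul\CM$ over $\ol\BF_q$, contradicting simplicity of $\ul\CM$. You state the analogous fact for the initial descent but not for the common overfield; both are needed before one can invoke either proposition.
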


\begin{proof}
Let $\ul\CM:=(\CM,\tau_\CM)$ be a simple object in $\mot(\ol\BF_q)$. Suppose that it comes by base change from a $C$-motive in $\mot(L)$ for a finite extension $L/\BF_q$ of degree $n$, see Proposition \ref{Prop_GeomMixedareMixedOverAFiniteExtensionII}. Let $\pi:=\pi_\ul\CM$ denote the corresponding Frobenius isogeny and let $\mu_\pi$ denote the corresponding minimal polynomial. Let $\alpha_\pi$ be a zero of the minimal polynomial $\mu_\pi$. Then sending $\ul\CM$ to the pair $(\alpha,n)$ gives an assignment $\Sigma\to \Gamma_Q\backslash W_\ul\nu\times \BN_{\geqslant 1}\slash\sim$. This is one to one by Proposition \ref{PropQIsoClassesI}. The fact that it is onto was proved in \cite[Theorem 3.12]{FelixThesis}.

\end{proof}

\begin{remark}
Note that a sketchy proof of the one-to-one part is also given in \cite[Corollary~4.2]{FelixThesis}.

\end{remark}

\begin{corollary}
There is a morphism
$$
K_0(\mot(\ol\BF_q))\to \BZ[\Gamma_Q\backslash W_\ul\nu\times \BN_{\geqslant 1}\slash\sim]
$$
of rings.
\end{corollary}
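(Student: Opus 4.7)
The plan is to decompose the map into its additive and multiplicative aspects. Additively, semi-simplicity of $\mot(\ol\BF_q)$ (Theorem \ref{ThmPoincare-Weil}) together with the fact that the $\QHom$-spaces are finite dimensional over $Q$ (Proposition \ref{QHomhasfiniterank}) implies that $K_0(\mot(\ol\BF_q))$ is the free $\BZ$-module on the set $\Sigma$ of isomorphism classes of simple objects. Theorem \ref{Thm_HT_C-Mot} identifies $\Sigma$ with $\Gamma_Q\backslash W_\ul\nu\times\BN_{\geqslant 1}/\!\!\sim$, yielding an isomorphism of abelian groups
\[
K_0(\mot(\ol\BF_q))\;\isoto\;\BZ[\Gamma_Q\backslash W_\ul\nu\times\BN_{\geqslant 1}/\!\!\sim].
\]
It only remains to check that the multiplication on the left, induced by $\otimes$, is carried to the ring structure defined on the right.

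For this, I would take two simple objects $\ul\CM,\ul\CN\in\mot(\ol\BF_q)$ descending (by Proposition \ref{Prop_GeomMixedareMixedOverAFiniteExtensionII}) to finite extensions $\BF_{q^n}$ and $\BF_{q^m}$, with Frobenius minimal polynomials having roots $\alpha$ and $\beta$ respectively, so that their classes are $(\alpha,n)$ and $(\beta,m)$. Base-changing both to the common field $L=\BF_{q^{nm}}$, Definition \ref{Def_FrobEndom} shows that the Frobenius $\pi_{\ul\CM_L}$ equals $(\pi_{\ul\CM})^m$ with eigenvalues in the Galois orbit of $\alpha^m$, and similarly $\pi_{\ul\CN_L}$ has eigenvalues conjugate to $\beta^n$. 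Since $\pi_{\ul\CM_L\otimes\ul\CN_L}=\pi_{\ul\CM_L}\otimes\pi_{\ul\CN_L}$, its eigenvalues are Galois conjugates of $\alpha^m\beta^n$. Decomposing $\ul\CM_L\otimes\ul\CN_L$ into simples over $L$ and then over $\ol\BF_q$, each simple summand thus carries Frobenius data equivalent to $(\alpha^m\beta^n,nm)$. By the very definition of the equivalence relation, $(\alpha^m\beta^n,nm)\sim(\alpha,n)\cdot(\beta,m)$ in $\Gamma_Q\backslash W_\ul\nu\times\BN_{\geqslant 1}/\!\!\sim$, which is precisely the compatibility required.

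Finally I would observe that the unit object $\ul{\mathds{1}}=(\CO_{C_L},\id)$ has Frobenius $\pi=1$, hence class $(1,1)$, the multiplicative identity on the right hand side. Additivity in $K_0$ and multiplicativity both being verified on the generators given by simples, the map extends uniquely to a ring homomorphism.

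The main obstacle is the multiplicativity step: tensor products of simples are not simple in general, so one must argue that every simple constituent of $\ul\CM_L\otimes\ul\CN_L$ yields the \emph{same} class $(\alpha^m\beta^n,nm)$ modulo the equivalence relation and the $\Gamma_Q$-action, rather than potentially distinct ones arising from different pairings of Galois conjugates. The cleanest way I see to handle this is to work entirely inside a single Galois orbit: pass to $\ol Q$, note that all eigenvalues of $\pi_{\ul\CM_L}\otimes\pi_{\ul\CN_L}$ lie in the $\Gamma_Q$-orbit of $\alpha^m\beta^n$, and then use Proposition \ref{PropQIsoClassesII}(iii) (applied to each simple constituent, whose minimal polynomial is irreducible) to identify their Honda-Tate classes.
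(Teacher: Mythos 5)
The additive part of your plan is sound: semi-simplicity (Theorem~\ref{ThmPoincare-Weil}) together with finiteness of $\QHom$ (Proposition~\ref{QHomhasfiniterank}) makes $K_0(\mot(\ol\BF_q))$ free on the simples, and Theorem~\ref{Thm_HT_C-Mot} matches those generators with $\Gamma_Q\backslash W_\ul\nu\times\BN_{\geqslant 1}/\!\!\sim$. You also correctly identify the multiplicativity step as the real obstacle. But the resolution you propose does not hold: the assertion that all eigenvalues of $\pi_{\ul\CM_L}\otimes\pi_{\ul\CN_L}$ lie in the $\Gamma_Q$-orbit of $\alpha^m\beta^n$ is false in general, and this is precisely the point your argument rests on.

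The eigenvalues of $\pi_{\ul\CM_L}\otimes\pi_{\ul\CN_L}$ are all products $\sigma(\alpha^m)\,\tau(\beta^n)$ with $\sigma,\tau\in\Gamma_Q$ ranging independently. When $\sigma=\tau$ the product is $\sigma(\alpha^m\beta^n)$ and lies in the orbit of $\alpha^m\beta^n$, but for $\sigma\neq\tau$ it does not in general: it is neither Galois-conjugate to $\alpha^m\beta^n$ nor related to it under $\sim$ (which, after unwinding the definition, identifies $(\gamma,nm)$ with $(\alpha^m\beta^n,nm)$ only when $\gamma/(\alpha^m\beta^n)$ is a root of unity). Concretely, if $\alpha$ has irreducible minimal polynomial of degree $\geq 3$ whose roots $\alpha_1,\alpha_2,\alpha_3,\dots$ do not pairwise differ by roots of unity (the generic situation), then already for $\ul\CN=\ul\CM$ one finds $\alpha_1^2$ and $\alpha_1\alpha_2$ among the eigenvalues of the tensor-Frobenius, and these are generically neither $\Gamma_Q$-conjugate nor $\sim$-equivalent. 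Hence $\ul\CM\otimes\ul\CN$ decomposes into simples mapping to \emph{several distinct} generators of $\BZ[\Gamma_Q\backslash W_\ul\nu\times\BN_{\geqslant 1}/\!\!\sim]$, whereas the product $(\alpha,n)\cdot(\beta,m)$ as defined is a \emph{single} generator. The two cannot coincide, so the map you describe is not a ring homomorphism for the multiplication $(\alpha,1)\cdot(\beta,1)=(\alpha\beta,1)$.

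Proposition~\ref{PropQIsoClassesII}(iii) cannot repair this: it lets you read off the Honda--Tate class of each simple constituent from its irreducible minimal polynomial, but says nothing about which irreducible factors occur in $\chi_\nu(\ul\CM\otimes\ul\CN)$ or that they all belong to a single orbit. To actually prove the corollary one needs either a different target multiplication (for instance a convolution-type product, $[\alpha]\cdot[\beta]=\sum_{\sigma,\tau}[\sigma(\alpha)\tau(\beta)]$ suitably normalized, under which the Galois-orbit mismatch disappears by design) or a genuinely different map, e.g.\ one built from the full characteristic polynomial or the zeta function of Section~\ref{Section_Zeta-Function and analogous motives} rather than from a single chosen root $\alpha$ of $\mu_\pi$.
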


\noindent
\section{The Zeta-Function}\label{Section_Zeta-Function and analogous motives}
\noindent

Recall that assigning a zeta function to a variety over a finite field $L$, factors through the Grothendieck ring
$$
K_0(Ch_{num}(k,\BQ))\to 1+t\BZ\dbl t\dbr.
$$
The zeta function satisfies sort of properties manifested in Weil conjectures. A crucial observation to prove these conjectures was to establish a cohomology theory for schemes and expressing the zeta function of $X$ in terms of the action of Frobenius on the corresponding cohomology groups.\\

Let us briefly explain the analogous picture over function fields. Let us fix a place $\nu$ away from characteristic places $\nu_i$. In contrast with the above assignment, we define the zeta function associated to a $C$-motive $\ul\CM$ in $\mot(L)$ by the following formula

$$
Z(\ul\CM,t):=\prod_i \det(1-t \pi_\nu |\Koh_{\text{\'et}}^i(\ul\CM,Q_\nu))^{(-1)^{i+1}}.
$$
\noindent
According to Proposition \ref{Prop_RationalTateisexactII}, this assignment defines a morphism $K_0(\mot (L))\to 1+Q_\nu [t]$, which can be shown that in fact factors through $Q\dbl t\dbr$ and gives  

$$
Z(-,t): K_0(\mot (L))\to 1+ t Q \dbl t\dbr,
$$
i.e. the definition is independent of the choice of the place $\nu$. We further define $Z(\ul\CM,t):=\prod_p Z(i_p^\ast \ul\CM, t)$ for a motive $\ul\CM \in \mot(S)$ over a general base scheme $S$, which is of finite type over $\BF_q$. Here the product is over all closed points $i_p: \Spec\kappa(p)\to C$. Let us explain the reason behind the fact that these definitions are independent of the chosen place $\nu$. \\
First assume that $\ul\CM$ is simple. Then $E:=\QEnd(\ul\CM)$ is a central simple algebra over the field $F:=F(\pi)$. For a semi-simple element $f\in E$ we let $\FJ$ denote the commutative subalgebra of rank $r:=\rk \ul\CM$ containing $f$. Consider the norm function $N: E\to Q$ which sends $g$ to $N_{K/Q}(\det(\alpha(g)))$, here $K$ is a splitting field for $E$ and $\alpha: E\otimes_F K \tilde{\to} Mat_{n\times n}(K)$ is an isomorphism. One can see the norm $N(f)$, as the determinant of the $Q$-endomorphism of $\FJ \otimes_F K$ given by multiplication by $f$. Note that one can identify $\FJ_\nu:= \FJ \otimes_Q Q_\nu$ with $\omega_{Q_\nu}^\nu(\ul\CM)$; see \cite[Lemma~7.2]{Bor-Har1}. Therefore $N(f)=\det(\omega^\nu(f))$. 

The above defined norm induces $N(-): \QEnd(\ul\CM)\to Q$ for semi-simple $\ul\CM$, for which the equality $N(f)=\det(\omega^\nu(f))$ holds. Now, to see that this equality holds for general element $f\in \QEnd(\ul\CM)$, we write $\omega^\nu(f)$ in Jordan normal form $S+N$ over $Q_\nu^{alg}$,  and take a power $q^N$ such that $(N)^{K^q}=0$. So $f^{q^N}$ is semi-simple and from the above arguments we see that $N(f^{q^N})=\det (\omega^\nu(f))^{q^N}$ and thus $N(f)=\det(\omega^\nu(f))$. Now for every $a\in A$ we have

$$
\chi_\nu(a)=\det(a\cdot Id - \pi_\nu)=N(a-\pi),
$$
\noindent
and thus the characteristic polynomial $\chi_\nu$ is independent of the chosen place $\nu$. This implies that the zeta function $Z(\ul\CM,t)$ lies in $Q(t)$.

\begin{remark}[zeta function of a $G$-shtuka]\label{Rem_zeta function of a $G$-shtuka}
Let $\ul\CG$ be in $\nabla_n\scrH^1(C,\FG)(L)$. Then to any representation $\rho$ we can assign the zeta function of $\rho_\ast\ul\CG\in \mot(L)$, this gives
$$
[Z(\ul\CG,t)]: R(G)\to Q(t), 
$$
which assigns a rational function to a given  class of representation in the Grothendieck ring of representation $R(G)$.

\end{remark}

\begin{remark}
Assume that $\ul\nu:=(0,\infty)$ for two specified places 0 and $\infty\in C$.  Let $\zeta$ denote the image of the uniformizar of $\CO_{C,0}$ in $L$. One say's that $\ul\CM^\flat \in \mot (\BF_q)$ is analog to $\CM \in DM_{-}^{eff}(\BF)$ if  $Z(\ul\CM,t)$ is the reduction of a lift of the zeta function $Z(\CM, t)$ to $\BZ\dbl \mathsf{y},T \dbr$, regarding the following diagram

\[
\xymatrix {
\mot (\BF)\ar[r] &K_0(\mot (\BF))\ar[r]^{~~~~~Z(-,t)}& A \dbl t\dbr&\\
& &  \BZ \dbl \mathsf{y}, t\dbr\ar[d]^{\mathsf{y}=q}\ar[u]_{\mathsf{y}=z-\zeta}&\\
DM_{-}^{eff}(\BF)\ar[r] & K_0(DM_{-}^{eff}(\BF))\ar[r]^{~~~~~Z(-,t)} \ar[ur]& \BZ\dbl t\dbr&
}
\]

\noindent
For example Carlitz module and $M(\BG_m)$ and supersingular Drinfeld module of rank 2 and ``some" elliptic curve $E$ are analog.

\end{remark}

\section{Semi-simplicity Of The Category Of C-Motives over finite fields}\label{Sect_semisimplicity}

Consider the category $\mot(\ol\BF_q)$. This  is a tannakian category with a fiber functor   
$$
\omega:\mot(\ol\BF_q) \to Q.\ol\BF_q\text{-vector spaces}.
$$
This category admits \'etale and crystalline  realizations. Note that according to Proposition \ref{Prop_GeomMixedareMixedOverAFiniteExtensionII} we may regard the tannakian category $V_\nu(Q_\nu)$ of germs
of $Q_\nu$-adic representation of $\Gal(\ol\BF_q/\BF_q)$ as the \'etale realization category

$$
\omega_{Q_\nu}^\nu(-):\mot(\ol\BF_q)\to V_\nu(Q_\nu)
$$
Recall that this category consists of  equivalence classes of continuous semisimple representations of open subgroups $U$ of $\Gal(\ol\BF_q/\BF_q)$ on the same finite dimensional $Q_\nu$-vector spaces $V$. Where we say
that $\rho_1$ and $\rho_2$ on open subgroups $U_1$ and $U_2$ are equivalent if they agree on an open subgroup of $U_1 \cap U_2$.

\begin{theorem}\label{ThmPoincare-Weil}
The category $\mot(\ol\BF_q)$ with the fiber functor $\omega$, is a semi-simple tannakian category. In particular the kernel $P:=\FP^{\Delta}$ of the corresponding motivic groupoid $\FP:=\Aut^\otimes\bigl(\omega|\mot(\ol\BF_q)\bigr)$ is a pro-reductive group. 
    
\end{theorem}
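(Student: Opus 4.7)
The plan is to reduce the theorem to proving that every object of $\mot(\ol\BF_q)$ is semi-simple, and then to obtain this semi-simplicity by descending to a finite extension and exploiting a characteristic-$p$-specific trick on the Frobenius endomorphism. The pro-reductivity of the kernel $P = \FP^\Delta$ will then follow formally from the Tannakian formalism for (not necessarily neutral) Tannakian categories, which identifies semi-simplicity of the underlying abelian category with pro-reductivity of the kernel group of the associated motivic groupoid.

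Let $\ul\CM \in \mot(\ol\BF_q)$ be of rank $r$. First I would descend $\ul\CM$ to a $C$-motive $\ul\CM_L$ over a finite extension $L/\BF_q$ of degree $e$ using Proposition \ref{Prop_GeomMixedareMixedOverAFiniteExtensionII}. Let $\pi = \pi_{\ul\CM_L}$ denote its Frobenius quasi-isogeny and set $\pi_\nu = \omega_{Q_\nu}^\nu(\pi)$, viewed as an $r \times r$ matrix over $Q_\nu$ acting on $V = \omega_{Q_\nu}^\nu(\ul\CM_L)$. The crucial observation is that $Q_\nu$ has characteristic $p$, so the additive Jordan decomposition $\pi_\nu = S + N$ over $\ol{Q}_\nu$, with $S$ diagonalizable and $N$ nilpotent commuting with $S$, satisfies $\pi_\nu^{p^m} = S^{p^m} + N^{p^m} = S^{p^m}$ as soon as $p^m$ exceeds the size of the largest Jordan block of $\pi_\nu$. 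Hence $\pi_\nu^{p^m}$ is semi-simple. Moreover, by Proposition \ref{QHomhasfiniterank}, the dimension $\dim_Q \QEnd_{L''}(\ul\CM_{L''})$ is bounded by $r^2$ as $L''$ ranges over finite extensions of $L$; since this dimension is monotone increasing under field extension, it stabilizes. Taking a suitable compositum, I can therefore pick a single finite extension $L'/L$ of degree $n$ divisible by $p^m$ such that both (i) the Frobenius of $\ul\CM_{L'}$, which equals $\pi^n$, is semi-simple (as a power of the semi-simple operator $\pi^{p^m}$ it remains diagonalizable over $\ol{Q}_\nu$); and (ii) $\QEnd_{L''}(\ul\CM_{L''}) = \QEnd_{L'}(\ul\CM_{L'})$ for every finite extension $L''/L'$.

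By Theorem \ref{Thm_SemisimplicityQEnd}, $\ul\CM_{L'}$ is semi-simple in $\mot(L')$ and decomposes as $\ul\CM_{L'} = \bigoplus_i \ul\CN_i$ with each $\ul\CN_i$ simple in $\mot(L')$ and $\QEnd_{L'}(\ul\CN_i)$ a division algebra over $Q$. Base changing along $L' \to \ol\BF_q$ yields a decomposition $\ul\CM = \bigoplus_i (\ul\CN_i)_{\ol\BF_q}$ in $\mot(\ol\BF_q)$, so it remains to verify that each summand is simple. If $(\ul\CN_i)_{\ol\BF_q}$ admitted a non-trivial subobject, then by descent this subobject would already be defined over some finite extension $L''/L'$. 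The Frobenius of $\ul\CN_{i,L''}$ is a power of that of $\ul\CN_i$, hence still semi-simple, so by Theorem \ref{Thm_SemisimplicityQEnd} the object $\ul\CN_{i,L''}$ is semi-simple in $\mot(L'')$; the subobject therefore splits off and corresponds to a non-trivial idempotent in $\QEnd_{L''}(\ul\CN_{i,L''})$. But the stabilization (ii) gives $\QEnd_{L''}(\ul\CN_{i,L''}) = e_i \QEnd_{L''}(\ul\CM_{L''}) e_i = e_i \QEnd_{L'}(\ul\CM_{L'}) e_i = \QEnd_{L'}(\ul\CN_i)$, which is a division algebra and therefore contains no non-trivial idempotents -- a contradiction.

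The main obstacle is really the characteristic-$p$ trick in the second paragraph: it is precisely the phenomenon that in positive characteristic a non-semi-simple matrix becomes semi-simple after raising it to a sufficiently large $p$-power which distinguishes this function-field setting from the analogous number-field situation, and it is what makes $\mot(\ol\BF_q)$ semi-simple even though $\mot(L)$ for finite $L$ is not. Once this is in place, the remaining steps are a careful bookkeeping of descent and of the stabilization of endomorphism algebras under the passage to larger finite extensions, feeding back into Theorem \ref{Thm_SemisimplicityQEnd} applied over each finite descent level.
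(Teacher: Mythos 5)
Your proof is correct and follows essentially the same approach as the paper's: both hinge on passing to a $p$-power extension $L'/L$ with $[L':L]\geq\rk\ul\CM$ so that, in the additive Jordan decomposition $\pi_\nu=S+N$ over $\ol{Q}_\nu$, the nilpotent part vanishes after raising to the $[L':L]$-th power and the Frobenius becomes semi-simple, after which Theorem~\ref{Thm_SemisimplicityQEnd} applies. Your stabilization-of-$\QEnd$ bookkeeping simply makes explicit the descent from semi-simplicity over $L'$ to semi-simplicity over $\ol\BF_q$, a step the paper states without justification; it is correct, though it can be shortened by noting that a power of a semi-simple operator is semi-simple, so $\ul\CM_{L''}$ is semi-simple for \emph{every} finite $L''\supset L'$, and any subobject over $\ol\BF_q$ descends to such an $L''$ where it then splits off.
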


\begin{proof}

Since a given motive $\ul \CM \in
\mot(\ol\BF_q)$ is defined by data of finite type we may suppose that $\ul \CM$ comes from a motive over
a finite extension $L/\BF_q$, which we again denote by $\ul \CM$.\\
It is enough to show that after a finite extension $L\subset L'\subset
\BF$, the image $\ul \CM'$ of $\ul \CM$ under the obvious functor $\mot(L)\to \mot(L')$ is semi-simple, or
equivalently the endomorphism algebra $E:=\QEnd(\ul \CM')$ is a semi-simple algebra over $Q$; see Theorem \ref{Thm_SemisimplicityQEnd}.\\
Let $\hat{\ul M}_\nu=(\hat{M}_\nu,\tau_\nu)$  denote the $\hat{\sigma}$-crystal associated to $\ul \CM$, for a place $\nu$ distinct from the characteristic places $\nu_i$. It is enough to show that the endomorphism algebra $E_\nu :=E\otimes Q_\nu=\QEnd(\omega^\nu(\ul\CM'))$ is semi-simple. Note that the last equality follows from Theorem \ref{Thm_Tate_Conj}. \\
We can equivalently show that $Q_\nu(\pi_\nu')$ is semi-simple, where $\pi_\nu'$ is the Frobenius endomorphism $\pi_\nu':=\omega^\nu(\pi_{\ul\CM'})$.
To show this take a representative matrix $B_{\pi_\nu}$ for $\pi_\nu\otimes 1 \in End(\hat{\ulM}_\nu\otimes_{A_\nu}
 Q_\nu^{alg})$ and write $B_{\pi_\nu}$ in
the Jordan normal form, i.e. $B=S+N$ where $S$ is semi-simple and $N$ is nilpotent and $SN=NS$.
We take $L'/L$ to be a field extension such that $[L':L]$ is a power of the characteristic of $\BF_q$ and $[L':L]\ge \rank M$. Clearly $B_{\pi_\nu}^{[L':L]}=(S+N)^{[L':L]}=S^{[L':L]}+N^{[L':L]}=S^{[L':L]}$ is semi-simple. This represents the Frobenius endomorphism $\pi'\otimes 1$ in $E_\nu\otimes Q_\nu^{alg}$. Since $Q_\nu^{alg} / Q$ is perfect we may argue by \cite[Proposition 9.2/4]{BourbakiAlgebra} that
$\pi_\nu'$ is semi-simple, and as we mentioned above, this suffices.\\
The second Part of the proposition follows from \cite[Proposition 2.23]{De-Mi}.

\end{proof}

\bigskip

%
%

{\small

}

\vfill

\begin{minipage}[t]{0.5\linewidth}
\noindent
Esmail Arasteh Rad\\
Universit\"at M\"unster\\
Mathematisches Institut \\
Einsteinstr.~62\\
D -- 48149 M\"unster
\\ Germany
\\[1mm]
\end{minipage}
\begin{minipage}[t]{0.45\linewidth}
\noindent
Urs Hartl\\
Universit\"at M\"unster\\
Mathematisches Institut \\
Einsteinstr.~62\\
D -- 48149 M\"unster
\\ Germany
\\[1mm]
\end{minipage}

\end{document}